\patchcmd{\section}{\scshape}{\bfseries}{}{}
\renewcommand{\@secnumfont}{\bfseries}
\DeclareMathOperator{\Hom}{Hom}
\DeclareMathOperator{\Spec}{Spec}
\DeclareMathOperator{\Mod}{Mod}
\newcommand{\mycirc}[1][black]{\Large\textcolor{#1}{\ensuremath\bullet}}
\newcommand{\nc}{\newcommand}   
\nc{\mc}{\mathcal}              
\nc{\on}{\operatorname}         
\nc{\wt}{\widetilde}            
\nc{\ses}[3]{{#1}\hookrightarrow{#2}\twoheadrightarrow{#3}}
\nc{\MS}{\mathbf{Mat}_{\bullet}}
\nc{\Set}{\mathcal{S}et_{\bullet}}
\nc{\E}{\mc{E}}                 
\nc{\EE}{\mathfrak{E}}          
\nc{\MM}{\mathfrak{M}}          
\nc{\FF}{\mathbb{F}}            
\nc{\GG}{\mathbb{G}}            
\nc{\C}{\mathcal{C}}            
\nc{\fl}{\mathbf{FL}}           
\newcommand{\angles}[1]{\left\langle #1 \right\rangle}
\theoremstyle{definition}
\newtheorem{mydef}{\textbf{Definition}}[section]
\newtheorem{myeg}[mydef]{\textbf{Example}}
\newtheorem{question}[mydef]{\textbf{Question}}
\newtheorem{rmk}[mydef]{\textbf{Remark}}
\theoremstyle{plain}
\newtheorem{mythm}[mydef]{\textbf{Theorem}}
\newtheorem*{nothma}{\textbf{Theorem A}}
\newtheorem*{nothmb}{\textbf{Theorem B}}
\newtheorem*{nothmc}{\textbf{Theorem C}}
\newtheorem*{nothmd}{\textbf{Theorem D}}
\newtheorem{lem}[mydef]{\textbf{Lemma}}
\newtheorem{pro}[mydef]{\textbf{Proposition}}
\newtheorem{cor}[mydef]{\textbf{Corollary}}
\begin{document}

\title{Proto-exact categories of modules over semirings and hyperrings}
\author{Jaiung Jun}
\address{Department of Mathematics, State University of New York at New Paltz, NY 12561, USA}
\email{junj@newpaltz.edu}

\author{Matt Szczesny}
\address{Department of Mathematics and Statistics, Boston University, 111 Cummington Mall, Boston, USA}
\email{szczesny@math.bu.edu}

\author{Jeffrey Tolliver}
\address{}
\email{jeff.tolli@gmail.com}
\makeatletter
\@namedef{subjclassname@2020}{%
	\textup{2020} Mathematics Subject Classification}
\makeatother

\subjclass[2020]{18D99 (primary), 05B35, 06B99, 16Y60, 16Y20  (secondary). }
\keywords{Proto-exact category, semiring, hyperring, lattice, saturated module over a semiring, algebraic lattice, geometric lattice, incidence geometry}
\thanks{}

\begin{abstract}
\emph{Proto-exact categories}, introduced by Dyckerhoff and Kapranov, are a generalization of Quillen exact categories which  provide a framework for defining algebraic K-theory and Hall algebras in a \emph{non-additive} setting. This formalism is well-suited to the study of categories whose objects have strong combinatorial flavor. 

In this paper, we show that the categories of modules over semirings and hyperrings - algebraic structures which have gained prominence in tropical geometry - carry proto-exact structures. 

In the first part, we prove that the category of modules over a semiring is equipped with a proto-exact structure; modules over an idempotent semiring have a strong connection to matroids.  We also prove that the category of algebraic lattices $\mathcal{L}$ has a proto-exact structure, and furthermore that the subcategory of $\mathcal{L}$ consisting of finite lattices is equivalent to the category of finite $\mathbb{B}$-modules as proto-exact categories, where $\mathbb{B}$ is the \emph{Boolean semifield}. We also discuss some relations between $\mathcal{L}$ and geometric lattices (simple matroids) from this perspective. 

In the second part, we prove that the category of modules over a hyperring has a proto-exact structure. In the case of finite modules over the \emph{Krasner hyperfield} $\mathbb{K}$, a well-known relation between finite $\mathbb{K}$-modules and finite incidence geometries yields a combinatorial interpretation of exact sequences. 
\end{abstract}

\maketitle


\section{Introduction}

Recent years have seen several attempts to formulate notions of algebraic geometry in characteristic one based on monoids, semirings, hyperrings, and blueprints \cite{Deitmar, con1, con2, oliver1, soule2004varietes}. One reason for this  effort is the desire to develop scheme-theoretic foundations for tropical geometry \cite{giansiracusa2016equations, lorscheid2015scheme}.  In these ``exotic" theories, just as in ``ordinary" algebraic geometry, an affine scheme corresponds one of the aforementioned algebraic structures, which we generically denote $A$ for the moment. Proceeding by analogy, one expects a quasi-coherent sheaf on $\Spec(A)$ to correspond to some appropriate notion of $A$-module, and is therefore led to study the category A-mod of A-modules and its homological properties in general.  Here, the situation is complicated by the fact that unlike the case of commutative rings, A-mod is typically not abelian or even additive, and so what should be considered an ``exact sequence" does not have an immediate answer. 

\emph{Proto-exact categories}, introduced by T.~Dyckerhoff and M.~Kapranov \cite{dyckerhoff2012higher} as a generalization of Quillen exact cateogires, provide a flexible framework for exact sequences in (potentially) non-additive categories. Roughly speaking, a proto-exact category is a pointed category with two distinguished classes of morphisms (\emph{admissible monomorphisms} and \emph{admissible epimorphisms}) satisfying certain conditions on pullback and pushout diagrams from which one can obtain a notion of \emph{admissible exact sequences}.\footnote{See Definition \ref{definition: proto_exact} for the precise definition.} Several interesting ``combinatorial'' categories are equipped with a proto-exact structure, for instance, the category of matroids \cite{eppolito2018proto}, the category of representations over a quiver (and more generally any monoid) over ``the field with one element'' \cite{szczesny2012representations}, \cite{jun2020quiver}, \cite{jun2021coefficient}. Categories with more algebro-geometric flavors, which are not additive, have been explored in \cite{szczesny2018hopf}, \cite{jun2020toric}, \cite{eberhardt2020group}. 

There are at least two ``features" associated with a proto-exact category $\mathcal{A}$:

\begin{enumerate}
\item If $\mathcal{A}$ is \emph{finitary}, in the sense that the number $\vert \on{Ext}^1(X,Y) \vert$ of inequivalent short exact sequences is finite for each pair of objects $X, Y \in \mathcal{A}$, then one may define the \emph{Hall algebra} $H_{\mathcal{A}}$. This is an associative (and in ``good" cases Hopf) algebra spanned by the isomorphism classes of objects of $\mathcal{A}$, whose structure coefficients count the number of extensions between objects. Classically -  for instance when $\mathcal{A}$ is the category of quiver representations or coherent sheaves on a curve over over a finite field \cite{ringel1990hall, kapranov1997eisenstein}, $H_{\mathcal{A}}$ yields quantum groups and related objects and is an important tool in their representations theory. When $\mathcal{A}$ is non-additive, $H_{\mathcal{A}}$ typically has a combinatorial flavor.  In this case, the Hall algebra often becomes a Hopf algebra, where the product of two objects is obtained by ``assembling'' two objects into a new object and coproduct encodes all possible ways to ``disassemble'' the given object into two objects. As an application, one may study various operations and identities for combinatorial objects from the Hall algebra perspective.\footnote{See \cite{iovanov2019hopf} for this line of ideas.}
\vspace{0.2in}

\item One can define a well-behaved version of algebraic K-theory for $\mathcal{A}$ (either through Quillen's Q-construction or Waldhausen's S-construction - see \cite{dyckerhoff2012higher, eberhardt2020group, hekkingThesis}. Even for relatively simple combinatorial categories $\mathcal{A}$ this is a rich and interesting invariant \cite{chu2012sheaves, eppolito2018proto}. 

\end{enumerate}

\medskip

The main goal of the current paper is to enlarge the catalogue of non-additive proto-exact categories by showing these include the categories of modules over \emph{semirings} as well as \emph{hyperrings}. Modules over an idempotent semiring are closely related to matroid theory \cite{giansiracusa2018grassmann} and modules over a hyperring have an interesting connection to finite incidence geometries \cite{con4}, \cite{jun2018association} and matroids \cite{baker2016matroids}. We also examine the category of algebraic lattices in relation to finite modules over $\mathbb{B}$, and discuss how the proto-exact structure of algebraic lattices is related to the proto-exact structure of the category of matroids in \cite{eppolito2018proto} via geometric lattices.
Along the way, we also investigate whether the proto-exact categories constructed are finitary, and thus whether the Hall algebra $H_{\mathcal{A}}$ is defined. \\

Let $R$ be a semiring and $M$ be an $R$-module. We define admissible monomorphisms (resp.~admissible epimorphisms) to be equalizers (resp.~coequalizers) of some morphisms and the zero map. We first prove the following. 

\begin{nothma}[Theorem \ref{theorem: main theorem for modules}]
Let $R$ be a semiring. With the admissible monomorphisms and admissible epimorphisms as above, the category $\Mod_R$ of modules over $R$ is a proto-exact category.
\end{nothma}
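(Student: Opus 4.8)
The plan is to verify the axioms in Definition~\ref{definition: proto_exact} directly, working with the explicit description of admissible monomorphisms and epimorphisms as equalizers (resp.\ coequalizers) of a morphism with the zero map. First I would record the concrete shape of these maps: an admissible monomorphism into $M$ is the inclusion of a submodule of the form $\Ker(f) = \{m \in M : f(m) = 0\}$ for some $R$-linear $f\colon M \to N$, and an admissible epimorphism out of $M$ is the quotient $M \twoheadrightarrow M/\!\!\sim_f$ by the congruence generated by $f(m) = 0$. The category $\Mod_R$ is pointed (the zero module is a zero object), so the first task is to check that each class is closed under composition and contains all isomorphisms; closure under composition should follow by exhibiting, for a composite of two kernels, a single map whose kernel it is (e.g.\ a suitable map into a product), and dually for coequalizers.

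The heart of the argument is the pullback/pushout stability axiom: (i) the pushout of an admissible mono along an arbitrary morphism exists and is again an admissible mono, with the pushout square also a pullback (``bicartesian''), and (ii) dually for admissible epis and pullbacks; and (iii) the compatibility (``exchange'') condition relating a mono followed by an epi. For (i) I would take an admissible mono $i\colon A = \Ker(f) \hookrightarrow M$ with $f\colon M \to N$, and a map $g\colon A \to B$; the pushout $M \cup_A B$ can be constructed as a quotient of $M \oplus B$, and one must check this quotient map restricted to $B$ is still a kernel of a map out of the pushout (one builds the requisite map using $f$ and the cokernel-type data of $g$). The subtlety is that in $\Mod_R$ pushouts and pullbacks are not as well-behaved as in an abelian category — one genuinely uses that $i$ is an \emph{equalizer with zero}, not an arbitrary mono, to get the square to be bicartesian. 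For (ii), given an admissible epi $p\colon M \twoheadrightarrow Q$ (coequalizer of $f, 0$) and $g\colon B \to Q$, the pullback $M \times_Q B$ is the submodule of $M \oplus B$ cut out by the equations $p(m) = g(b)$, and one checks the induced map $M \times_Q B \to B$ is the kernel-quotient associated to pulling back the defining relations of $p$ — again using the explicit congruence description.

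The main obstacle I expect is precisely verifying that these pushout (resp.\ pullback) squares are \emph{bicartesian} and that the two classes of maps are genuinely \emph{stable} under these operations: unlike the abelian setting, one cannot appeal to the snake lemma or to exactness of $\Hom$, so each stability claim must be massaged from the equalizer/coequalizer presentations, and congruences on semimodules can behave in ways that have no ring-theoretic analogue (for instance, the image of a submodule need not be a submodule of the expected shape). A useful reduction is to observe that admissible monos and epis are each determined by a kernel-type or congruence-type subobject, so the axioms become statements about how these distinguished subobjects/congruences pull back and push forward; I would organize the proof around a couple of lemmas isolating exactly that behavior, and then the remaining axioms — the existence of pullbacks of admissible epis, pushouts of admissible monos, and the exchange square — fall out by diagram chasing with elements, which is legitimate since $\Mod_R$ is a concrete category with all limits and colimits.
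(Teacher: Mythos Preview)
Your plan misidentifies the axioms to be checked. Proto-exact categories do \emph{not} require that admissible monos be stable under pushout along an \emph{arbitrary} morphism (nor the dual for epis); that is the Quillen exact setup. Axioms (4) and (5) of Definition~\ref{definition: proto_exact} only ask you to complete an ``L'' consisting of one admissible mono and one admissible epi to a bi-Cartesian square. Concretely: given $i'\in\MM$ and $j'\in\EE$ with common target, produce $i\in\MM$, $j\in\EE$ making a bi-Cartesian square; and dually from a common source. Your step ``take $g\colon A\to B$ arbitrary and form $M\cup_A B$'' is therefore aiming at a strictly stronger (and, for semiring modules, likely false) statement, and even if it succeeded it would not be the argument the theorem calls for.

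A second gap is the missing structural characterization that makes everything tractable. The paper's engine is Lemma~\ref{lemma: normal modules iff}: an admissible mono is precisely an injection whose image is a \emph{saturated} submodule, and an admissible epi is precisely a projection $M\twoheadrightarrow M/L$ with $L$ saturated. Once you have this, closure under composition follows from ``saturated inside saturated is saturated'' together with a third-isomorphism-type statement $M/\pi^{-1}(K)\cong (M/N)/K$ (Lemma~\ref{lemma: third iso theorem}); your proposed ``map into a product'' trick is not needed and would not directly handle the epi side. The paper then isolates a single bi-Cartesian lemma (Lemma~\ref{lemma: bi-Cartesian lemma}): for $L\subseteq M\subseteq N$ saturated, the square with rows $M\hookrightarrow N$, $M/L\hookrightarrow N/L$ and columns the projections is bi-Cartesian. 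Both completion axioms (4) and (5) reduce to this one lemma by taking $L=\ker j'$ (resp.\ $L=\ker j$) and setting $M=(j')^{-1}(i'(M'))$ in the pullback case. Axiom~(3) then falls out from (4), (5), and uniqueness of (co)limits. I would reorganize your proof around the saturated-submodule description and this single bi-Cartesian square; the element-level chasing you anticipate is indeed what happens, but only inside that lemma.
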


Next, we turn our attention to the category of \emph{algebraic lattices} (not necessarily finite). Recall that an algebraic lattice is a complete lattice such that any element is a join of compact elements. In particular, any finite lattice is an algebraic lattice. For algebraic lattices, roughly we define admissible monomorphisms (resp.~admissible epimorphisms) to be downward closed subsets (resp.~upward closed subsets). See Definition \ref{definition: normal maps for lattices} for the precise definition. We prove the following. 

\begin{nothmb}[Theorem \ref{theorem: main theorem for lattices}]
With the admissible monomorphisms and admissible epimorphisms as above, the category $\mathcal{L}$ of algebraic lattices is a proto-exact category.
\end{nothmb}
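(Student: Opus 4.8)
The plan is to verify directly the axioms of Definition \ref{definition: proto_exact}, working throughout with the order structure, in the same spirit as the proof of Theorem \ref{theorem: main theorem for modules}. The zero object of $\mathcal{L}$ is the one-point lattice. The first step is to put the maps of Definition \ref{definition: normal maps for lattices} into a normal form: every admissible monomorphism is, up to isomorphism, the inclusion $\iota_a\colon[0_L,a]\hookrightarrow L$ of a principal ideal, and every admissible epimorphism is, up to isomorphism, the projection $\pi_a\colon L\twoheadrightarrow[a,1_L]$, $x\mapsto x\vee a$, onto a principal filter; here $a$ is recovered from an admissible mono $f$ as $\bigvee f(M)$ and from an admissible epi $f\colon L\to N$ as $\bigvee f^{-1}(0_N)$. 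In this language the closure axioms are routine: a principal ideal of a principal ideal is one, $\pi_b\circ\pi_a=\pi_b$ for $a\le b$, and isomorphisms as well as the maps $0=[0_L,0_L]\hookrightarrow L$ and $L\twoheadrightarrow[1_L,1_L]=0$ are admissible. One also checks here that nothing escapes $\mathcal{L}$: a principal ideal $[0_L,a]$ of an algebraic lattice is algebraic, and so is a principal filter $[a,1_L]$, since $a\vee c$ is compact in $[a,1_L]$ whenever $c$ is compact in $L$, and every element of $[a,1_L]$ is a join of such.

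The backbone of the argument is the family of \emph{standard squares}: for $a\le b$ in an algebraic lattice $L$,
\[
\begin{tikzcd}
{[0_L,b]} \arrow[r, hook] \arrow[d, two heads, "\pi_a"'] & L \arrow[d, two heads, "\pi_a"] \\
{[a,b]} \arrow[r, hook] & {[a,1_L]}
\end{tikzcd}
\]
whose horizontal arrows are admissible monos ($[a,b]$ is a principal ideal of $[a,1_L]$) and whose vertical arrows are admissible epis (since $a\in[0_L,b]$). I would show this square is cartesian by the computation $\{x\in L:x\vee a\le b\}=[0_L,b]$ (using $a\le b$), and cocartesian by checking the universal property: if morphisms $u\colon L\to T$ and $v\colon[a,b]\to T$ agree on $[0_L,b]$ then $u(a)=v(a)=0_T$, so $u$ is constant on the fibres of $\pi_a$ and factors uniquely through $\pi_a$, the factorization restricting to $v$. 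Taking $a=b$ gives the kernel--cokernel pair $[0_L,a]\hookrightarrow L\twoheadrightarrow[a,1_L]$ sitting inside a bicartesian square, so every admissible mono has a cokernel and every admissible epi a kernel.

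It remains to construct pullbacks of admissible epis along admissible monos, pushouts of admissible monos along admissible epis, and to prove the ``cartesian $\Leftrightarrow$ cocartesian'' axiom. For the first, write the admissible epi as $\pi_a\colon L\twoheadrightarrow[a,1_L]$; an admissible mono into $[a,1_L]$ is, up to iso, a down-interval inclusion $[a,b]\hookrightarrow[a,1_L]$, and its pullback along $\pi_a$ is exactly the standard square, so the pullback exists, is bicartesian, and has admissible legs; the pushout statement is dual. For the last axiom I would start from an arbitrary commuting square with admissible monos horizontal and admissible epis vertical; absorbing the isomorphisms built into the normal forms, such a square is the data of elements $a',a,b\in L$ with $a'\le a$, $a'\le b$ and $z\mapsto a\vee z$ injective on $[a',b]$, with legs $[0_L,b]\hookrightarrow L$, $\pi_{a'}\colon[0_L,b]\to[a',b]$, $\pi_a\colon L\to[a,1_L]$ and $z\mapsto a\vee z\colon[a',b]\to[a,1_L]$. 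Its pullback is $[0_L,a\vee b]\hookrightarrow L$ and its pushout is $[a',1_L]$; so the square is cartesian iff $b=a\vee b$, in which case injectivity forces $a=a'$ (otherwise $a,a'\in[a',b]$ would share the image $a$), making it a standard square and hence cocartesian, and symmetrically a cocartesian such square has $a=a'$ and is standard. So cartesian $\Leftrightarrow$ cocartesian.

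The step I expect to be the main obstacle is precisely this last one --- reducing a general ``exact-shaped'' square to standard form. Since Definition \ref{definition: normal maps for lattices} determines admissible monos and epis only up to isomorphism, one must carry those isomorphisms through the square before the underlying rigidity becomes visible: a cartesian (equivalently cocartesian) square of this shape is forced to have its two deflations share a common kernel element. A secondary, milder point is keeping the constructed objects inside $\mathcal{L}$, which reduces to the compact-element argument that principal ideals and principal filters of an algebraic lattice are again algebraic.
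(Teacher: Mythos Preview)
Your proof is correct and follows essentially the same strategy as the paper: both hinge on the same family of bi-Cartesian ``standard squares'' (the paper's Lemma~\ref{lemma: bi-Cartesian key lemma}), which you phrase in interval notation $[0,b]\hookrightarrow L$, $[a,b]\hookrightarrow[a,1_L]$ while the paper writes $M\hookrightarrow N$, $M/L\hookrightarrow N/L$; your normal-form reduction of admissible maps to principal-ideal inclusions and principal-filter projections is exactly the paper's identification of admissible subobjects and quotients with down- and up-intervals. The one presentational difference is axiom~(3): the paper dispatches it in a single line from axioms~(4),~(5) and uniqueness of (co)limits, whereas you argue it directly by putting an arbitrary exact-shaped square into the normal form $(a',a,b)$ and showing that either cartesian or cocartesian forces $a=a'$ --- more explicit, but mathematically the same route.
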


Let $\mathcal{L}^c$ be the subcategory of $\mathcal{L}$ consisting of finite lattices. $\mathcal{L}^c$ is a proto-exact category with the induced proto-exact structure (from $\mathcal{L})$. Also, from Theorem $A$, one can prove that the category $\textrm{Mod}_\mathbb{B}^{\textrm{fin}}$ of finite $\mathbb{B}$-modules is proto-exact with the induced proto-exact structure (from $\Mod_\mathbb{B}$). We prove the following.

\begin{nothmc}[Corollary \ref{corollary: main theorem lattice and module}]
 $\emph{Mod}_\mathbb{B}^{\emph{fin}}$ is equivalent to $\mathcal{L}^c$ as proto-exact categories.
\end{nothmc}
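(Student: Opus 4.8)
The plan is to construct an explicit equivalence of categories $F\colon \Mod_\mathbb{B}^{\mathrm{fin}} \to \mathcal{L}^c$ and check that it sends admissible monics and epics to admissible monics and epics, and conversely. The key observation is that a $\mathbb{B}$-module $M$ is precisely a commutative idempotent monoid (a $\vee$-semilattice with bottom element $0$), since the $\mathbb{B}$-action is trivial and addition is idempotent and commutative; a finite such $M$ automatically has all meets, hence is a finite lattice. So the underlying-set functor $M \mapsto (M, \le)$, where $x \le y \iff x+y = y$, is the obvious candidate for $F$, and a morphism of $\mathbb{B}$-modules is exactly a map preserving $0$ and finite joins, i.e.\ a (bottom-and-join preserving) map of finite lattices. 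I would first record this as a lemma: $\Mod_\mathbb{B}^{\mathrm{fin}}$ and $\mathcal{L}^c$ are equivalent as ordinary categories, with $F$ as above and quasi-inverse sending a finite lattice $L$ to itself with $x + y := x \vee y$ and $0 := \hat 0_L$.

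Next I would unwind the two proto-exact structures under this equivalence. On the $\Mod_\mathbb{B}$ side, by the definitions preceding Theorem A an admissible monomorphism $N \hookrightarrow M$ is the equalizer of some morphism $f\colon M \to M'$ and the zero map, i.e.\ $N = \{m \in M : f(m) = 0\}$; since $f$ preserves joins and $0$, this kernel is a join-closed, $0$-containing, downward-closed subset — in fact one should check it is exactly a downward-closed subsemilattice, matching Definition \ref{definition: normal maps for lattices} of admissible monos for lattices (a downward closed subset). Dually, an admissible epimorphism $M \twoheadrightarrow Q$ is the coequalizer of $f$ and $0$, which on the $\mathbb{B}$-module side collapses the "image of $f$" congruence; I would identify the resulting quotient with the upward-closed-subset description of admissible epis in $\mathcal{L}^c$ by showing the coequalizer congruence classes are indexed by an up-set, the fiber over $0$ being the principal ideal generated by $\bigvee \mathrm{Img}(f)$ and the complementary classes matching the up-set structure. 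This is the heart of the argument: verifying that "kernel $=$ down-set" and "cokernel $=$ up-set" precisely on the nose, including that every down-set (resp.\ up-set) arises this way, so that the two classes of distinguished morphisms correspond under $F$.

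Finally, once the distinguished morphisms match up, the statement that $F$ is an equivalence of proto-exact categories is essentially formal: an equivalence of ordinary categories carrying admissible monos to admissible monos and admissible epis to admissible epis (in both directions) automatically preserves the bicartesian squares and the axioms in Definition \ref{definition: proto_exact}, since those are all expressed in terms of (co)limits and the two distinguished classes. I would state this last reduction as a short remark rather than belaboring it.

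The main obstacle I anticipate is the bookkeeping in the epimorphism case: coequalizers in $\Mod_\mathbb{B}$ (equivalently, quotients of join-semilattices by the congruence generated by identifying a subset with $0$) are slightly more delicate than kernels, because one must compute the congruence generated by $\{(m,0) : m \in \mathrm{Img}(f)\}$ explicitly and recognize it as the fibers of the order-retraction onto the corresponding up-set. Getting the correspondence "admissible epis $\leftrightarrow$ up-sets" exactly right — and confirming it is the same notion as in Definition \ref{definition: normal maps for lattices}, rather than merely an order-isomorphic one — is where the real work lies; everything before and after is routine once the dictionary between $\mathbb{B}$-modules and finite lattices is in place.
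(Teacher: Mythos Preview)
Your approach is correct and in fact more direct than the paper's. The paper does not use the underlying-set functor $M\mapsto(M,\le)$; instead it passes through the functor $S\colon\Mod_{\mathbb{B}}\to\mathcal{L}$ sending $M$ to its algebraic lattice $S(M)$ of saturated submodules (Proposition~\ref{proposition: B-mdoule subcategory}), shows $S$ is an equivalence and exact in general, and then restricts to finite objects. The bulk of the paper's proof of Corollary~\ref{corollary: main theorem lattice and module} is devoted to checking that the quasi-inverse $S^{-1}$ preserves admissible monos and epis, which requires tracking compact elements and unwinding what $S(f)$ being a normal mono/epi says about $f$ itself.

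Your route sidesteps all of this: in the finite case every saturated submodule is a principal down-set $\downarrow b$ (since it has a top element), so $S(M)\cong M$ and your identity-on-underlying-set functor is literally $S$ up to this canonical identification. The single computation you flag as the ``main obstacle'' --- that the congruence on $M$ generated by killing $N=\downarrow b$ has classes equal to the fibres of $x\mapsto x\vee b$, so that $M/N\cong\uparrow b$ with the projection being the join-with-$b$ map --- is exactly right and is all that is needed to match Lemma~\ref{lemma: normal modules iff} with Definition~\ref{definition: normal maps for lattices}. One small point to tighten: when you say admissible monos correspond to ``down-sets'', make sure you mean down-closed \emph{submodules} (equivalently, join-closed down-sets containing $0$), not arbitrary down-sets; this is what both Lemma~\ref{lemma: normal modules iff} and Lemma~\ref{lemma: algebraic sub-lattice} actually give. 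With that caveat, your argument goes through and is cleaner in the finite setting, though it does not yield the general exactness statement (Proposition~\ref{proposition: B-mdoule subcategory}) that the paper obtains along the way.
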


A \emph{geometric lattice} is a finite semimodular lattice in which every element is a join of atoms. Geometric lattices provide another cryptomorphic definition for (simple) matroids. In fact, the subcategory $\mathcal{G}$ of $\mathcal{L}$ consisting of geometric lattices is a proto-exact subcategory of $\mathcal{L}$ (Proposition \ref{proposition: matroid geometric lattices}). Note that the category $\mathcal{G}$ contains ``more morphisms'' than the category of matroids with strong maps as in \cite{eppolito2018proto}. In particular, the proto-exact structure on $\mathcal{G}$ is different from the proto-exact structure for the category of matroids studied in \cite{eppolito2018proto}. For more details, see Section \ref{subsection: semiring further directions}. \\

Next, we move to \emph{hyperrings}. A morphism of hyperrings preserves multi-valued addition in a ``weak sense'' (see Definition \ref{definition: hypergroup and morphisms}). We define admissible monomorphisms (resp.~admissible epimorphisms) to be injective (resp.~surjective) morphisms which preserve multi-valued addition in a ``strong sense''. Then, we prove the following. 

\begin{nothmd}[Theorem \ref{theorem: main theorem for hyperrngs}]
Let $H$ be a hyperring. With the admissible monomorphisms and admissible epimorphisms as above, the category $\Mod_H$ of modules over $H$ is a proto-exact category.	
\end{nothmd}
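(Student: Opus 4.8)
The plan is to verify the axioms of Definition~\ref{definition: proto_exact} directly, in parallel with the proof of Theorem~\ref{theorem: main theorem for modules} but using the hypergroup structure of an $H$-module in place of the additive structure of a module over a semiring. Write $\MM$ (resp.\ $\EE$) for the class of injective (resp.\ surjective) strong morphisms. The formal parts go quickly: the zero module is a zero object; $\MM$ and $\EE$ are visibly closed under composition; and the canonical maps $0\to M$ and $M\to 0$ are strong (both sides of the defining identity reduce to $\{0\}$) and are respectively injective and surjective. For isomorphisms I would observe that if $f\colon M\to N$ is a bijective strong morphism with set-theoretic inverse $g$, then $f\big(g(u)+g(v)\big)=f(g(u))+f(g(v))=u+v$, and injectivity of $f$ forces $g(u+v)=g(u)+g(v)$ (and likewise $g(ru)=rg(u)$); hence $g$ is again a morphism, $f$ is an isomorphism, and every isomorphism lies in $\MM\cap\EE$.

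The substance is a small ``homological toolkit'', each item of which has the same shape as over a ring but requires the hypergroup axioms. First, for any $H$-module one has $M+M=M$ (since $x\in x+0$ while each $x+y\subseteq M$); using this together with strongness, for a strong morphism $f\colon M\to N$ the image $f(M)$ is a submodule of $N$ and the preimage $f^{-1}(L)$ of any submodule $L\le M$ is a submodule of $M$. Second, for a submodule $L\le M$ the quotient $M/L$---cosets $x+L$, with $(x+L)\oplus(y+L)=\{\,z+L: z\in x+y\,\}$ and $r(x+L)=rx+L$---is a well-defined $H$-module, and $\pi\colon M\to M/L$ is surjective, strong, and has the universal property that a morphism out of $M$ vanishing on $L$ factors uniquely through $\pi$; well-definedness, and crucially the equality $\pi(x+y)=\pi(x)+\pi(y)$ rather than mere containment, use commutativity, closure of $L$, and the reversibility axiom of the hypergroup. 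These give $\Ker\pi=L$ and the isomorphism theorem: a surjective strong map $p\colon M\to N$ factors as $M\xrightarrow{\ \pi\ }M/\Ker p\xrightarrow{\ \sim\ }N$, the second arrow an isomorphism of $H$-modules.

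With the toolkit in hand I would verify the two ``completion'' axioms by explicit construction. Given $i\colon A\hookrightarrow B$ in $\MM$ and $q\colon A\twoheadrightarrow C$ in $\EE$, set $D:=B/i(\Ker q)$; identifying $C\cong A/\Ker q$, the map $i$ induces $\bar\imath\colon C\to D$, strong by the quotient computation and injective since $i$ is injective with $i^{-1}(i(\Ker q))=\Ker q$, while $\pi\colon B\to D$ is surjective and strong. The square with edges $i,q,\bar\imath,\pi$ is then a pushout (pushout along a quotient is a quotient: morphisms out of $D$ are exactly morphisms out of $B$ vanishing on $i(\Ker q)$) and a pullback ($B\times_D C=\{\,b:\pi(b)\in\bar\imath(C)\,\}=i(A)+i(\Ker q)=i(A)\cong A$). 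Dually, given $j\colon C\hookrightarrow D$ in $\MM$ and $p\colon B\twoheadrightarrow D$ in $\EE$, set $A:=p^{-1}(j(C))\le B$; then $A\hookrightarrow B\xrightarrow{\,p\,}D$ factors through $j$, and $j^{-1}\circ p\colon A\to C$ is surjective (since $p$ is) and strong (since $p,j$ are strong and $j$ is injective). One checks the resulting square is a pullback by construction, and a pushout because $\Ker(j^{-1}p)=\Ker p$, so that the pushout of $C\leftarrow A\hookrightarrow B$ is $B/\Ker p\cong D$ compatibly with the edges.

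Finally, for the axiom that a square with horizontal edges in $\MM$ and vertical edges in $\EE$ is cartesian iff cocartesian, take such a square with edges $i\colon A\hookrightarrow B$, $j\colon C\hookrightarrow D$, $q\colon A\twoheadrightarrow C$, $p\colon B\twoheadrightarrow D$. If it is cartesian then $p^{-1}(j(C))=i(A)$, which forces $\Ker p=i(\Ker q)$ and $D\cong B/i(\Ker q)$ compatibly with the edges, hence by the previous paragraph it is cocartesian; conversely if it is cocartesian then $D\cong B/i(\Ker q)$, so $\Ker p=i(\Ker q)$ and $p^{-1}(j(C))=i(A)$, hence it is cartesian. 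I expect the real obstacle to lie not in any single step but in the hypergroup bookkeeping that pervades all of them: unlike in the ring or semiring case, at each stage one must check that the subsets involved (images, preimages, cosets, fibre products) are genuinely submodules and that the induced structure maps are \emph{strong}, not merely morphisms, and it is precisely reversibility that upgrades the defining inclusions $f(x+y)\subseteq f(x)+f(y)$ to equalities where that is needed. Once the toolkit of the second paragraph is secured, the remaining arguments are formal and follow the pattern of Theorem~\ref{theorem: main theorem for modules}.
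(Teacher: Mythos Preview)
Your proposal is correct and follows essentially the same approach as the paper: both build the same toolkit (quotients by submodules are $H$-modules with strict projection, the first isomorphism theorem for strict surjections, preimages of submodules are submodules) and use it to verify axioms (4) and (5) of Definition~\ref{definition: proto_exact} by explicitly constructing the bi-Cartesian completions as $B/i(\Ker q)$ and $p^{-1}(j(C))$. The only organizational difference is that the paper deduces axiom (3) formally from axioms (4), (5), and uniqueness of pullbacks and pushouts, whereas you verify it by a direct computation identifying $\Ker p = i(\Ker q)$; both arguments are valid.
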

Finally, we recall a well-known equivalence between finite modules over the Krasner hyperfield $\mathbb{K}$ (Example \ref{example: Krasner hyperfield}) and finite incidence geometries which could potentially allow us to describe the structure constants for the Hall algebra associated $\Mod_{\mathbb{K}}$ by using finite incidence geometries. This is the topic of a future paper. 
\bigskip

\noindent \textbf{Acknowledgments}  The authors would like to thank Chris Eppolito for his helpful comments on the first draft of the paper.


\section{Preliminaries} \label{section: preliminaries}

\subsection{Proto-exact categories}\label{subsection: proto-exact categories}

In \cite{ringel1990hall}, Ringel defined the Hall algebra of the category $\textrm{Rep}(Q,\mathbb{F}_q)$ of representations of a quiver $Q$ over a finite field $\mathbb{F}_q$, and proved that in the case of a simply laced Dynkin quiver $Q$, the associated Hall algebra is the upper triangular part of the quantum group which is classified by the same Dynkin diagram. Later in \cite{kapranov1997eisenstein}, Kapranov investigated $\textrm{Coh}(X)$, the category of coherent sheaves on a smooth projective curve $X$ over a finite field $\mathbb{F}_q$. and proved when $X$ is a projective line, some subalgebra of $H_{\textrm{Coh}(X)}$ is isomorphic to a positive part of the quantum affine algebra $U_q(\widehat{\mathfrak{sl}_2})$. 

Notice that both $\textrm{Rep}(Q,\mathbb{F}_q)$ and $\textrm{Coh}(X)$ are examples of a finitary abelian category, and one can indeed associate a Hall algebra $H_\mathcal{A}$ to \emph{any} finitary abelian category as follows: the underlying vector space of $H_\mathcal{A}$ is spanned by the set $\textrm{Iso}(\mathcal{A})$ of isomorphism classes of $\mathcal{A}$. For each $A, B \in \textrm{Iso}(\mathcal{A})$, multiplication is defined as follows:
\begin{equation}\label{eq: 1}
	A\cdot B := \sum_{C \in \textrm{Iso}(\mathcal{A})}\textbf{a}^C_{A,B}C,
\end{equation}
where 
\begin{equation}\label{eq: 2}
	\textbf{a}^C_{A,B}=\#\{D \subseteq C \mid D\simeq B\textrm{ and } C/D \simeq A\}.
\end{equation}
Then one linearly extends \eqref{eq: 1} to $H_\mathcal{A}$. If $\mathcal{A}$ satisfies some extra conditions, it becomes a Hopf algebra. 

As a first step into a non-additive setting, one may notice from \eqref{eq: 1} and \eqref{eq: 2} that to define the Hall algebra $H_\mathcal{A}$ of an abelian category, one does not need the full strength of the abelian categories; one only needs a notion of ``exact sequences'' (or proper definitions of ``subobjects'' and ``quotient objects''). For instance, Quillen exact categories would suffice to define the algebraic structures as in \eqref{eq: 1} and \eqref{eq: 2}. 

In \cite{dyckerhoff2012higher}, Dyckerhoff and Kapranov introduced the notions of proto-exact and proto-abelian categories which allow one to construct the Hall algebra in a more general setting beyond abelian categories. In particular, this allows one to define and study Hall algebras for a non-additive category in a systemic way. In the following, we recall basic definitions for proto-exact categories which will be used in sequel. 

\begin{mydef}\label{definition: proto_exact}
A proto-exact category is a pointed category $\C$ equipped with two distinguished classes of morphisms $\MM$ (admissible monomorphisms) and $\mathfrak{E}$ (admissible epimorphisms) which satisfy the following conditions.
\begin{enumerate}
\item 
For each object $A \in \C$, a morphism $0 \rightarrow A$ is in $\MM$, and a morphism $A \rightarrow 0$ is in $\EE$.
\item 
$\MM$ and $\EE$ contain all isomorphisms and are closed under composition.
\item 
A commutative square as in \eqref{eq: bi-cat}, with
		$i,i' \in \MM$ and $j, j' \in \EE$, is Cartesian (pullback) if and only if it is co-Cartesian (pushout).
\begin{equation}\label{eq: bi-cat}
\begin{tikzcd}
A
\ar[r,"i",hook]
\ar[d,"j",two heads,swap]
&
B
\ar[d,"j'",two heads]
\\
A'
\ar[r,"i'",hook]
&
B'
\end{tikzcd}
\end{equation}

\item 
A diagram as in \eqref{eq: pullback1}, with $i' \in \MM$ and $j' \in \EE$, can be completed to a
bi-Cartesian square \eqref{eq: bi-cat} with $i \in \MM$ and
$j \in \EE$.
\begin{equation}\label{eq: pullback1}
\begin{tikzcd}
	{}
	&
	B
	\ar[d,"j'",two heads]
	\\
	A'
	\ar[r,"i'",hook]
	&
	B'
\end{tikzcd}
\end{equation}
\item 
A diagram as in \eqref{eq: com2}, with $i \in \MM$ and $j \in \EE$, can be completed to a bi-Cartesian
square \eqref{eq: bi-cat} with $i' \in \MM$ and $j' \in \EE$.
\begin{equation}\label{eq: com2}
\begin{tikzcd}
	A
	\ar[r,"i",hook]
	\ar[d,"j",two heads,swap]
	&
	B
	\\
	A'
	&
	{}
\end{tikzcd}
\end{equation}
\end{enumerate}
\end{mydef}

Let $\mathcal{C}$ be a proto-exact category. By an \emph{admissible short exact sequence} in $\mathcal{C}$, we mean a bi-Cartesian square as in \eqref{eq: ses proto-exact} with $i \in \MM$ and $j \in \EE$.
\begin{equation}\label{eq: ses proto-exact}
	\begin{tikzcd}
		A
		\ar[r,"i",hook]
		\ar[d,two heads]
		&
		B
		\ar[d,"j",two heads]
		\\
		0
		\ar[r,hook]
		&
		C
	\end{tikzcd}
\end{equation}
We sometimes call the diagram \eqref{eq: ses proto-exact} an \emph{admissible extension of $C$ by $A$}, and it will also be denoted by
\begin{equation}\label{ses}
\ses{A}{B}{C}.
\end{equation}
When there is no potential confusion, we will simply call \eqref{ses} a short exact sequence in $\C$. In \eqref{ses} we denote the object $C$, which is unique up to a unique isomorphism, by $B/A$.

\begin{mydef}
Let $\C$ and $\mathcal{D}$ be proto-exact categories. 
\begin{enumerate}
	\item
A functor $F \colon \C \mapsto \mathcal{D}$ is said to be \emph{exact} if $F$ preserves admissible short exact
sequences.
\item 
In $\C$, two extensions $\ses{A}{B}{C}$ and $\ses{A}{B'}{C'}$ of
$C$ by $A$ are \emph{equivalent} if there is a commutative diagram
\begin{equation}
\begin{tikzcd}
	A
	\ar[r,hook]
	\ar["\on{id}",d,swap]
	&
	B
	\ar[r,two heads]
	\ar["\cong",d]
	&
	C
	\ar["\on{id}",d]
	\\
	A
	\ar[r,hook]
	&
	B'
	\ar[r,two heads]
	&
	C
\end{tikzcd}
\end{equation}
The set of equivalence classes of such sequences is denoted
$\on{Ext}_{\C}(C,A)$.  
\end{enumerate}
\end{mydef}	
	
The following is the finiteness condition that we need to ensure the existence of the Hall algebra of a proto-exact category.

\begin{mydef}\label{defi:finitary}
A proto-exact category $\C$ is {\em finitary} if, for every pair of objects $A$ and $B$, the sets $\on{Hom}_{\C}(A,B)$ and $\on{Ext}_{\C}(A,B)$ are finite sets.
\end{mydef}

\begin{myeg}\label{example: proto-exact}
The following are typical examples of proto-exact categories.
\begin{enumerate}
\item 
Any Quillen exact category is proto-exact with the same exact structure. 
\item 
A simple example of proto-exact categories (which is not an additive category) is the category $\textrm{\textbf{Set}}_\bullet$ of pointed sets; $\MM$ consists of all pointed injections, and $\EE$ consists of all pointed surjections
\[
f: (A,*_A) \rightarrow (B,*_B)
\]
such that $f \vert_{A \backslash f^{-1}(*_B)}$ is injective.  The full subcategory $\textrm{\textbf{Set}}_\bullet^{\textrm{fin}}$ of finite pointed sets is finitary. See \cite[Section 1.3]{dyckerhoff2018higher} for the Hall algebra of $\textrm{\textbf{Set}}_\bullet^{\textrm{fin}}$ in perspective of algebraic geometry over ``the field with one element'' along with other interesting examples. Also, the first and the second authors further explored certain categories of coherent sheaves arising from algebraic geometry over monoids in \cite{jun2020toric}.
\item 
In \cite{eppolito2018proto}, together with C.~Eppolito, the first and the second authors proved that the category $\textbf{Mat}_\bullet$ of pointed matroids is equipped with a finitary proto-exact structure under which the Hall algebra is isomorphic to the dual of a matroid-minor Hopf algebra. 
\end{enumerate}
\end{myeg}

Now, one can define the Hall algebra $H_\mathcal{C}$ of a finitary proto-exact category $\mathcal{C}$ as in the case of abelian categories; it has the underlying set as a vector space spanned by the set $\textrm{Iso}(\mathcal{C})$ of isomorphism classes of $\mathcal{C}$. For each $M,N \in \textrm{Iso}(\mathcal{C})$, multiplication is as in \eqref{eq: 1}, where $D \subseteq C$ is a subobject of $C$ and $C/D$ is the object (unique up to a unique isomorphism) such that the following is a short exact sequence in $\mathcal{C}$:
\begin{equation}
\ses{D}{C}{C/D}.
\end{equation}
As in the classical case, when $\mathcal{C}$ satisfies further conditions, $H_{\mathcal{C}}$ becomes a Hopf algebra. For instance, this is the case of matroids, finite pointed sets, and representations of a quiver over ``the field with one element''. See \cite{eppolito2018proto}, \cite{szczesny2012representations}, \cite{jun2020quiver}, \cite{jun2021coefficient}, \cite{szczesny2018hopf}.

\subsection{Semirings and hyperrings} \label{subsection: semirings and hyperrings}

\subsubsection{\textbf{Semirings}}

By a \emph{semiring}, we mean an nonempty set $R$ with two binary operations $+,\cdot$ such that $(R,+,1_R)$ and $(R,\cdot,0_R)$ are commutative monoids and two binary operations are compatible in the sense that $(a+b)\cdot c = a\cdot c + a\cdot b$ for any $a,b,c \in R$. When $(R\setminus \{0_R\},\cdot)$ is a group, $R$ is said to be a \emph{semifield}. By an $R$-module for a semiring $R$, we mean a monoid $M$ with the usual axioms. 

\begin{mydef}\label{definition: ideals for semirings}
Let $R$ be a semiring. 
\begin{enumerate}
\item 
Let $M$ be an $R$-module. A submodule $N$ is said to be \emph{saturated} if $x, x+y \in N$ implies $y \in N$ for all $x,y \in M$. 
\item 
A semiring $R$ is \emph{additively idempotent} if $a+a = a$ for all $a\in R$.	
	\end{enumerate}
\end{mydef}

\begin{myeg}\label{example: Boolean semifield}
Let $\mathbb{B}=\{0,1\}$. With the following addition and multiplication, $\mathbb{B}$ is a semifield.
\[
0\cdot 0=0, \quad 1\cdot 1=1, \quad 0\cdot 1=0, \quad 0+0=0, \quad 1+0=1, \quad 1+1=1. 
\]
$\mathbb{B}$ is said to be the \emph{Boolean semifield}. 
\end{myeg}

\begin{myeg}\label{example: tropical semifield}
Let $\mathbb{R}_{\max}=\mathbb{R}\cup \{-\infty\}$ be the set of real numbers together with the symbol $-\infty$. With the following addition $\oplus$ and multiplication $\odot$, $\mathbb{R}_{\max}$ is a semifield. 
\[
a\oplus b= \max\{a,b\}, \quad a\odot b=a+b,
\]
where $+$ is the usual addition of real numbers. $\mathbb{R}_{\max}$ is said to be the \emph{tropical semifield} and plays an important role in tropical geometry; it plays the role that is classically played by the complex numbers. 
\end{myeg}

The following example illustrates that idempotent semirings are neither restrictive nor exotic. 

\begin{myeg}
Let $K$ be a valued field whose value group is $\mathbb{R}$,  $\mathcal{O}_K$ the ring of integers, and $A$ a $K$-algebra. Let $\mathbb{S}_{\mathrm{fg}}(A)$ be the set of $\mathcal{O}_K$-submodules of $A$. Then $\mathbb{S}_{\mathrm{fg}}(A)$ is equipped with a natural idempotent semiring structure: for $N_1=\angles{S_1}, N_2=\angles{S_2} \in \mathbb{S}_{\mathrm{fg}}(A)$, we define addition and multiplication as follows:
\[
N_1+N_2:=\angles{S_1\cup S_2}, \quad N_1N_2:=\angles{S_1S_2}. 
\]
For more details in relation to tropical geometry, we refer the readers to \cite{jun2020vector}. This correspondence was also employed in \cite{jun2020lattices} to enrich Hochster's theorem on \emph{spectral spaces} in \cite{hochster1969prime} by showing that a topological space $X$ is spectral if and only if $X$ is the ``saturated prime spectrum'' of an idempotent semiring. In fact, in \cite{jun2020lattices}, with S.~Ray, the first and the third authors proved that the category of spectral spaces is equivalent to a certain subcategory of the category of idempotent semirings.
\end{myeg}

\begin{rmk}
In this paper, by an idempotent semiring we always mean an additively idempotent semiring. Equivalently, an idempotent semiring is an $\mathbb{B}$-algebra, where $\mathbb{B}$ is the Boolean semifield.
\end{rmk}

Let $M$ be an $R$-module and $N$ be a submodule of $M$. $N$ defines a congruence relation on $M$ as follows: for $x,y \in M$
\begin{equation}\label{eq: quo}
x \sim y \iff x+n=y+n' \textrm{ for some } n,n' \in N.
\end{equation}
The quotient $M/N$ is defined to be the set of equivalence classes under \eqref{eq: quo} with the induced operations. One can easily check that $M/N$ is an $R$-module.

\subsubsection{\textbf{Hyperrings}} Next, we recall the definition of hyperrings. Let $M$ be a nonempty set and $\mathcal{P}^*(M)$ be the set of all nonempty subsets of $M$. By a \emph{hyperaddition}, we mean a binary function $\phi:M \times M \to \mathcal{P}^*(M)$ such that $\phi(a,b)=\phi(b,a)$. We simply denote $\phi(a,b)$ by $a+b$ or $a+_Mb$. If $A,B \subseteq M$, we use the following notation:
\begin{equation}\label{eq: hypersum of sets}
A+B:=\bigcup_{a \in A,b\in B}(a+b).
\end{equation}

In what follows, we will identify an element $a$ and the singleton $\{a\}$.

\begin{mydef}\label{definition: hypergroup and morphisms}
A \emph{hypergroup} is a nonempty set $M$ with a hyperaddition satisfying the following:
\begin{enumerate}
\item 
For any $a, b, c \in M$, we have $(a+b)+c=a+(b+c)$.
	\item
There exists a unique element $0 \in M$ such that $a+0=a$ for all $a \in M$. 
	\item
For each $a \in M$, there exists a unique element $b$, denoted by $-a$, such that $0 \in a+b$.
	\item
For any $a,b,c \in M$, if $a \in b+c$, then $c \in a+(-b)$. 
\end{enumerate}
A morphism $f:M_1 \to M_2$ between hypergroups is a function such that $f(0)=0$ and for all $a,b \in M_1$,
\begin{equation}\label{eq:123}
f(a+b) \subseteq f(a)+f(b).
\end{equation}
A morphism $f$ is said to be \emph{strict} if \eqref{eq:123} is an equality. 
\end{mydef}

\begin{mydef}
By a \emph{hyperring}, we mean a hypergroup $H$ equipped with (usual) multiplication $\cdot$ satisfying the following two conditions:
\begin{enumerate}
	\item 
$(H,\cdot,1_H)$ is a commutative monoid, 
\item 
Hyperaddition and multiplication are compatible in the sense that $a\cdot(b+c)=a\cdot b +a\cdot c$.
\end{enumerate} 
If $(H\setminus \{0_H\},\cdot)$ is a group, then $H$ is said to be a hyperfield. A morphism $f:H_1 \to H_2$ between hyperrings is a morphism of hypergroups which also preserves multiplication. A morphism is said to be \emph{strict} if it is strict as a morphism of hypergroups. 
\end{mydef}

\begin{mydef}\label{definition: module over a hyperring}
Let $H$ be a hyperring. By an $H$-module, we mean a hypergroup $M$ together with a map $\phi:H \times M \to M$ such that (we denote $\phi(r,m)$ by $rm$.)
\begin{enumerate}
\item 
$1m=m$ for all $m \in M$, $0m=0$ for all $m \in M$, 
\item 
$(xy)m=x(ym)$ for all $x,y \in H$ and $m \in M$ and 
\item 
$x(m_1+m_2)=xm_1+xm_2$ for all $x \in H$ and $m_i \in M$, 
\item 
$(x+y)m = xm+ym$ for all $x,y \in H$ and $m \in M$.
\end{enumerate}
By a morphism of $H$-modules, we mean a morphism $f:M_1 \to M_2$ of hypergroups such that $f(rm)=rf(m)$ for all $r \in H$ and $m \in M_1$.
\end{mydef}

\begin{myeg}\label{example: Krasner hyperfield}
Let $\mathbb{K}=\{0,1\}$. With the following hyperaddition and multiplication $\mathbb{K}$ becomes a hyperfield. 
\[
1\cdot 1=1, \quad 1\cdot 0 =0, \quad 0\cdot 0=0, \quad 0+0=0 \quad 1+0=1, \quad 1+1=\mathbb{K}. 
\]
$\mathbb{K}$ is said to be the \emph{Krasner hyperfield}.
\end{myeg}

\begin{myeg}
Let $\mathbb{S}=\{-1,0,1\}$. With the following hyperaddition and multiplication $\mathbb{S}$ becomes a hyperfield. 
\[
1\cdot 1=(-1)\cdot(-1)=1, \quad 1\cdot (-1) =-1, \quad 0\cdot (-1)=0\cdot 0 = 0 \cdot 1=0, 
\]
\[
0+0=0, \quad 1+1=1+0=1, \quad (-1)+(-1)=(-1)+0=-1,\quad 1+(-1)=\mathbb{S}. 
\]
$\mathbb{S}$ is said to be the \emph{sign hyperfield}.
\end{myeg}

\begin{myeg}
Let $\mathbb{T}=\mathbb{R}\cup\{-\infty\}$, the set of real numbers together with the symbol $-\infty$. With the following hyperaddition $\oplus$ and multiplication $\odot$, $\mathbb{T}$ becomes a hyperfield.
\[
a\odot b = a+b, \quad a\oplus b= \begin{cases}
	\max\{a,b\} \textrm{ if $a \neq b$, }\\
	[-\infty,a] \textrm{ if $a=b$,  }
\end{cases}
\]
where $+$ is the usual addition of real numbers and $[-\infty,a]=\{x \in \mathbb{T} \mid x \leq a\}$. $\mathbb{T}$ is said to be the \emph{tropical hyperfield}. 
\end{myeg}

The following example shows that although hyperrings look exotic one can easily construct them from a commutative ring $A$ and a multiplicative subgroup $G$ of $A$. 

\begin{myeg}
Let $A$ be a commutative ring and $G$ be a subgroup of the group of multiplicative units of $A$. Then $G$ naturally acts on $A$ by multiplication. Let $A/G$ be the set of equivalence classes. We let $[x]$ be the equivalence class of $x \in A$. Then, $A/G$ becomes a hyperring with the following hyperaddition and multiplication: for any $[a], [b] \in A/G$, 
\[
[a]\cdot[b]=[ab], \quad [a]+[b]=\{[c] \mid c=g_1a+g_2b\textrm{ for some } g_1,g_2 \in G\}.
\]
A hyperring of the form $A/G$ is said to be a \emph{quotient hyperring}. 
\end{myeg}





\begin{mydef}\label{definition: submodule}
Let $H$ be a hyperring and $M$ be an $H$-module. By a submodule $N$ of $M$, we mean a subset of $M$ which is itself an $H$-module with the induced hyperaddition and scalar multiplication.
\end{mydef}

\begin{rmk}
We remark that in Definition \ref{definition: submodule}, for any $a,b \in N$ the hyperaddition $a+_Nb$ taken in $N$ is a subset of the hyperaddition $a+_Mb$ taken in $M$, and they do not have to be same. For example, the tropical hyperfield $\mathbb{T}$ is an $\mathbb{K}$-module, where $\mathbb{K}$ is the Krasner hyperfield. In this case $\mathbb{K}$ is a submodule of $\mathbb{T}$, but, $1+_{\mathbb{K}}1=\{0,1\}$ whereas $1+_{\mathbb{T}}1=[-\infty,1]$. 
\end{rmk}


\section{The category of modules over a semiring as a proto-exact category} \label{section: proto-exact for semiring}

In this section, we prove that the category of modules over a semiring is a proto-exact category. We define admissible monomorphisms (resp.~admissible epimorphisms) to be equalizers (resp.~coequalizers) in certain forms. Then, we give an equivalent description of admissible monomorphisms and admissible epimorphisms in terms of saturated submodules. 

Let $M$ be a module over a semiring $R$. Recall that a submodule $N$ of $M$ is said to be saturated if $x+y \in N$ and $y \in N$ implies that $x \in N$ for any $x, y \in M$. It is clear that the intersection of a family of saturated submodules of $M$ is again saturated. In particular, for each subset $S$ of $M$, there exists a smallest saturated submodule of $M$ containing $S$. We call this the saturation closure (or simply the saturation) of $S$, denoted by $\overline{S}$ or $\angles{S}$. The following is proved in \cite{jun2020lattices} for ideals, but the same proof works for submodules. We include the proof for completeness. 

\begin{pro}\label{proposition: saturation closure}
Let $R$ be a semiring. Let $M$ be an $R$-module and $N$ be a submodule of $M$. Then the smallest saturated submodule of $M$ containing $N$ is $\{x \in M \mid x+a=b \textrm{ for some }a,b \in N\}$.
\end{pro}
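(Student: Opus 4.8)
The plan is to exhibit the set
\[
T := \{x \in M \mid x + a = b \text{ for some } a, b \in N\}
\]
explicitly as the smallest saturated submodule containing $N$. Since $M$ is itself (vacuously) saturated, the family of saturated submodules of $M$ containing $N$ is nonempty, and it is closed under intersection; so the smallest such submodule exists, and it suffices to show (i) $N \subseteq T$, (ii) $T$ is a submodule, (iii) $T$ is saturated, and (iv) $T \subseteq N'$ for every saturated submodule $N' \supseteq N$. Parts (i) and (iv) are immediate: for $n \in N$ we have $n + 0 = n$ with $0, n \in N$, giving $N \subseteq T$; and if $N'$ is saturated with $N \subseteq N'$ and $x \in T$, pick $a,b \in N \subseteq N'$ with $x + a = b$, so $x + a = b \in N'$ and $a \in N'$, whence $x \in N'$ by saturation of $N'$.

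For (ii), note $0 \in T$ (take $a = b = 0$); if $x, x' \in T$ with $x + a = b$ and $x' + a' = b'$ for $a,a',b,b' \in N$, then $(x + x') + (a + a') = b + b'$ with $a + a', b + b' \in N$, so $x + x' \in T$; and if $x \in T$ with $x + a = b$ and $r \in R$, then $rx + ra = rb$ with $ra, rb \in N$ by distributivity and closure of $N$ under scalar multiplication, so $rx \in T$. The one step that genuinely uses a manipulation — and the only place where the "saturated" conclusion is really exercised — is (iii). Suppose $y \in T$ and $x + y \in T$; choose $c, d \in N$ with $y + c = d$ and $a, b \in N$ with $(x + y) + a = b$. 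Then, by commutativity and associativity of $+$ in $M$,
\[
x + (a + d) = x + a + (y + c) = (x + y + a) + c = b + c,
\]
and $a + d, b + c \in N$ since $N$ is a submodule; hence $x \in T$, so $T$ is saturated.

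I do not expect a real obstacle: the whole argument is bookkeeping with the defining identities for membership in $T$ together with the closure properties of $N$. The step to be careful with is the saturation check (iii), since it is the only one that requires combining the two separate witnessing equations $y + c = d$ and $(x+y)+a = b$ into a single identity certifying $x \in T$; once that rearrangement is written down correctly, the proof is complete.
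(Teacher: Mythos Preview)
Your proof is correct and follows essentially the same approach as the paper: both verify that $T$ is a submodule containing $N$, check saturation by combining the two witnessing equations via the same rearrangement (your $x+(a+d)=b+c$ is exactly the paper's $x+a+b'=b+a'$ with relabeled variables), and observe that any saturated submodule containing $N$ must contain $T$. Your write-up is simply more detailed where the paper says ``it is clear.''
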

\begin{proof}
Let $L=\{x \in M \mid x+a=b \textrm{ for some }a,b \in N\}$. It is clear that $L$ is a submodule of $M$ containing $N$. If $x+y, y \in L$, then we have $x+y+a=b$ and $y+a'=b'$ for some $a,a',b,b' \in N$. Then, we have $x+a+(y+a')=x+a+b'$ and $(x+y+a)+a'=b+a'$. It follows that $x+a+b'=b+a'$, showing that $x \in L$. In particular, $L$ is saturated. Finally it is clear that any saturated submodule of $M$ containing $N$ should contain $L$.   
\end{proof}

When $R$ is an idempotent semiring, one has the following equivalent description of saturated submodules. Recall that any module $M$ over an idempotent semiring $R$ is equipped with a canonical partial order as follows: for any $x,y \in M$
\[
x \leq y \iff x+y=y.
\]

\begin{pro}(c.f.~\cite{jun2020lattices})\label{proposition: saturated equiv}
Let $R$ be an idempotent semiring. Let $M$ be an $R$-module and $N$ be a submodule of $M$. Then $N$ is saturated if and only if for all $x \in N$ and $y\leq x$ it follows that $y \in N$. 
\end{pro}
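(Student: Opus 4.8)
The plan is to prove the two implications separately, using the explicit description of the saturation closure from Proposition \ref{proposition: saturation closure} together with the characterization of the canonical partial order $y \leq x \iff x + y = x$ on an $R$-module over an idempotent semiring.

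First I would show that if $N$ is saturated, then $y \leq x$ and $x \in N$ force $y \in N$. Indeed, $y \leq x$ means $x + y = x$. Since $x \in N$, we have an equality $y + x = x$ exhibiting $y$ in the set $\{z \in M \mid z + a = b \text{ for some } a, b \in N\}$ (take $a = b = x$); by Proposition \ref{proposition: saturation closure} this set is $\overline{N}$, and since $N$ is saturated, $\overline{N} = N$, so $y \in N$. Alternatively, and more directly from the definition, $x + y = x \in N$ and $x \in N$, so applying the saturation property with the roles ``$x + y \in N$ and $x \in N$ implies $y \in N$'' gives $y \in N$ immediately; this avoids even invoking Proposition \ref{proposition: saturation closure}.

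For the converse, suppose $N$ has the downward-closure property: $x \in N$ and $y \leq x$ imply $y \in N$. I want to show $N$ is saturated, i.e. that $x + y \in N$ and $x \in N$ imply $y \in N$ for all $x, y \in M$. The key observation is that in an idempotent semiring module, $y \leq x + y$ always holds: $(x+y) + y = x + (y + y) = x + y$ by additive idempotency. So from $x + y \in N$ and $y \leq x + y$, the downward-closure property yields $y \in N$ directly. (Note the hypothesis $x \in N$ is not even needed for this direction — saturation is a consequence of downward closure alone here, which is worth remarking but not essential.)

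I do not anticipate a serious obstacle; the whole argument rests on the two elementary identities $x + y = x$ (definition of $\leq$) and $(x+y)+y = x+y$ (additive idempotency), together with the submodule closure properties. The only point requiring a little care is bookkeeping the direction of the order relation and making sure the roles of $x$ and $y$ in the saturation condition ``$x, x+y \in N \Rightarrow y \in N$'' are matched correctly against ``$y \leq z \Rightarrow y \in N$ when $z \in N$''; writing $z = x + y$ makes the match transparent.
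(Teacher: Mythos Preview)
Your proposal is correct and follows essentially the same argument as the paper: both directions hinge on the identities $x+y=x$ (for $y\leq x$) and $(x+y)+y=x+y$ (from idempotency), applied directly to the definition of saturation. Your observation that the hypothesis $x\in N$ is superfluous in the converse direction is a nice bonus the paper does not mention.
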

\begin{proof}
Suppose that $N$ is saturated. If $x \in N$ and $y\leq x$, then we have $x+y=x \in N$. Now $y \in N$ since $N$ is saturated. Conversely, suppose $N$ satisfies the given condition. Let $x \in N$ and $x+y \in N$. Then, we have $(x+y)+y=x+y \in N$ and hence $y \leq (x+y)$, implying that $y \in N$ from the given condition.
\end{proof}

\begin{lem}\label{lemma: saturation inverse}
Let $R$ be a semiring and $f:M \to N$ be a morphism of $R$-modules. If $L$ is a saturated submodule of $N$, then $f^{-1}(L)$ is a saturated submodule of $M$
\end{lem}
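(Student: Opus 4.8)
The plan is to verify the two defining properties in turn: that $f^{-1}(L)$ is a submodule of $M$, and that it is saturated. Neither step requires more than the definitions, so I expect no genuine obstacle; the only thing to be careful about is invoking the homomorphism property of $f$ in the right direction.

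First I would check that $f^{-1}(L)$ is a submodule. It is nonempty since $f(0_M)=0_N\in L$, so $0_M\in f^{-1}(L)$. If $x,y\in f^{-1}(L)$, then $f(x+y)=f(x)+f(y)\in L$ because $L$ is closed under addition, so $x+y\in f^{-1}(L)$; and for $r\in R$, $f(rx)=rf(x)\in L$, so $rx\in f^{-1}(L)$. Hence $f^{-1}(L)$ is a submodule of $M$.

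Next I would verify saturation directly from Definition \ref{definition: ideals for semirings}(1). Suppose $x,\,x+y\in f^{-1}(L)$ for some $x,y\in M$. Applying $f$ and using that $f$ is a morphism of $R$-modules, we get $f(x)\in L$ and $f(x)+f(y)=f(x+y)\in L$. Since $L$ is saturated, the relations $f(x)\in L$ and $f(x)+f(y)\in L$ force $f(y)\in L$, i.e.\ $y\in f^{-1}(L)$. This shows $f^{-1}(L)$ is saturated, completing the proof.

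The step most likely to trip someone up — though it is still routine — is making sure the hypothesis of saturation for $L$ is applied to the pair $\big(f(x),\,f(x)+f(y)\big)$ rather than something else; everything else is a formal consequence of $f$ being a homomorphism of monoids compatible with the $R$-action.
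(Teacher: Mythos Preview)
Your proof is correct and follows essentially the same approach as the paper: verify $f^{-1}(L)$ is a submodule, then apply $f$ to the saturation hypothesis and use that $L$ is saturated. The paper's version is slightly terser (it declares the submodule part ``clear'') and swaps the roles of $x$ and $y$, but there is no substantive difference.
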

\begin{proof}
Let $K=f^{-1}(L)$. It is clear that $K$ is a submodule of $M$. If $x+y,y \in K$, then we have $f(x+y)=f(x)+f(y),f(y) \in L$. Since $L$ is saturated, we have $f(x) \in L$, in particular $x \in K$, showing that $K$ is saturated. 
\end{proof}

\begin{rmk}\label{remark: saturated quotient}
Let $M$ be an $R$-module and $N$ be a submodule of $M$. Then $M/N$ is isomorphic to $M/\angles{N}$, where $\angles{N}$ is the saturation closure of $N$. To see this, we only have to show that $N$ and $\angles{N}$ define the same equivalence relation. Let $\sim$ (resp.~$\equiv$) be the equivalence relation defined by $\angles{N}$ (resp.~$N$). Since $N\subseteq \angles{N}$, we only have to prove that if $x \sim y$ for $\angles{N}$, then $x \equiv y$ for $N$. Now, if $x \sim y$ for $\angles{N}$, then $x+a=y+b$ for some $a,b \in \angles{N}$. It follows from Proposition \ref{proposition: saturation closure} that $a+n_1=n_2$ and $b+n_1'=n_2'$ for some $n_1,n_1',n_2,n_2' \in N$. By combining these, we have
\[
x+a+n_1+n_1'=x+n_2+n_1'=y+b+n_1+n_1'=y+n_1+n_2',
\] 
showing that $x\equiv y$ for $N$. 
\end{rmk}

\begin{mydef}
Let $R$ be a semiring and $f:M \to N$ be a morphism of $R$-modules. 
\begin{enumerate}
	\item 
$f$ is said to be a \emph{normal monomorphism} if $f$ is the equalizer of some morphism and the zero map. 
\item 
$f$ is said to be a \emph{normal epimorphism} if $f$ is the coequalizer of some morphism and the zero map. 	
\end{enumerate}
\end{mydef}

\begin{rmk}\label{remark: inj surj}
Let $R$ be a semiring and $f:M \to N$ be a morphism of $R$-modules. 
\begin{enumerate}
	\item 
Since an equalizer in the category of modules over a semiring is just a set-theoretic equalizer, if $f$ is a normal monomorphism, then $f$ is injective. In particular, $f$ is a monomorphism. 
	\item 
One can easily observe that the following is a coequalizer diagram in the category of modules over $R$:
\begin{equation}
	\begin{tikzcd}
		L\arrow[r, shift left,"g"] \arrow[r,swap, shift right,"0"]
		& M \arrow[r, "\pi"] & M/g(L),
	\end{tikzcd}
\end{equation}
\end{enumerate}
where $\pi$ is a canonical projection. In fact, if $h:M \to K$ is an $R$-module morphism such that $hg=0$. Then, we can define a map $u:M/g(L) \to K$ sending $[m]$ to $h(m)$, where $[m]$ is the equivalence class of $m \in M$ in $M/g(L)$. Note that $u$ is well-defined; if $[m]=[n]$, then $m+x=n+y$ for some $x, y \in g(L)$. Hence, $h(m+x)=h(m)+h(x)=h(m)$ and similarly $h(n+y)=h(n)$. It is also clear that $u$ is unique once it exists. This shows that any normal epimorphism is surjective, in particular, it is an epimorphism. 
\end{rmk}

The following provides an equivalent description for normal monomorphisms and normal epimorphisms. 

\begin{lem}\label{lemma: normal modules iff}
Let $R$ be a semiring and $f:M \to N$ be a morphism of $R$-modules. 
\begin{enumerate}
	\item
$f$ is a normal monomorphism if and only if $f$ is injective and $f(M)$ is isomorphic to a saturated submodule of $N$.  
\item
$f$ is a normal epimorphism if and only if $N$ is isomorphic to the quotient of $M$ by a saturated submodule and $f$ is a natural projection.
\end{enumerate}
\end{lem}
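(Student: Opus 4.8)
The plan is to prove both equivalences by exhibiting explicit equalizer and coequalizer diagrams, relying on the set-theoretic description of equalizers and the explicit coequalizer diagram recorded in Remark \ref{remark: inj surj}, together with the structural facts about saturation established above (Proposition \ref{proposition: saturation closure}, Lemma \ref{lemma: saturation inverse}, and Remark \ref{remark: saturated quotient}).

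For part (1), in the ``only if'' direction, suppose $f$ is the equalizer of a morphism $g \colon N \to P$ and the zero map. Since equalizers in $\Mod_R$ are set-theoretic (Remark \ref{remark: inj surj}), $f$ is injective and $f(M) = g^{-1}(0_P)$. The zero submodule $\{0_P\} \subseteq P$ is visibly saturated (if $x = 0_P$ and $x + y = 0_P$, then $y = 0_P$), so $f(M) = g^{-1}(\{0_P\})$ is a saturated submodule of $N$ by Lemma \ref{lemma: saturation inverse}, and $f$ corestricts to an isomorphism $M \xrightarrow{\sim} f(M)$ because a bijective module homomorphism has a homomorphic inverse. In the ``if'' direction, suppose $f$ is injective and $L := f(M)$ is a saturated submodule of $N$; I claim $f$ is the equalizer of the canonical projection $\pi \colon N \to N/L$ and the zero map. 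Indeed, this equalizer is the submodule $\{n \in N : \pi(n) = 0\}$ of $N$ with its inclusion, which by the definition of the quotient congruence is $\{n \in N : n + a = b \text{ for some } a, b \in L\}$; by Proposition \ref{proposition: saturation closure} this set is the saturation closure $\langle L \rangle$, which equals $L$ since $L$ is saturated. Since $f$ is injective, it is isomorphic, as an arrow into $N$, to the inclusion $L \hookrightarrow N$, hence is an equalizer; a routine check of the universal property confirms this.

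For part (2), in the ``only if'' direction, if $f$ is the coequalizer of some $g \colon L \to M$ and the zero map, then by Remark \ref{remark: inj surj} the coequalizer is the canonical projection $M \to M/g(L)$, so $N$ is isomorphic to $M/g(L)$ compatibly with $f$. The submodule $g(L)$ need not be saturated, but by Remark \ref{remark: saturated quotient} we have $M/g(L) \cong M/\langle g(L) \rangle$ compatibly with the two projections, so $N$ is isomorphic to the quotient of $M$ by the saturated submodule $\langle g(L) \rangle$ and $f$ is the corresponding natural projection. Conversely, if $N \cong M/S$ with $f$ the natural projection, for $S$ a saturated (indeed arbitrary) submodule, then $f$ is the coequalizer of the inclusion $S \hookrightarrow M$ and the zero map, again by Remark \ref{remark: inj surj}.

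The only step carrying real content is the ``if'' direction of part (1): the saturation hypothesis on $f(M)$ is used precisely there, through the computation that the kernel congruence of $N \to N/L$ detects exactly $\langle L \rangle$ (Proposition \ref{proposition: saturation closure}), so that $L$ being saturated is what lets $f$ recover its own image as an equalizer. Everything else is bookkeeping with the explicit (co)equalizer descriptions already in hand; the one point requiring mild care is tracking that all the isomorphisms produced are compatible with the relevant structure maps, so that the conclusions about the morphism $f$ itself, and not merely about abstract isomorphism classes of objects, are valid.
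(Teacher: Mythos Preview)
Your proof is correct and follows essentially the same approach as the paper's: both directions of (1) use the set-theoretic equalizer description together with Lemma~\ref{lemma: saturation inverse} and Proposition~\ref{proposition: saturation closure}, and (2) uses the explicit coequalizer of Remark~\ref{remark: inj surj} together with Remark~\ref{remark: saturated quotient}. Your write-up is somewhat more explicit in the converse of (1), spelling out why the equalizer of $\pi$ and $0$ is exactly $\langle L\rangle$, where the paper simply asserts this; otherwise the arguments coincide.
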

\begin{proof}
$(1):$ Suppose that $f$ is a normal monomorphism. From Remark \ref{remark: inj surj}, we may assume that 
\[
M=\{n \in N \mid g(n)=0\}
\]
for some $g:N \to L$ and $f:M \to N$ is an inclusion. In particular, $M=g^{-1}(0)$ which is clearly saturated from Lemma \ref{lemma: saturation inverse} since $\{0\}$ is a saturated submodule. It follows that $f(M)$ is a saturated submodule of $N$. Conversely, we may assume that $M$ is a saturated submodule of $N$. Consider the natural projection $\pi:N \to N/M$. One can easily see that $f:M\to N$ is the equalizer of $\pi$ and the zero map since the equalizer should be the smallest saturated submodule of $N$ containing $M$ in this case. 

$(2):$ Suppose that $f$ is a normal epimorphism. It follows from Remarks \ref{remark: saturated quotient} and \ref{remark: inj surj} that $N$ is isomorphic to the quotient of $M$ by a saturated submodule and $f$ is a natural projection. The converse is clear. 
\end{proof}

\begin{mydef}\label{definition: proto semiring module}
Let $\Mod_R$ be the category of modules over a semiring $R$. Let $\mathfrak{M}$ be the class of normal monomorphisms in $\Mod_R$ and $\mathfrak{E}$ the class of normal epimorphisms in $\Mod_R$.
\end{mydef}

In the following, we prove that $(\Mod_R,\mathfrak{M},\mathfrak{E})$ is a proto-exact category. We will use the terms normal monomorphisms (resp.~normal epimorphisms) and admissible monomorphisms (resp.~admissible epimorphisms) interchangeably. We first need the following version of the third isomorphism theorem for modules over a semiring.\footnote{Since we cannot locate the proof of the statement, we include our proof here.}

\begin{lem}\label{lemma: third iso theorem}
Let $R$ be a semiring, $M$ an $R$-module, and $N$ a saturated submodule of $M$. Let $\pi:M \to M/N$ be a natural projection and $K$ be a saturated submodule of $M/N$. Then we have
\[
M/\pi^{-1}(K) \simeq (M/N)/K.
\]
\end{lem}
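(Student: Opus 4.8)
The plan is to prove this by identifying the equivalence relations on $M$ defined by the submodules $\pi^{-1}(K)$ and $N$, and showing that the composite projection $M \to (M/N)/K$ induces exactly the congruence associated to $\pi^{-1}(K)$. First I would record that $\pi^{-1}(K)$ is a saturated submodule of $M$ by Lemma \ref{lemma: saturation inverse}, so the quotient $M/\pi^{-1}(K)$ is formed by a saturated submodule and the statement makes sense. Then I would consider the composite $R$-module morphism $q = p \circ \pi \colon M \to (M/N)/K$, where $p \colon M/N \to (M/N)/K$ is the natural projection; this is a normal epimorphism as a composite of two normal epimorphisms, hence surjective by Remark \ref{remark: inj surj}.

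The key step is to show that, for $x, y \in M$, we have $q(x) = q(y)$ if and only if $x \sim y$ in the congruence \eqref{eq: quo} defined by $\pi^{-1}(K)$. The ``if'' direction is immediate: if $x + a = y + b$ with $a, b \in \pi^{-1}(K)$, applying the morphism $q$ and using that $q(a) = q(b) = 0$ (since $\pi(a), \pi(b) \in K$) gives $q(x) = q(y)$. For the ``only if'' direction, suppose $q(x) = q(y)$. Unwinding the two quotient constructions: $p(\pi(x)) = p(\pi(y))$ means $\pi(x) + u = \pi(y) + v$ in $M/N$ for some $u, v \in K$. Lift $u = \pi(a)$, $v = \pi(b)$ with $a, b \in M$; since $u, v \in K$ we have $a, b \in \pi^{-1}(K)$. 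Then $\pi(x + a) = \pi(y + b)$ in $M/N$, so $x + a + n = y + b + n'$ for some $n, n' \in N$. Now $N \subseteq \pi^{-1}(K)$ (as $\pi(N) = 0 \in K$), so $a + n$ and $b + n'$ both lie in $\pi^{-1}(K)$, witnessing $x \sim y$ for $\pi^{-1}(K)$.

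Having matched the congruences, I would conclude as follows: the surjection $q \colon M \to (M/N)/K$ factors through $M/\pi^{-1}(K)$, inducing a surjective $R$-module morphism $\bar q \colon M/\pi^{-1}(K) \to (M/N)/K$; and $\bar q$ is injective precisely because $q(x) = q(y)$ forces $x \sim y$, i.e. $[x] = [y]$ in $M/\pi^{-1}(K)$. A bijective $R$-module morphism between modules over a semiring is an isomorphism (its set-theoretic inverse is automatically additive and $R$-linear), so $\bar q$ is the desired isomorphism $M/\pi^{-1}(K) \simeq (M/N)/K$.

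I do not expect a serious obstacle here; the only place demanding care is the bookkeeping in the ``only if'' direction, specifically making sure the elements $a, b$ chosen as lifts of $u, v \in K$ genuinely land in $\pi^{-1}(K)$ and that the $N$-part of the congruence in $M/N$ gets absorbed into $\pi^{-1}(K)$ via the inclusion $N \subseteq \pi^{-1}(K)$. Everything else is a routine diagram-chase using Proposition \ref{proposition: saturation closure} and the definition \eqref{eq: quo} of the quotient congruence, together with the fact (already used implicitly in the paper) that bijective morphisms of semiring modules are isomorphisms.
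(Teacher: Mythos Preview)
Your argument is correct and is essentially the paper's proof: both construct the natural map $M/\pi^{-1}(K)\to (M/N)/K$, verify well-definedness (your ``if'' direction), and prove injectivity by lifting $u,v\in K$ to $\pi^{-1}(K)$ and absorbing the $N$-part via $N\subseteq\pi^{-1}(K)$ (your ``only if'' direction), with surjectivity immediate. One small presentational point: do not justify surjectivity of $q$ by invoking that a composite of normal epimorphisms is a normal epimorphism, since that closure statement (Lemma~\ref{lemma: composition lemma}) is proved \emph{using} the present lemma; just say $q$ is surjective as a composite of surjections.
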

\begin{proof}
It is enough to show that the following natural map is well-defined and injective:
\[
\varphi: M/\pi^{-1}(K) \to (M/N)/K, \quad \overline{m} \mapsto [\widetilde{m}],
\]	
where $\overline{m}$ (resp.~$\widetilde{m}$) is the equivalence class of $m \in M$ in $M/\pi^{-1}(K)$ (resp.~in $M/N$) and $[\widetilde{m}]$ is the equivalence class of $\widetilde{m}$ in $(M/N)/K$. 

We first show that $\varphi$ is well-defined. For $m_1,m_2 \in M$, suppose that $\overline{m_1}=\overline{m_2}$. Then we have
\begin{equation}
m_1+c_1 = m_2 +c_2 \textrm{ for some } c_1,c_2 \in \pi^{-1}(K).
\end{equation}
It follows that
\begin{equation}\label{eq: third iso}
\pi(m_1)+\pi(c_1) = \pi(m_2) + \pi(c_2).
\end{equation}
Since $\pi(c_1),\pi(c_2) \in K$, \eqref{eq: third iso} implies that $\pi(m_1)=\widetilde{m_1}$ and $\pi(m_2)=\widetilde{m_2}$ are equivalent under $K$. In particular, $[\widetilde{m_1}]=[\widetilde{m_2}]$, showing that $\varphi$ is well-defined. 

Next, we show that $\varphi$ is injective. Suppose that $\varphi(\overline{m_1})=\varphi(\overline{m_2})$. So, $m_1$ and $m_2$ induce the same element of $(M/N)/K$, or there exist $k_1,k_2 \in K$ such that
\begin{equation}
\pi(m_1)+k_1=\pi(m_2)+k_2.
\end{equation} 
By surjectivity of $\pi$, we may choose $d_1$ and $d_2$ which map to $k_1$ and $k_2$ under $\pi$. This implies that
\begin{equation}
\pi(m_1) + \pi(d_1) = \pi(m_2) + \pi(d_2),
\end{equation}
in particular, $\overline{m_1+d_1}=\overline{m_2+d_2}$. Since $d_1,d_2 \in \pi^{-1}(K)$, we have that
\[
\overline{m_1}=\overline{m_1+d_1}=\overline{m_2+d_2}=\overline{m_2},
\]
showing that $\varphi$ is injective. 
\end{proof}		

\begin{lem}\label{lemma: composition lemma}
$\MM$ and $\EE$ in Definition \ref{definition: proto semiring module} are closed under composition and contain all isomorphisms.
\end{lem}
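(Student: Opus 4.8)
The plan is to reduce both assertions to the concrete description of normal monomorphisms and normal epimorphisms provided by Lemma \ref{lemma: normal modules iff}, after which everything becomes a short manipulation with saturated submodules. The isomorphism case is immediate: the whole module $M$ and the zero submodule $\{0\}$ are always saturated, so by Lemma \ref{lemma: normal modules iff} every isomorphism $f\colon M\to N$ is at once a normal monomorphism (it is injective with saturated image $N\subseteq N$) and a normal epimorphism ($N$ is the quotient $M/\{0\}$ and, up to this isomorphism, $f$ is the natural projection). Since isomorphisms are preserved under composition this already handles part of the claim, and it remains to treat composites of normal monomorphisms and of normal epimorphisms.

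For a composite of normal monomorphisms $f\colon M\to N$ and $g\colon N\to P$, I would use Lemma \ref{lemma: normal modules iff} (and the fact that isomorphisms preserve saturation) to reduce, after transporting along $f\colon M\xrightarrow{\sim}f(M)$ and $g\colon N\xrightarrow{\sim}g(N)$, to the situation where $M$ is a saturated submodule of $N$, $N$ is a saturated submodule of $P$, and $f,g$ are the inclusions. Then $g\circ f$ is the inclusion $M\hookrightarrow P$, and it suffices to verify that a saturated submodule of a saturated submodule is saturated. This is a one-line chase with the definition: if $x,x+y\in M$ with $x,y\in P$, then $x,x+y\in N$, so saturation of $N$ in $P$ gives $y\in N$; and then $x,x+y\in M$ with $y\in N$ together with saturation of $M$ in $N$ gives $y\in M$.

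For a composite of normal epimorphisms $f\colon M\to N$ and $g\colon N\to P$, Lemma \ref{lemma: normal modules iff} lets me assume $N=M/N_1$ with $N_1\subseteq M$ saturated and $f=\pi$ the natural projection, and $P=(M/N_1)/N_2$ with $N_2\subseteq M/N_1$ saturated and $g$ the natural projection. By Lemma \ref{lemma: saturation inverse}, $\pi^{-1}(N_2)$ is a saturated submodule of $M$, and by the third isomorphism theorem (Lemma \ref{lemma: third iso theorem}) there is an isomorphism $\varphi\colon M/\pi^{-1}(N_2)\xrightarrow{\sim}(M/N_1)/N_2$. Tracing an element $m\in M$ through the two paths — with the notation of Lemma \ref{lemma: third iso theorem}, $m\mapsto\overline{m}\mapsto[\widetilde m]$ on one side and $m\mapsto\widetilde m\mapsto[\widetilde m]$ on the other — shows that $g\circ f=\varphi\circ q$, where $q\colon M\to M/\pi^{-1}(N_2)$ is the natural projection. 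Since $\varphi$ is an isomorphism, Lemma \ref{lemma: normal modules iff}(2) then identifies $g\circ f$ with a natural projection modulo a saturated submodule, hence $g\circ f\in\EE$.

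The element chases are entirely routine; the only substantive ingredient is Lemma \ref{lemma: third iso theorem}, which is already established. Consequently the part requiring care is purely bookkeeping: applying the ``up to isomorphism'' clauses of Lemma \ref{lemma: normal modules iff} consistently, and checking that the composite of the two projections is correctly identified, via $\varphi$, with the natural projection $M\to M/\pi^{-1}(N_2)$.
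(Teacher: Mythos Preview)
Your proposal is correct and follows essentially the same route as the paper: reduce to the description of normal mono-/epimorphisms in Lemma~\ref{lemma: normal modules iff}, handle the mono case by the trivial observation that a saturated submodule of a saturated submodule is saturated, and handle the epi case by combining Lemma~\ref{lemma: saturation inverse} with the third isomorphism theorem (Lemma~\ref{lemma: third iso theorem}). The only difference is that you spell out the isomorphism case and the monomorphism chase, while the paper dismisses both as clear.
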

\begin{proof}
It is clear that $\MM$ and $\EE$ contain all isomorphisms. From the descriptions in Lemma \ref{lemma: normal modules iff}, it is clear that $\MM$ is closed under composition. So, we prove that $\EE$ is closed under composition. Let $g:L\to M$ and $f:M\to N$ be normal epimorphisms and $h:=fg$. From Lemma \ref{lemma: normal modules iff}, we may assume that $M= L/T$ for some saturated submodule $T$ of $L$ and $g:L \to L/T$ is a projection. Now, since $f:L/T \to N$ is a normal epimorphism, again from Lemma \ref{lemma: normal modules iff}, we may assume that $N=(L/T)/H$ for some saturated submodule $H$ of $(L/T)$ and $f$ is a projection. Let $\pi:L \to (L/T)/H$ be a projection. It follows from Lemmas \ref{lemma: saturation inverse} and \ref{lemma: third iso theorem} that $\pi^{-1}(H)$ is a saturated submodule of $L$ and $L/\pi^{-1}(H)=(L/T)/H$. Hence, we conclude that
\[
h=fg: L \to N=L/\pi^{-1}(H)
\]
is a projection. This shows that $\EE$ is closed under composition. 
\end{proof}

\begin{rmk}
One may be also tempted to use regular monomorphisms and regular epimorphisms to impose a proto-exact structure on $\Mod_R$. However, these classes do not behave well for our purpose. More explicitly, regular mono/epi-morphisms are not closed under composition in general. In particular, Lemma \ref{lemma: composition lemma} does not hold for regular morphisms. Note that even normal mono/epi-morphisms are not closed under composition in arbitrary pointed categories although they are closed under composition for $\Mod_R$.
\end{rmk}

\begin{lem}\label{lemma: bi-Cartesian lemma}
Let $R$ be a semiring and $M$ an $R$-module. Let $i \in \mathfrak{M}$, $\pi,\pi' \in \mathfrak{E}$, and $L$ a saturated submodule of $M$. Then, the following hold.
\begin{enumerate}
\item 
Projections $\pi:M \to M/L$ and $\pi':N\to N/L$ are normal epimorphisms.
	\item 
The induced map $i':M/L \to N/L$ is a normal monomorphism.
\item 
The following commutative diagram is a bi-Cartesian diagram in $\Mod_R$.
\begin{equation}\label{eq: Bicart}
	\begin{tikzcd}
		M
		\ar[r,"i",hook]
		\ar[d,"\pi",two heads,swap]
		&
		N
		\ar[d,"\pi'",two heads]
		\\
		M/L
		\ar[r,"i'",hook]
		&
		N/L
	\end{tikzcd}
\end{equation}
\end{enumerate}
\end{lem}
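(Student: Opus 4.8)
The plan is to reduce to a concrete situation and then verify the three assertions directly, with essentially all of the content concentrated in two uses of the hypothesis that $M$ is saturated inside $N$. First I would invoke Lemma~\ref{lemma: normal modules iff}(1) to replace $i$ by an honest inclusion: identifying $M$ with its image, we may assume $M$ is a saturated submodule of $N$ and $i$ is the inclusion. Next I record that $L$ is saturated in $N$: if $x+y\in L$ and $y\in L$ with $x,y\in N$, then $x+y\in M$ and $y\in M$ force $x\in M$ (as $M$ is saturated in $N$), and then $x\in L$ since $L$ is saturated in $M$. Granting this, assertion~(1) is immediate from Lemma~\ref{lemma: normal modules iff}(2), since $M/L$ (resp.\ $N/L$) is the quotient of $M$ (resp.\ $N$) by the saturated submodule $L$ and $\pi$ (resp.\ $\pi'$) is the natural projection; the induced map $i'$ is the factorization through $\pi$ of the composite $M\hookrightarrow N\xrightarrow{\pi'}N/L$, so on classes it sends $\overline{m}$ to $\overline{m}$.

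For assertion~(2), I would first note that $i'$ is injective: if $\overline{m_1}=\overline{m_2}$ in $N/L$ then $m_1+l_1=m_2+l_2$ in $N$ for some $l_1,l_2\in L$, but all four elements lie in $M$, so the identity already holds in $M$ and $\overline{m_1}=\overline{m_2}$ in $M/L$. Hence $i'$ carries $M/L$ isomorphically onto $\pi'(M)$, and by Lemma~\ref{lemma: normal modules iff}(1) it suffices to show that $\pi'(M)$ is a saturated submodule of $N/L$. This is the first place saturatedness is used: if $\overline{x}+\overline{y}\in\pi'(M)$ and $\overline{y}\in\pi'(M)$ with $x,y\in N$, then unwinding the congruence on $N/L$ and using $L\subseteq M$ gives $x+y+l_1\in M$ and $y+l_3\in M$ for suitable $l_i\in L$; adding elements of $L\subseteq M$ yields $x+(y+l_1+l_3)\in M$ and $y+l_1+l_3\in M$, whence $x\in M$ because $M$ is saturated in $N$, so $\overline{x}\in\pi'(M)$.

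For assertion~(3) I would verify the pushout and pullback universal properties of the square~\eqref{eq: Bicart} directly; commutativity is clear. For the pushout: given $\alpha\colon M/L\to Z$ and $\beta\colon N\to Z$ with $\alpha\pi=\beta i$, observe that $\beta(l)=\beta(i(l))=\alpha(\pi(l))=\alpha(0)=0$ for $l\in L$, so $\gamma(\overline{n}):=\beta(n)$ is a well-defined $R$-module homomorphism $N/L\to Z$ (if $n_1+l_1=n_2+l_2$ then $\beta(n_1)=\beta(n_1)+\beta(l_1)=\beta(n_2)+\beta(l_2)=\beta(n_2)$); it is the unique map with $\gamma\pi'=\beta$, and one checks $\gamma i'=\alpha$. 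For the pullback: given $\alpha\colon Z\to M/L$ and $\beta\colon Z\to N$ with $i'\alpha=\pi'\beta$, I would show $\beta(z)\in M$ for every $z$ by writing $\alpha(z)=\overline{w}$ with $w\in M$; the equality $i'\alpha(z)=\pi'\beta(z)$ in $N/L$ gives $w+l_1=\beta(z)+l_2$ in $N$ with $l_1,l_2\in L\subseteq M$, and since $M$ is saturated in $N$ this forces $\beta(z)\in M$. Then $\gamma\colon Z\to M$, $\gamma(z):=\beta(z)$, is an $R$-module homomorphism with $i\gamma=\beta$, and $\pi\gamma=\alpha$ because the identity $w+l_1=\beta(z)+l_2$ already holds in $M$; uniqueness of $\gamma$ is forced by injectivity of $i$.

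The main obstacle is bookkeeping rather than anything deep: one must keep careful track of which of the congruences on $M$, $N$, $M/L$, $N/L$ each bar refers to, together with the elementary remark that an identity among elements of $M$ holds in $M$ if and only if it holds in $N$. The only genuinely non-formal steps are the proof that $\pi'(M)$ is saturated in $N/L$ and, in the pullback test diagram, the fact that $\beta$ automatically takes values in $M$; both rest squarely on the assumption that $M$ is saturated in $N$, so I would be careful to use that assumption explicitly at exactly those two points.
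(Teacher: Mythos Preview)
Your proposal is correct and follows essentially the same route as the paper: reduce to $M$ a saturated submodule of $N$ via Lemma~\ref{lemma: normal modules iff}, check that $L$ is saturated in $N$, verify that $i'(M/L)$ is saturated in $N/L$ using saturatedness of $M$ in $N$, and then check the pushout and pullback universal properties directly with the same factorizations. Your bookkeeping for the saturatedness of $\pi'(M)$ in $N/L$ is slightly more streamlined than the paper's (you add $l_3$ and $l_1$ to the two witnessed identities and apply saturatedness once), but the idea is identical.
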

\begin{proof}
We first note that since $L$ is a saturated submodule of $M$ and $M$ is a saturated submodule of $L$ (as $i \in \MM$), $L$ is a saturated submodule of $N$. One can easily see from Lemma \ref{lemma: normal modules iff} that $\pi$ and $\pi'$ are normal epimorphisms.
	
Next, we claim that if $i:M \to N$ is a normal monomorphism, then the induced map $i':M/L \to N/L$ is a normal monomorphism. In fact, it is clear that $i'$ is injective. Hence, it is enough to prove that $i'(M/L)$ is saturated in $N/L$. We identify $M/L$ and $i'(M/L)$. Let $[x]$ and $[y]$ be the equivalence classes of $x,y \in N$ in $N/L$. Suppose that $[x]+[y]=[x+y] \in M/L$ and $[x] \in M/L$, that is, $[x+y]=[m]$ and $[x]=[m']$ for some $m,m' \in M$. It follows that
\[
x+y+\ell_1 = m+\ell_2, \quad x+\ell_3 = m'+\ell_4, \quad \ell_i \in L. 
\]
Hence we have
\[
(m+\ell_2)+\ell_3=x+\ell_3+(y+\ell_1) = m'+\ell_4+(y+\ell_1)=y+ (m'+\ell_1+\ell_4).
\]
Therefore $y+(m'+\ell_1+\ell_4) \in M$ and $(m'+\ell_1+\ell_4) \in M$. Since $i$ is a normal monomorphism, $M$ is saturated submodule of $N$, so we conclude that $y \in M$, showing that $[y] \in M/L$. In particular, \eqref{eq: Bicart} is well-defined and $i'\in \MM$. 

(co-Cartesian) Consider the following commutative diagram, where $K$ is an $R$-module and $\alpha,\beta$ are morphisms in $\Mod_R$:
	\begin{equation}\label{eq: pushout1}
	\begin{tikzcd}
	M \ar[hook]{r}{i} \ar[swap,two heads]{d}{\pi} & N \ar[two heads]{d}{\pi'}
	\ar[bend left]{ddr}{\beta} &
	\\
	M/L \ar[hook]{r}{i'} \ar[swap, bend right]{rrd}{\alpha} & N/L \ar[dotted]{dr}{\gamma} &
	\\
	& & K
	\end{tikzcd}
	\end{equation}
Since $L \subseteq M$, for any $\ell \in L$, we have
\[
\pi'i(\ell) = i'\pi(\ell) = 0_{N/L}. 
\]
Since $i'$ is injective (as it is a normal monomorphism), we have $\pi(\ell)=0_{M/L}$. It follows that
\begin{equation}\label{eq: lll}
	0_K=\alpha\pi(\ell)=\beta i(\ell)=\beta(\ell).
\end{equation}
Now, if $\pi'(x)=\pi'(y)$, then we have $x+\ell_1=y+\ell_2$ for some $\ell_i \in L$. Then, from \eqref{eq: lll}, we have
\[
\beta(x)=\beta(y).
\]
Therefore, the function $\gamma:N/L \to K$ sending $[x]$ to $\beta(x)$ is a well-defined morphism of $R$-modules. Furthermore, it is clear that with $\gamma$, \eqref{eq: pushout1} commutes, and if a morphism $\delta:N/L \to K$ makes \eqref{eq: pushout1} to commute, then $\delta=\gamma$. Hence $\eqref{eq: Bicart}$ is a co-Cartesian diagram. 
	
	(Cartesian) Consider the following diagram:
	\begin{equation}\label{eq: Catt}
	\begin{tikzcd}
	K \ar[bend left]{drr}{\beta} \ar[swap, bend right]{ddr}{\alpha} \ar[dotted]{dr}{\gamma} & &
	\\
	&M\ar[hook]{r}{i} \ar[swap,two heads]{d}{\pi} & N \ar[two heads]{d}{\pi'}
	\\
	& M/L \ar[hook]{r}{i'} & N/L
	\end{tikzcd}
	\end{equation}
where $K$ is an $R$-module and $\alpha,\beta$ are morphisms of $R$-modules. We claim that $\beta(K) \subseteq i(M)$. In fact, for any $x \in K$, we have $i'\alpha (x) = \pi'\beta(x)$. We may write $\alpha(x)=\pi(m)$ for some $m \in M$ since $\pi$ is surjective (as it is a normal epimorphism). Hence we have
	\[
	\pi'i(m)=i'\pi(m) = \pi'\beta(x).
	\]
	In particular, $\beta(x)+\ell_1= i(m) +\ell_2$ for some $\ell_i \in L$. But, since $L\subseteq M$ and $M$ is assumed to be saturated in $N$ (since $i$ is a normal monomorphism), we have that $\beta(x) \in i(M)$. Hence, the function $\gamma:K \to M$ sending $x$ to $\beta(x)$ is a well-defined morphism of $R$-modules, which makes the diagram \eqref{eq: Catt} to commute. It is clear that such $\gamma$ is unique. This proves that \eqref{eq: Bicart} is a Cartesian diagram. 
\end{proof}

\begin{lem}\label{lemma: pro4}
	Every diagram in $\Mod_R$
	\begin{equation}\label{eq: pro4}
	\begin{tikzcd}
	{} & N\ar["j'",d,two heads] \\
	M' \ar["i'",r,hook,swap] & N' 
	\end{tikzcd}
	\end{equation}
	with $i' \in \MM$ and $j' \in \EE$ can be completed to a
	bi-Cartesian square \eqref{eq: bi-cat} with $i \in \MM$ and
	$j \in \EE$.
\end{lem}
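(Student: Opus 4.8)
The plan is to build the pullback of \eqref{eq: pro4} by hand and then extract the pushout property from Lemma \ref{lemma: bi-Cartesian lemma}. First I would put the data in standard form using Lemma \ref{lemma: normal modules iff}: there is no loss in assuming that $N' = N/L$ for a saturated submodule $L$ of $N$ with $j'$ the natural projection, and that $M'$ is a saturated submodule of $N'$ with $i'$ the inclusion.

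Next, set $M := (j')^{-1}(M') \subseteq N$. By Lemma \ref{lemma: saturation inverse}, $M$ is a saturated submodule of $N$, so the inclusion $i \colon M \hookrightarrow N$ lies in $\MM$. Since $0_{N'} \in M'$ we have $L = (j')^{-1}(0_{N'}) \subseteq (j')^{-1}(M') = M$, and $L$ is saturated in $N$ (Lemma \ref{lemma: saturation inverse} again applied to $\{0_{N'}\} \subseteq N'$), hence saturated in $M$. This puts us exactly in the hypotheses of Lemma \ref{lemma: bi-Cartesian lemma}, applied to the normal monomorphism $i \colon M \hookrightarrow N$ and the saturated submodule $L$ of $M$: it yields that the projection $j \colon M \to M/L$ is a normal epimorphism, the induced map $M/L \to N/L$ is a normal monomorphism, and the square with corners $M$, $N$, $M/L$, and $N/L = N'$ is bi-Cartesian in $\Mod_R$. (Note that $M$, being the set of $n \in N$ with $j'(n) \in M'$, is --- since $i'$ is injective and limits in $\Mod_R$ are set-theoretic --- a model for the categorical pullback of $i'$ and $j'$, so this really is a completion of \eqref{eq: pro4}.)

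It then remains to identify $M/L$ with $M'$ compatibly with the structure maps. The assignment $[m] \mapsto j'(m)$ defines a map $M/L \to M'$; it is surjective because $j'$ is, and it is injective because the $L$-congruence on $M$ is precisely the restriction to $M$ of the fiber equivalence of $j' \colon N \to N/L$ --- here the inclusion $L \subseteq M$ is used. Transporting the bi-Cartesian square produced above along this isomorphism turns it into a bi-Cartesian square of the form \eqref{eq: bi-cat} with $A = M$, $B = N$, $A' = M'$, $B' = N'$, whose bottom and right edges are the given maps $i'$ and $j'$, whose top edge $i$ lies in $\MM$, and whose left edge (the transported $j$) lies in $\EE$; this is the desired completion of \eqref{eq: pro4}.

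I do not expect a genuine obstacle here beyond bookkeeping: the two points that need care are the elementary observation that a submodule of $M$ which is saturated in the ambient module $N$ is still saturated in $M$, and the well-definedness and injectivity of the identification $M/L \cong M'$. All of the substantive content --- that the pullback square constructed here is simultaneously a pushout --- is already packaged in Lemma \ref{lemma: bi-Cartesian lemma}; if one preferred not to invoke it, one would instead verify the universal property of the pushout directly, which is the same computation carried out in the proof of that lemma.
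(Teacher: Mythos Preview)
Your proposal is correct and follows essentially the same path as the paper: define $M=(j')^{-1}(M')$, observe via Lemma \ref{lemma: saturation inverse} that $M$ is saturated in $N$ so $i\in\MM$, note $L\subseteq M$ with $L$ saturated in $M$, and then invoke Lemma \ref{lemma: bi-Cartesian lemma} for the bi-Cartesian property. You are in fact slightly more explicit than the paper about the identification $M/L\cong M'$ and about why saturation of $L$ in $N$ descends to saturation in $M$, but the argument is the same.
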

\begin{proof}
Let $M= (j')^{-1}(i'(M'))$. Since $i'(M')$ is a saturated submodule of $N'$, it follows from Lemma \ref{lemma: saturation inverse}, $M$ is a saturated submodule of $N$. In particular the inclusion $i:M \to N$ is a normal monomorphism. 
	
We also have a canonical surjective homomorphism $\pi:M \to M'$ sending any $m \in M$ to $((i')^{-1}j'i)(m)$. Note that $\pi$ is well-defined since $j'i(M) \subseteq i'(M')$ by definition. We claim that $\pi$ is a normal epimorphism. In fact, since $j':N \to N'$ is a normal epimorphism, we may assume that $j':N \to N/L$, where $L$ is a saturated submodule of $N$. Then, by definition of $M$, we have $L\subseteq M$. In particular, we may assume that
\[
\pi:M \to M'=M/L 
\]
and hence $\pi$ is a normal epimorphism since $L$ is a saturated submodule of $M$. Therefore, we can complete the diagram \eqref{eq: pro4} as follows:
	\begin{equation}\label{eq: Bicar}
	\begin{tikzcd}
	M
	\ar[r,"i",hook]
	\ar[d,"\pi",two heads,swap]
	&
	N
	\ar[d,"j'",two heads]
	\\
	M/L
	\ar[r,"i'",hook]
	&
	N/L
	\end{tikzcd}
	\end{equation}
Now it follows from Lemma \ref{lemma: bi-Cartesian lemma} that \eqref{eq: Bicar} is a bi-Cartesian diagram.
\end{proof}

\begin{lem}\label{lemma: eq5}
	Every diagram in $\Mod_R$
	\begin{equation}\label{eq: pro5}
	\begin{tikzcd}
	M \ar["j",d,two heads,swap] \ar["i",r,hook] & N\\
	M'  & {}
	\end{tikzcd}
	\end{equation}
	with $i \in \MM$ and $j \in \EE$ can be completed to a
	bi-Cartesian square \eqref{eq: bi-cat} with $i' \in \MM$ and
	$j' \in \EE$.
\end{lem}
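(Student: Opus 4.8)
The plan is to deduce this directly from Lemma \ref{lemma: bi-Cartesian lemma}, in the same spirit as the proof of Lemma \ref{lemma: pro4}, after first rewriting the given data in the quotient form supplied by Lemma \ref{lemma: normal modules iff}.

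First I would invoke Lemma \ref{lemma: normal modules iff}(2) to assume, without loss of generality, that $M' = M/L$ for some saturated submodule $L$ of $M$ and that $j \colon M \to M/L$ is the natural projection; simultaneously, Lemma \ref{lemma: normal modules iff}(1) lets me identify $M$ with the saturated submodule $i(M)$ of $N$, so that $i$ becomes an inclusion. Then I would simply set $N' := N/L$, take $j' \colon N \to N/L$ to be the natural projection, and let $i' \colon M/L \to N/L$ be the morphism induced by the inclusion $i$. With these choices the square to be constructed is precisely the diagram \eqref{eq: Bicart} appearing in Lemma \ref{lemma: bi-Cartesian lemma} for this particular $L$, so that lemma finishes everything at once: part (1) gives $j' \in \EE$, part (2) gives $i' \in \MM$, and part (3) gives that the square is bi-Cartesian in $\Mod_R$ — hence it commutes and completes \eqref{eq: pro5}.

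I do not expect a genuine obstacle here; the substance is entirely contained in Lemmas \ref{lemma: normal modules iff} and \ref{lemma: bi-Cartesian lemma}. The only point worth a sentence is that $i'$ is well-defined and injective on the quotients and that $N' = N/L$ makes sense as an object of $\Mod_R$: this holds because $L \subseteq M$ and $L$, being saturated in $M$ with $M$ saturated in $N$ (as $i \in \MM$), is itself a saturated submodule of $N$ — a transitivity fact already verified inside the proof of Lemma \ref{lemma: bi-Cartesian lemma}. Thus the proof reduces to assembling these two citations after the normalization step.
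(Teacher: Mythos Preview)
Your proposal is correct and follows essentially the same approach as the paper: normalize via Lemma \ref{lemma: normal modules iff} to write $j$ as the projection $M \to M/L$, take $N' = N/L$ with the induced maps, and then invoke Lemma \ref{lemma: bi-Cartesian lemma} to conclude that $i' \in \MM$, $j' \in \EE$, and the square is bi-Cartesian. The paper also remarks (in a footnote) that $L$ is saturated in $N$, just as you do.
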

\begin{proof}
Since $j$ is a normal epimorphism, we may assume that $j:M \to M'=M/L$, where $L$ is a saturated submodule of $M$. We let $j':N \to N/L$ be a natural projection map, which is clearly a normal epimorphism since $L$ is also a saturated submodule of $N$.\footnote{Since $M$ is a saturated submodule of $N$, if $x,y \in N$ and $x+y,y \in L$, then $x+y,y \in M$. So, this follows from the fact that $L$ is a saturated submodule of $M$.} As in the proof of Lemma \ref{lemma: bi-Cartesian lemma}, $i$ induces a natural map
\[
i': M/L \to N/L
\]
which is a normal monomorphism. In particular, we can complete the diagram \eqref{eq: pro5} as follows:
\begin{equation}\label{eq: propro}
	\begin{tikzcd}
	M
	\ar[r,"i",hook]
	\ar[d,"\pi",two heads,swap]
	&
	N
	\ar[d,"j'",two heads]
	\\
	M/L
	\ar[r,"i'",hook]
	&
	N/L
	\end{tikzcd}
\end{equation}
Now, it follows from Lemma \ref{lemma: bi-Cartesian lemma} that \eqref{eq: propro} is a bi-Cartesian diagram. 
\end{proof}

\begin{mythm}\label{theorem: main theorem for modules}
Let $R$ be a semiring. Then the triple $(\Mod_R,\MM,\EE)$ as in Definition \ref{definition: proto semiring module} defines a proto-exact category.
\end{mythm}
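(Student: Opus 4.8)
The plan is to verify the five axioms of Definition \ref{definition: proto_exact} for the triple $(\Mod_R,\MM,\EE)$. Axioms (1) and (2) are immediate from the material already assembled: the inclusion $0\hookrightarrow A$ is the equalizer of $\id_A$ and the zero map, hence lies in $\MM$, and $A\twoheadrightarrow 0$ is the coequalizer of $\id_A$ and the zero map (that is, the projection $A\to A/A$), hence lies in $\EE$; containment of all isomorphisms and closure under composition are exactly Lemma \ref{lemma: composition lemma}. Axioms (4) and (5) are precisely Lemmas \ref{lemma: pro4} and \ref{lemma: eq5}. Thus the only substantive step left is axiom (3): for a commutative square as in \eqref{eq: bi-cat} with $i,i'\in\MM$ and $j,j'\in\EE$, Cartesian if and only if co-Cartesian.

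For axiom (3) the strategy is to reduce an arbitrary such square to the normalized bi-Cartesian square of Lemma \ref{lemma: bi-Cartesian lemma} and then invoke uniqueness of pushouts/pullbacks. Suppose first the square is co-Cartesian. By Lemma \ref{lemma: normal modules iff} we may replace the square by an isomorphic one in which $i$ is the inclusion of a saturated submodule $M\subseteq N$ and $j\colon M\to M/L$ is the projection onto the quotient by a saturated submodule $L$ of $M$; as in the footnote to Lemma \ref{lemma: eq5}, $L$ is then also saturated in $N$. Lemma \ref{lemma: bi-Cartesian lemma} produces a bi-Cartesian — in particular co-Cartesian — square with the same span $M/L\xleftarrow{j} M\xrightarrow{i} N$ and terminal vertex $N/L$. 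Since a pushout is unique up to a unique compatible isomorphism, the canonical map $N/L\to N'$ is an isomorphism intertwining all the structure maps, so the given square is isomorphic (as a square) to the square of Lemma \ref{lemma: bi-Cartesian lemma}; being isomorphic to a bi-Cartesian square it is itself bi-Cartesian, in particular Cartesian.

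Conversely, suppose the square is Cartesian. By Lemma \ref{lemma: normal modules iff} we may assume $j'\colon N\to N/L'$ is the projection onto the quotient by a saturated submodule $L'\subseteq N$, and that $i'$ identifies $M'$ with a saturated submodule $i'(M')\subseteq N/L'$. Since the square is a pullback and $i'$ is injective, $M=(j')^{-1}(i'(M'))$, which is a saturated submodule of $N$ by Lemma \ref{lemma: saturation inverse}, with $i$ its inclusion; moreover $L'=(j')^{-1}(0)\subseteq M$, so $L'$ is a saturated submodule of $M$. Apply Lemma \ref{lemma: bi-Cartesian lemma} with this $L'$ to get a bi-Cartesian square $M\hookrightarrow N$, $M/L'\to N/L'=N'$. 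Surjectivity of $j'$ gives $j'(M)=i'(M')$, so the induced normal monomorphism $M/L'\to N'$ has image $i'(M')$; composing with $(i')^{-1}$ yields an isomorphism $\phi\colon M/L'\to M'$, and commutativity of the given square together with injectivity of $i'$ forces $\phi$ to carry the projection $M\to M/L'$ to $j$. Hence the given square is again isomorphic (as a square, via $\phi$ on the lower-left corner) to a square of Lemma \ref{lemma: bi-Cartesian lemma}, so it is bi-Cartesian, in particular co-Cartesian.

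I expect the bookkeeping in the Cartesian $\Rightarrow$ co-Cartesian direction — recovering $L'$ as a submodule of $M$ and checking that the isomorphism $M/L'\cong M'$ intertwines $j$, $i'$, and the projection — to be the main, though entirely routine, obstacle; the co-Cartesian $\Rightarrow$ Cartesian direction drops out immediately from uniqueness of pushouts once the given span is recognized inside Lemma \ref{lemma: bi-Cartesian lemma}. With axiom (3) in hand, the theorem follows by assembling Lemmas \ref{lemma: composition lemma}, \ref{lemma: pro4}, \ref{lemma: eq5}, and \ref{lemma: bi-Cartesian lemma}.
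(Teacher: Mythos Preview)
Your proposal is correct and follows essentially the same route as the paper: axioms (1), (2), (4), (5) are handled via the same lemmas, and your treatment of axiom (3) by normalizing the square and comparing to the canonical bi-Cartesian square of Lemma~\ref{lemma: bi-Cartesian lemma} is sound. The paper dispatches axiom (3) more economically, observing that it is a \emph{formal} consequence of (4), (5), and uniqueness of pullbacks/pushouts: given a Cartesian square, apply (4) to its cospan to produce a bi-Cartesian completion, which by uniqueness of pullbacks must coincide with the given square; dually for the co-Cartesian case using (5). Your explicit normalization is exactly what underlies Lemmas~\ref{lemma: pro4} and~\ref{lemma: eq5}, so you are in effect reproving those lemmas inside the verification of (3) rather than invoking them abstractly.
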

\begin{proof}
$\Mod_R$ is clearly pointed. The axioms $(2),(4)$, and $(5)$ in Definition \ref{definition: proto_exact} directly follow from Lemmas \ref{lemma: composition lemma}, \ref{lemma: pro4}, and \ref{lemma: eq5}. Finally, the axiom $(3)$ is a direct consequence of $(4)$ and $(5)$ along with the uniqueness of pullbacks and pushouts. 
\end{proof}

A finitely generated module $N$ over a semiring $R$ is said to be \emph{Noetherian} if any submodule of $N$ is also finitely generated. The following is obvious from Theorem \ref{theorem: main theorem for modules}. 

\begin{cor}
Let $R$ be a semiring such that any finitely generated module $N$ over $R$ is Noetherian.\footnote{For instance, this is true when $R=\mathbb{B}$.} Let $\Mod_R^{\emph{fg}}$ be a subcategory of $\Mod_R$ consisting of finitely generated $R$-modules with the same exact structure, then $\Mod_R^{\emph{fg}}$ is also a proto-exact category. 
\end{cor}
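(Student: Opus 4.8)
The plan is to deduce the corollary from Theorem \ref{theorem: main theorem for modules} by checking that the proto-exact structure on $\Mod_R$ restricts to the full subcategory $\Mod_R^{\mathrm{fg}}$ of finitely generated modules. Since $\Mod_R^{\mathrm{fg}}$ is pointed (the zero module is finitely generated) and the classes $\MM,\EE$ already contain all isomorphisms and are closed under composition, condition (2) of Definition \ref{definition: proto_exact} is inherited for free. The substantive points are: (a) the admissible monos and epis between finitely generated modules involve only finitely generated modules, so the ``induced exact structure'' makes sense; and (b) the bi-Cartesian completions demanded by (4) and (5) can be performed inside $\Mod_R^{\mathrm{fg}}$.

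First I would establish the stability statement (a). If $i\colon M\hookrightarrow N$ is a normal monomorphism with $N$ finitely generated, then by Lemma \ref{lemma: normal modules iff} we may identify $M$ with a saturated submodule of $N$, which is finitely generated by the Noetherian hypothesis. Dually, if $f\colon M\twoheadrightarrow N$ is a normal epimorphism with $M$ finitely generated, then $N$ is a quotient of $M$, hence finitely generated. So for objects of $\Mod_R^{\mathrm{fg}}$ the admissible monos and epis are exactly the restrictions of $\MM$ and $\EE$, and these contain isomorphisms and are closed under composition by Lemma \ref{lemma: composition lemma}.

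Next I would revisit the constructions in Lemmas \ref{lemma: pro4} and \ref{lemma: eq5}. For condition (5), given the diagram \eqref{eq: pro5} with $M,M',N$ finitely generated, the completion in Lemma \ref{lemma: eq5} produces $N/L$, a quotient of the finitely generated module $N$, hence finitely generated; all four objects of the resulting bi-Cartesian square lie in $\Mod_R^{\mathrm{fg}}$. For condition (4), given the diagram \eqref{eq: pro4} with $M',N,N'$ finitely generated, the completion in Lemma \ref{lemma: pro4} is $M=(j')^{-1}(i'(M'))$, a saturated submodule of the finitely generated module $N$; here the Noetherian hypothesis is precisely what guarantees $M$ is finitely generated, while the remaining object $M/L\cong M'$ is finitely generated by assumption. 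Finally, a bi-Cartesian square in $\Mod_R$ all of whose vertices are finitely generated is a fortiori a pullback and a pushout in the full subcategory $\Mod_R^{\mathrm{fg}}$ (the universal property is tested against fewer objects), so condition (3) in $\Mod_R^{\mathrm{fg}}$ follows from (4) and (5) together with uniqueness of pullbacks and pushouts, exactly as in the proof of Theorem \ref{theorem: main theorem for modules}.

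The only place where the hypothesis is genuinely invoked — and hence the main (really the sole) obstacle — is the pullback completion: without assuming finitely generated modules are Noetherian, the saturated submodule $(j')^{-1}(i'(M'))$ of a finitely generated module need not be finitely generated, and $\Mod_R^{\mathrm{fg}}$ would fail to be closed under the bi-Cartesian completion required by condition (4). Every other part of the structure is inherited verbatim from $\Mod_R$, so once this point is dispatched the proof is complete.
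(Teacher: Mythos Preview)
Your argument is correct and is exactly the elaboration the paper leaves implicit: the paper states only that the corollary ``is obvious from Theorem \ref{theorem: main theorem for modules}'' and gives no further proof. Your identification of the Noetherian hypothesis as being needed precisely for the pullback completion in condition (4) (to ensure the saturated submodule $(j')^{-1}(i'(M'))$ remains finitely generated) is the right diagnosis of where the assumption enters.
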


\begin{rmk}
When one has a proto-exact category $\mathcal{C}$, then one may study the Hall algebra $H_\mathcal{C}$ of $\mathcal{C}$ or develop $K$-theory of $\mathcal{C}$. When $\mathcal{C}$ is the category of matroids, these two applications were considered in \cite{eppolito2018proto}. In this paper, with these two applications in mind, we equip the category of modules over a semiring with a proto-exact structure. In \cite{jun2020vector}, some ingredients of $K$-theory for semirings were introduced without appealing to any proto-exact structure. 
\end{rmk}


\section{Algebraic lattices, $\mathbb{B}$-modules, and matroids} \label{section: main lattices} \label{section: lattice section}

In this section, we focus on the case when $R=\mathbb{B}$, the Boolean semifield. It is well-known that any finite $\mathbb{B}$-module is naturally equipped with a lattice structure.\footnote{To be precise, any $\mathbb{B}$-module defines a join semilattice.} See, for instance, \cite[\S 2]{les3}.

We prove that the category $\mathcal{L}$ of algebraic lattices is proto-exact, and explore certain connections to $\mathbb{B}$-modules and the subcategory of geometric lattices (i.e.~simple matroids).	

\subsection{Algebraic lattices}

In this subsection, we prove that the category of algebraic lattices is equipped with a proto-exact structure. We first recall some necessary definitions. 

\begin{mydef}
Let $L$ be a lattice.
\begin{enumerate}
	\item 
By a \emph{cover} of an element $x \in L$, we mean a family of elements $\{y_i\}_{i \in I}$ such that 
\[
x \leq \underset{i \in I}{\vee}~ y_i.
\]
\item 
For a complete lattice $L$, an element $x \in L$ is said to be \emph{compact} if every cover of $x$ has a finite subcover.
\item 
An \emph{algebraic lattice} is a complete lattice such that any element is a join of compact elements. 
\end{enumerate}
\end{mydef}

 Note that any finite lattice is algebraic, but we will use the term ``finite algebraic lattices'' to emphasize that we work with algebraic lattices in general rather than lattices. 

\begin{rmk}
Equivalently, one may define an algebraic lattice as a poset which is locally finitely presentable as a category.
\end{rmk}

\begin{mydef}\label{defnition: morphis of algebraic lattices}
Let $L_1$ and $L_2$ be algebraic lattices. A function $f:L_1 \to L_2$ is said to be a \emph{morphism of algebraic lattices} if the following hold:
\begin{enumerate}
\item 
$f$ preserve arbitrary joins: for any $\{y_i\}_{i \in I}  \subseteq L_1$ we have
\[
f(\underset{i \in I}{\vee}~ y_i)=\underset{i \in I}{\vee}~ f(y_i).
\]
\item 
If $x \in L_1$ is compact, then $f(x)$ is compact element in $L_2$.
\end{enumerate}
\end{mydef} 

\begin{rmk}
We note that in our definition of morphisms for algebraic lattices, we do not require them to preserve meets. In particular, a morphism of algebraic lattices as in Definition \ref{defnition: morphis of algebraic lattices} does not have to be a morphism of underlying lattices. 
\end{rmk}

\begin{mydef}\label{definition: normal maps for lattices}
Let $L_1$ and $L_2$ be algebraic lattices and $f:L_1 \to L_2$ be a morphism of algebraic lattices as in Definition \ref{defnition: morphis of algebraic lattices}.  
\begin{enumerate}
\item 
$f$ is said to be a \emph{normal monomorphism} if
\begin{enumerate}
	\item
 $f$ is an injection;
 \item 
 $f(L_1)$ is downward closed, i.e., $\forall~x \in f(L_1)$ and $\forall~y \in L_2$ if $y \leq x$ then $y \in f(L_1)$.
\end{enumerate}
\item 
$f$ is said to be a \emph{normal epimorphism} if there exists $x_0 \in L_1$ such that 
\begin{enumerate}
	\item
$L_2 \simeq \{y \in L_1\mid y \geq x_0\}$;
\item 
$f:L_1 \to L_2$ is given by joining  with $x_0$ under the above isomorphism. 
\end{enumerate}
\end{enumerate}
\end{mydef}

\begin{mydef}\label{definition: algebraic sublattice}
Let $L$ be an algebraic lattice. A subset $K \subseteq L$ is said to be an \emph{algebraic sub-lattice} if $K$ itself is an algebraic lattice and the inclusion $i:K \to L$ is a normal monomorphism. 
\end{mydef}

Let $L$ be an algebraic lattice and $K$ be an algebraic sub-lattice of $L$. Let $1_K$ be the unique maximal element of $K$. By the quotient lattice $L/K$, we mean the following algebraic sub-lattice of $L$:
\[
L/K:=\{x \in L \mid x\geq 1_K\}.
\]
The canonical map $f:L \to L/K$ sending $x$ to $(x\vee 1_K)$ is a normal epimorphism. 

\begin{mydef}\label{definition: algebraic lattice = proto}
Let $\mathcal{L}$ be the category of algebraic lattices. Let $\mathfrak{M}$ be the class of normal monomorphisms in $\mathcal{L}$ and $\mathfrak{E}$ the class of normal epimorphisms in $\mathcal{L}$.
\end{mydef}

\begin{lem}\label{lemma: composition lemma lattice}
$\MM$ and $\EE$ in Definition \ref{definition: normal maps for lattices} are closed under composition and contain all isomorphisms.
\end{lem}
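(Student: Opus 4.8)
The plan is to verify the two assertions of Lemma \ref{lemma: composition lemma lattice} separately, handling normal monomorphisms and normal epimorphisms in turn, and in each case reducing to the concrete descriptions in Definition \ref{definition: normal maps for lattices}. That isomorphisms lie in $\MM$ and $\EE$ is immediate: an isomorphism of algebraic lattices is a bijection preserving arbitrary joins and compactness, its image is all of $L_2$ (hence trivially downward closed), and it realizes $L_2 \simeq \{y \in L_1 \mid y \geq 0_{L_1}\}$ with $x_0 = 0_{L_1}$, so it is simultaneously a normal monomorphism and a normal epimorphism.

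For composition of normal monomorphisms, suppose $f \colon L_1 \to L_2$ and $g \colon L_2 \to L_3$ are normal monos. The composite $gf$ is an injection, and it preserves arbitrary joins and sends compact elements to compact elements since $f$ and $g$ do, so $gf$ is a morphism of algebraic lattices. For the downward-closed condition, take $x \in (gf)(L_1)$ and $y \in L_3$ with $y \leq x$. Since $g(L_2)$ is downward closed in $L_3$ and $x \in g(L_2)$, we get $y \in g(L_2)$, say $y = g(y')$ with $y' \in L_2$; because $g$ is an injection that preserves joins it reflects the order (if $g(a) \leq g(b)$ then $g(a \vee b) = g(b)$, so $a \vee b = b$, i.e. $a \leq b$), hence $y' \leq f(x')$ where $x \in f(L_1)$ is such that $g(f(x')) = x$; now $f(L_1)$ downward closed gives $y' \in f(L_1)$, so $y \in (gf)(L_1)$. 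Thus $gf \in \MM$.

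For composition of normal epimorphisms, suppose $f \colon L_1 \to L_2$ is a normal epi determined by $x_0 \in L_1$ (so $L_2 \simeq {\uparrow} x_0 := \{y \in L_1 \mid y \geq x_0\}$ and $f$ is $x \mapsto x \vee x_0$), and $g \colon L_2 \to L_3$ is a normal epi determined by $x_1 \in L_2$. Under the identification $L_2 = {\uparrow} x_0$, write $x_1$ as an element $\widetilde{x}_1 \in L_1$ with $\widetilde{x}_1 \geq x_0$. Then ${\uparrow}_{L_2} x_1 = \{y \in L_1 \mid y \geq \widetilde{x}_1\} = {\uparrow}_{L_1} \widetilde{x}_1$, and the composite $gf$ sends $x \in L_1$ to $(x \vee x_0) \vee \widetilde{x}_1 = x \vee \widetilde{x}_1$ (using $\widetilde{x}_1 \geq x_0$). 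Hence $gf$ is exactly the normal epimorphism determined by $\widetilde{x}_1 \in L_1$, so $gf \in \EE$.

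The only point requiring a little care is the interaction of the abstract "isomorphic to ${\uparrow} x_0$" formulation with the concrete join-with-$x_0$ map when one composes two such maps; I expect the main (mild) obstacle to be bookkeeping the identifications so that the element $x_1 \in L_2$ is correctly transported to an element of $L_1$ above $x_0$, after which the computation $(x \vee x_0) \vee \widetilde{x}_1 = x \vee \widetilde{x}_1$ closes the argument. Everything else is a routine check that the defining properties of morphisms of algebraic lattices (preservation of arbitrary joins and of compact elements) are stable under composition, which is clear.
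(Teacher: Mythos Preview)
Your proof is correct and follows essentially the same approach as the paper's: the downward-closed check for $\MM$ is identical (the paper implicitly uses the order-reflection property you spell out), and for $\EE$ both arguments reduce the composite to ``join with a single element of $L_1$ followed by an isomorphism.'' The only cosmetic difference is that the paper keeps the isomorphisms $u\colon {\uparrow}x_1 \to L_2$ and $v\colon {\uparrow}x_2 \to L_3$ explicit throughout and writes the resulting element as $u^{-1}(x_2)\vee x_1$, whereas you absorb $u$ into an identification and acknowledge the bookkeeping in your final paragraph.
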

\begin{proof}
It is clear that $\MM$ and $\EE$ contain all isomorphisms. 

To show that $\MM$ is closed under composition, suppose that $f_1:L_1 \to L_2$ and $f_2:L_2 \to L_3$ are normal monomorphisms and $g:=f_2f_1$. Clearly, $g$ is an inclusion. Suppose further that $x \in g(L_1)$, $y \in L_3$, and $y \leq x$. We write $x=f_2(f_1(z))$. Since $y \leq f_2(f_1(z))$ and $f_2$ is a normal monomorphism, we have that $y \in f_2(L_2)$. Let $y=f_2(t)$ for some $t \in L_2$. Then, we have $f_2(t)=y \leq x=f_2(f_1(z))$. Since $f_2$ is an inclusion, we have $t \leq f_1(z)$. Again, since $f_1(L_1)$ is downward closed, we have that $t \in f_1(L_1)$, and hence $y=f_2(t) \in g(L_1)$. 

To show that $\EE$ is closed under composition, suppose that $f_1:L_1 \to L_2$ and $f_2:L_2 \to L_3$ are normal epimorphisms. From the definition, we may assume that $f_1$ is the composition of the map $L_1 \to L_1'=\{x \in L_1 \mid x \geq x_1\}$ given by joining with some element $x_1$ and an isomorphism $u:L_1' \to L_2$. Similarly, $f_2$ is the composition of the map $L_2 \to L_2'$ given by joining with some element $x_2$ and an isomorphism $v: L_2' \to L_3$. Then we have
\[
f_2(f_1(x))=f_2 ( u(x_1 \vee x)) = v(x_2 \vee u(x_1 \vee x))=v ( u (u^{-1}(x_2) \vee x_1 \vee x)).
\]
Thus $f_2f_1$ is the composition of the join map of $u^{-1}(x_2)\vee x_1$ with the map $vu: u^{-1}(L_2) \to L_3$.\footnote{Of course, we have to restrict the domain of $u$ in order for $vu$ to make sense.} $vu$ is an isomorphism because both $v$ and the map $u:u^{-1}(L_2) \to L_2$ are isomorphisms. Now, it follows from the definition that $f_2f_1$ is a normal epimorphism. 
\end{proof}

\begin{lem}\label{lemma: common lemma}
Let $M$ be an algebraic lattice and $L$ an algebraic sub-lattice of $M$. Then, any compact element in $M/L$ is the join of a compact element of $M$ with $1_L$.
\end{lem}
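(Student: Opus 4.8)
The statement to prove is: if $M$ is an algebraic lattice and $L$ an algebraic sub-lattice of $M$, then every compact element of $M/L = \{x \in M \mid x \geq 1_L\}$ has the form $c \vee 1_L$ for some compact $c \in M$. The plan is to start from an arbitrary compact element $z \in M/L$, express it inside the ambient lattice $M$ using the algebraicity of $M$, and then push the compactness of $z$ (as an element of the sub-poset $M/L$) back down to a compactness statement in $M$ for a suitable element.

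\textbf{Key steps.} First I would write $z = \bigvee_{i \in I} c_i$ where each $c_i$ is compact in $M$; this is possible because $M$ is algebraic. Joining with $1_L$ (which is a no-op since $z \geq 1_L$) gives $z = \bigvee_{i \in I} (c_i \vee 1_L)$. Now here is the point where I must be careful: joins in $M/L$ need not coincide with joins in $M$ in general, but for an \emph{upward-closed} subset like $M/L$ the join of a family already lying above $1_L$ is the same whether computed in $M$ or in $M/L$ — the $M$-join of elements $\geq 1_L$ is again $\geq 1_L$ and is clearly the least upper bound in $M/L$ as well. So $\{c_i \vee 1_L\}_{i \in I}$ is a cover of $z$ inside $M/L$. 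Since $z$ is compact in $M/L$, there is a finite subset $J \subseteq I$ with $z = \bigvee_{i \in J}(c_i \vee 1_L) = \left(\bigvee_{i \in J} c_i\right) \vee 1_L$. Setting $c := \bigvee_{i \in J} c_i$, this is a finite join of compact elements of $M$, hence compact in $M$ (finite joins of compact elements are compact: any cover of $\bigvee_{i\in J} c_i$ covers each $c_i$, yielding a finite subcover for each, and the union of these is a finite subcover of the join). Therefore $z = c \vee 1_L$ with $c$ compact in $M$, which is exactly the claim.

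\textbf{Main obstacle.} The only genuinely delicate point is the comparison of joins in $M/L$ versus in $M$, i.e. justifying that a cover in $M$ (after joining with $1_L$) is genuinely a cover in $M/L$ and that the finite subcover extracted from $M/L$-compactness really reassembles to $z$. This is straightforward once one observes $M/L$ is upward-closed in $M$, so arbitrary $M$-joins of families in $M/L$ stay in $M/L$ and agree with $M/L$-joins; I would state this observation explicitly before invoking it. Everything else — algebraicity of $M$, the fact that a finite join of compact elements is compact — is routine and I would not belabor it.
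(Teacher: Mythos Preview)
Your proof is correct and follows essentially the same route as the paper's: write the compact element as a join of compacts of $M$, join with $1_L$ to get a cover in $M/L$, extract a finite subcover by compactness, and observe that the resulting finite join of compacts is compact in $M$. You are simply more explicit than the paper about why joins in $M/L$ agree with those in $M$ and why a finite join of compact elements is compact, but the argument is the same.
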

\begin{proof}
Let $x$ be a compact element of $M/L$, and by viewing $x$ as an element of $M$, we can write it as the join of compact elements $\{y_\alpha\}$ of $M$ (where $\alpha$ ranges over some indexing set).  Then $x$ is also the join of elements of the form $y_\alpha \vee 1_L$, and since these lie in $M/L$ we can pick a finite subcover.  This allows one to write $x = y_1 \vee \dots \vee y_n \vee 1_L$, showing that $y_1 \vee \dots\vee y_n$ is compact. 		
\end{proof}

Now, we prove that $(\mathcal{L},\MM,\EE)$ is a proto-exact category. As in the case of semirings, we will interchangeably use the terms normal monomorphisms (resp.~normal epimorphisms) and admissible monomorphisms (resp.~admissible epimorphisms). We will make use of the following lemma whose proof is similar to Lemma \ref{lemma: bi-Cartesian lemma}. 

\begin{lem}\label{lemma: bi-Cartesian key lemma}
Let $M$ and $N$ be algebraic lattices and $L$ an algebraic sub-lattice of $M$. Let $i \in \mathfrak{M}$, $\pi,\pi' \in \mathfrak{E}$. Then, the induced map $i':M/L \to N/L$ is a normal monomorphism and the following commutative diagram is a bi-Cartesian diagram in $\mathcal{L}$.
	\begin{equation}\label{eq: bicar diagram lattice}
	\begin{tikzcd}
	M
	\ar[r,"i",hook]
	\ar[d,"\pi",two heads,swap]
	&
	N
	\ar[d,"\pi'",two heads]
	\\
	M/L
	\ar[r,"i'",hook]
	&
	N/L
	\end{tikzcd}
	\end{equation}
\end{lem}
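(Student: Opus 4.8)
\textbf{Proof proposal for Lemma \ref{lemma: bi-Cartesian key lemma}.}

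The plan is to follow closely the pattern of Lemma \ref{lemma: bi-Cartesian lemma}, working with the explicit descriptions of normal monomorphisms as downward closed injections and normal epimorphisms as ``join with $1_L$'' maps. First I would record the setup: since $i \colon M \to N$ is a normal monomorphism we identify $M$ with a downward closed algebraic sub-lattice of $N$; since $L$ is an algebraic sub-lattice of $M$ it is downward closed in $M$ and hence downward closed in $N$, so $L$ is an algebraic sub-lattice of $N$ as well, and $1_L$ is the common maximal element. The quotients are then the concrete intervals $M/L = \{x \in M : x \geq 1_L\}$ and $N/L = \{y \in N : y \geq 1_L\}$, with $\pi, \pi'$ the maps $x \mapsto x \vee 1_L$, and $i' \colon M/L \to N/L$ is just the inclusion. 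To check $i'$ is a normal monomorphism: it is clearly injective and a morphism of algebraic lattices (Lemma \ref{lemma: common lemma} guarantees compact elements of $M/L$ are carried to compact elements, since compactness can be detected after joining with $1_L$), and if $y \in N/L$ with $y \leq x$ for some $x \in M/L$, then downward closedness of $M$ in $N$ gives $y \in M$, and $y \geq 1_L$ already, so $y \in M/L$; hence $i'(M/L)$ is downward closed in $N/L$.

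Next I would verify that \eqref{eq: bicar diagram lattice} commutes — immediate, since both composites send $x \in M$ to $x \vee 1_L$ viewed in $N/L$ — and then check the Cartesian and co-Cartesian properties directly against test objects. For the \emph{Cartesian} part, suppose $K$ is an algebraic lattice with morphisms $\alpha \colon K \to M/L$ and $\beta \colon K \to N$ satisfying $i' \alpha = \pi' \beta$, i.e.\ $\beta(t) \vee 1_L = \alpha(t)$ in $N$ for all $t \in K$. In particular $\beta(t) \leq \alpha(t) \in M/L \subseteq M$, so downward closedness of $M$ in $N$ forces $\beta(t) \in M$; thus $\beta$ factors (as a set map) through $M$, and since $M \hookrightarrow N$ is the inclusion of a sub-lattice the corestriction $\gamma \colon K \to M$ is again a morphism of algebraic lattices. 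One checks $i\gamma = \beta$ and $\pi\gamma = \alpha$ (the latter because $\gamma(t) \vee 1_L = \beta(t) \vee 1_L = \alpha(t)$), and uniqueness of $\gamma$ follows from injectivity of $i$. For the \emph{co-Cartesian} part, suppose $\alpha \colon M/L \to K$ and $\beta \colon N \to K$ satisfy $\alpha \pi = \beta i$, i.e.\ $\alpha(m \vee 1_L) = \beta(m)$ for all $m \in M$. I want $\gamma \colon N/L \to K$ with $\gamma \pi' = \beta$ and $\gamma i' = \alpha$; there is only one candidate, namely $\gamma(y) = \beta(y)$ for $y \in N/L \subseteq N$. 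I must check this $\gamma$ is a morphism of algebraic lattices: it preserves arbitrary joins because $\beta$ does and because joins in the interval $N/L$ agree with joins in $N$; and it preserves compactness because, by Lemma \ref{lemma: common lemma}, every compact $y \in N/L$ has the form $y = n \vee 1_L$ with $n$ compact in $N$, so $\gamma(y) = \beta(n \vee 1_L) = \beta(n) \vee \beta(1_L)$, which is a join of the compact element $\beta(n)$ with $\beta(1_L) = \alpha(1_L \vee 1_L) = \alpha(1_L)$; since $1_L$ is compact in $M/L$ (it is the bottom of that lattice, hence compact) $\alpha(1_L)$ is compact, and a join of two compact elements is compact. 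Then $\gamma\pi' = \beta$ and $\gamma i' = \alpha$ hold by construction, and $\gamma$ is forced by surjectivity of $\pi'$ onto $N/L$ (every element of $N/L$ is $\pi'$ of itself).

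The main obstacle I anticipate is the bookkeeping around compactness: unlike the semiring case, where one only manipulates submodules and quotients, here the morphisms are constrained to preserve compact elements, so at each stage (checking $i'$, $\gamma$ for the pushout, $\gamma$ for the pullback) one must re-verify that the induced map still sends compact elements to compact elements. Lemma \ref{lemma: common lemma} is precisely the tool that makes this routine — it lets one pull compactness in $M/L$ or $N/L$ back to compactness in $M$ or $N$ — so the proof should go through without surprises once that lemma is invoked at the right spots. A secondary point worth stating carefully is that the interval $N/L = [1_L, 1_N]$ is itself an algebraic lattice (it is closed under arbitrary joins taken in $N$, and compact elements of the interval are detected as in Lemma \ref{lemma: common lemma}), so that all the diagrams live in $\mathcal{L}$; this is implicitly used when forming $M/L$ and $N/L$ and should be noted at the outset.
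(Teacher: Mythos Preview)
Your proposal is correct and follows essentially the same approach as the paper's proof: identify $M$ with a downward closed sub-lattice of $N$, describe the quotients as intervals $[1_L,-]$, define the filler maps $\gamma$ as (co)restrictions of $\beta$, and invoke Lemma~\ref{lemma: common lemma} to handle compactness. One small point to tighten: in the co-Cartesian part your claim that ``$\gamma$ preserves arbitrary joins because $\beta$ does and joins in $N/L$ agree with joins in $N$'' is only valid for \emph{nonempty} joins, since the empty join in $N/L$ is $1_L$ rather than $0_N$; the paper handles this separately by noting $\gamma(1_L)=\beta(1_L)=\alpha\pi(1_L)=\alpha(0_{M/L})=0_K$, which you in fact also compute (as $\beta(1_L)=\alpha(1_L)$) but only in the compactness discussion rather than for the empty-join check.
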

\begin{proof}
One can easily observe that if $i:M \to N$ is a normal monomorphism, then the induced map $i':M/L \to N/L$ is also a normal monomorphism. In fact, the only nontrivial part is to show that $i'$ preserves compact elements which directly follows from Lemma \ref{lemma: common lemma}.

(co-Cartesian) Consider the following commutative diagram, where $K$ is an algebraic lattice and $\alpha,\beta$ are morphisms in $\mathcal{L}$:
	\begin{equation}\label{eq: pushout lattice}
	\begin{tikzcd}
	M \ar[hook]{r}{i} \ar[swap,two heads]{d}{\pi} & N \ar[two heads]{d}{\pi'}
	\ar[bend left]{ddr}{\beta} &
	\\
	M/L \ar[hook]{r}{i'} \ar[swap, bend right]{rrd}{\alpha} & N/L \ar[dotted]{dr}{\gamma} &
	\\
	& & K
	\end{tikzcd}
	\end{equation}
From the definition of $N/L$, we can define the function $\gamma:N/L \to K$ such that $\gamma(x)=\beta(x)$. We first claim that $\gamma$ is a morphism of algebraic lattices as in Definition \ref{defnition: morphis of algebraic lattices}. If $\{y_i\}\subseteq N/L$, we may assume that $\{y_i\}=\{x_i\vee 1_L\}$ for some $x_i \in N$. In particular, for a nonempty join, we have
\[
\gamma(\vee y_i)=\gamma (\vee(x_i \vee 1_L))=\beta (\vee(x_i \vee 1_L)) = \vee (\beta (x_i\vee 1_L))=\vee \beta(y_i)=\vee \gamma(y_i),
\]
showing that $\gamma$ preserves nonempty joins. For the empty join, since $\beta i =\alpha \pi$ and $\pi(1_L)=0_{M/L}$, we have that $\beta(1_L)=0$. It follows from the definition that $\gamma(1_L)=\beta(1_L)=0$. In particular, since the empty join in $N/L$ is $1_L$, $\gamma$ preserves the empty join. Hence, $\gamma$ preserves arbitrary joins. Furthermore, since $\beta$ is a morphism of algebraic lattices, $\beta$ preserves compact elements. It follows from Lemma \ref{lemma: common lemma} that $\gamma$ preserves compact elements. 

Next, one may observe that $\gamma\pi'=\beta$ since $\beta(1_L)$ is the minimal element in $N/L$ and $\gamma$ is a morphism of algebraic lattices. One can also easily check that $\alpha=\gamma i'$. This shows the commutativity of \eqref{eq: pushout lattice} with $\gamma$. 

Finally, if a morphism $\delta:N/L \to K$ makes \eqref{eq: pushout lattice} to commutes, then $\delta=\gamma$. Hence \eqref{eq: bicar diagram lattice} is a co-Cartesian diagram. 
	
(Cartesian) Consider the following diagram:
\begin{equation}\label{eq: Cat lattice}
	\begin{tikzcd}
	K \ar[bend left]{drr}{\beta} \ar[swap, bend right]{ddr}{\alpha} \ar[dotted]{dr}{\gamma} & &
	\\
	&M\ar[hook]{r}{i} \ar[swap,two heads]{d}{\pi} & N \ar[two heads]{d}{\pi'}
	\\
	& M/L \ar[hook]{r}{i'} & N/L
	\end{tikzcd}
	\end{equation}
	where $K$ is an algebraic lattice and $\alpha,\beta$ are morphisms in $\mathcal{L}$. We claim that $\beta(K) \subseteq i(M)$. In fact, for any $x \in K$, we have $i'\alpha (x) = \pi'\beta(x)$. We may write $\alpha(x)=\pi(m)$ for some $m \in M$ since $\pi$ is surjective (as it is a normal epimorphism). Hence we have
	\[
	\pi'i(m)=i'\pi(m) = \pi'\beta(x).
	\]
In other words, we have
\[
\beta(x) \vee 1_L = i(m) \vee 1_L.
\]	
Now, since $L$ is an algebraic sub-lattice of $M$, we have that $i(m) \vee 1_L \in i(M)$, and hence $\beta(x) \vee 1_L \in i(M)$. However, since $i$ is a normal monomorphism, $i(M)$ is downward closed, showing that $\beta(x) \in i(M)$. Hence, the function $\gamma:K \to M$ sending $x$ to $\beta(x)$ is a well-defined morphism in $\mathcal{L}$, which makes the diagram \eqref{eq: Cat lattice} to commute. It is clear that such $\gamma$ is unique. This proves that \eqref{eq: bicar diagram lattice} is a Cartesian diagram. 
\end{proof}

\begin{lem}\label{lemma: algebraic sub-lattice}
Let $M$ be an algebraic lattice and $K$ be a subset of $M$ which is closed under arbitrary joins and also is downward closed. Then $K$ is an algebraic sub-lattice.
\end{lem}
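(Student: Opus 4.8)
The plan is to verify the two requirements of Definition~\ref{definition: algebraic sublattice} directly: that $K$, with the order induced from $M$, is an algebraic lattice, and that the inclusion $\iota\colon K\hookrightarrow M$ is a normal monomorphism in the sense of Definition~\ref{definition: normal maps for lattices}. Injectivity of $\iota$ is automatic and $\iota(K)=K$ is downward closed by hypothesis, so the real content is: (i) $K$ is a complete lattice; (ii) every element of $K$ is a join of elements compact in $K$; and (iii) $\iota$ preserves arbitrary joins and compact elements (Definition~\ref{defnition: morphis of algebraic lattices}).

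For (i): since $K$ is closed under arbitrary joins (including the empty one, whence $0_M\in K$) and the order on $K$ is induced from $M$, joins of subsets of $K$ are computed as in $M$; a poset closed under all joins is automatically a complete lattice, so $K$ is complete, its top element being $\bigvee_M K$ (which need not equal $1_M$).

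The crux is the comparison of compact elements, and here one must be careful: since $M$ need not be distributive, one cannot replace a cover $\{y_i\}$ of $x$ by $\{y_i\wedge x\}$. Instead I would argue through the compact elements of $M$. First, if $c\in K$ is compact in $M$ then $c$ is compact in $K$, because a cover of $c$ by members of $K$ is also a cover in $M$ and hence has a finite subcover. Conversely, let $x\in K$ be compact in $K$. Since $M$ is algebraic, $x=\bigvee\{c\mid c\text{ compact in }M,\ c\le x\}$; each such $c$ lies in $K$ by downward closure, so this family is a cover of $x$ inside $K$, and by compactness finitely many of the $c$ already have join $x$. Being a finite join of compact elements of $M$, $x$ is then compact in $M$. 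Hence the compact elements of $K$ are exactly the elements of $K$ that are compact in $M$.

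This immediately yields (ii) and (iii). For (ii), any $x\in K$ is the join of the compact elements of $M$ lying below it, and those are precisely compact elements of $K$, so $K$ is algebraic. For (iii), $\iota$ preserves joins because joins in $K$ and in $M$ agree, and it preserves compact elements by the characterization above. Therefore $\iota$ is a normal monomorphism and $K$ is an algebraic sub-lattice. The only genuine obstacle is the implication that an element compact in $K$ is compact in $M$: a naive distributivity-style argument fails here, and one handles it instead by descending to the compact generators of $M$ below $x$, which automatically belong to $K$.
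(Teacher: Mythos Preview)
Your argument is correct, and in fact more careful than the paper's own proof at the crucial point. Both proofs agree on the easy parts: completeness of $K$, that $M$-compacts below $1_K$ are $K$-compacts, and that joins agree. The divergence is in showing that an element $x$ compact in $K$ is compact in $M$.

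The paper's proof takes a cover $\{y_i\}$ of $x$ in $M$ and asserts that $\{y_i\wedge 1_K\}$ is a cover of $x$ in $K$; from a finite subcover of the latter it extracts one of the former. But the claim that $\{y_i\wedge 1_K\}$ covers $x$ amounts to $x\le\bigvee_i(y_i\wedge 1_K)$, which does not follow from $x\le\bigvee_i y_i$ and $x\le 1_K$ without distributivity. For instance, in the diamond $M_3=\{0,a,b,c,1\}$ with $K=\{0,a\}$, the family $\{b,c\}$ covers $a$ in $M$, yet $\{b\wedge a,\,c\wedge a\}=\{0\}$ is not a cover of $a$ in $K$. So the paper's step is unjustified in general.

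Your route avoids this entirely: you write $x$ as the join of the $M$-compacts below it (available because $M$ is algebraic), observe these lie in $K$ by downward closure, apply compactness of $x$ in $K$ to extract a finite subfamily with the same join, and conclude $x$ is a finite join of $M$-compacts, hence $M$-compact. This is exactly the right fix, and your closing remark that ``a naive distributivity-style argument fails here'' pinpoints precisely where the paper's version goes wrong.
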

\begin{proof}
To show that $K$ is a complete lattice,  it is well-known that one only needs to show that $K$ contains arbitrary joints. In particular, it follows from our assumption that $K$ is a complete lattice. Furthermore, every element of $K$ is a join of compact elements of $M$; since $K$ is downward closed, these are elements of $K$ as well (and are easily seen to be compact inside $K$). So $K$ is an algebraic lattice.

Next, we prove that the inclusion is a normal monomorphism. If $x \in K$ is compact and the $S=\{y_i\}_{i \in I}$ is a cover of $x$ in $M$, then $S'=\{y_i \wedge 1_K\}$ is a cover of $x$ in $K$. By picking a finite subcover of $S'$, we obtain a finite subcover of $S$ for $x$ in $M$. It follows that $x$ is compact in $M$, so the inclusion preserves compactness.  Since it clearly preserves joins, it is a morphism of algebraic lattices. Finally, since $K$ is downward closed, it is an algebraic sub-lattice.
\end{proof}

\begin{lem}\label{lemma: sub-lattice lemma}
Let $f:M \to N$ be a morphism in $\mathcal{L}$. If $L$ is an algebraic sub-lattice of $N$ (as in Definition \ref{definition: algebraic sublattice}), then $f^{-1}(L)$ is an algebraic sub-lattice of $M$.
\end{lem}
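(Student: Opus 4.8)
The plan is to reduce the statement to Lemma \ref{lemma: algebraic sub-lattice}: it suffices to prove that $K := f^{-1}(L)$ is closed under arbitrary joins formed in $M$ and is downward closed in $M$, and then that lemma immediately yields that $K$ is an algebraic sub-lattice.

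First I would record two easy preliminary observations. (i) Any morphism of algebraic lattices is order-preserving: if $y \le x$ in $M$, then $y \vee x = x$, and applying $f$ (which preserves finite joins) gives $f(y) \vee f(x) = f(x)$, i.e. $f(y) \le f(x)$. (ii) Since the inclusion $i : L \hookrightarrow N$ is a normal monomorphism, it is in particular a morphism of algebraic lattices preserving arbitrary joins with downward closed image; hence the join in $L$ of any family agrees with its join computed in $N$. Consequently $L$ is closed under arbitrary joins formed in $N$, and (taking the empty join) contains $0_N$.

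Now the two required properties of $K$ follow directly. \emph{Downward closure:} if $x \in K$ and $y \le x$ in $M$, then $f(y) \le f(x) \in L$, and since $L$ is downward closed in $N$ we get $f(y) \in L$, so $y \in K$. \emph{Closure under joins:} given any family $\{y_i\}_{i \in I} \subseteq K$ (allowing $I = \emptyset$), we have $f\bigl(\bigvee_{i \in I} y_i\bigr) = \bigvee_{i \in I} f(y_i)$ because $f$ preserves arbitrary joins, and the right-hand side lies in $L$ by observation (ii); hence $\bigvee_{i \in I} y_i \in K$. Applying Lemma \ref{lemma: algebraic sub-lattice} to $K \subseteq M$ then finishes the proof.

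I do not expect a genuine obstacle here: the only points needing a little care are that morphisms of algebraic lattices are monotone and that joins computed inside the sub-lattice $L$ coincide with those computed in $N$ (so that ``closed under joins in $N$'' is exactly the hypothesis needed for Lemma \ref{lemma: algebraic sub-lattice}), together with the trivial bookkeeping for the empty join / bottom element. One could instead verify Definition \ref{definition: algebraic sublattice} by hand — checking that $K$ is a complete lattice, that its compact elements are inherited from $M$, and that $i : K \to M$ is a normal monomorphism — but routing through Lemma \ref{lemma: algebraic sub-lattice} avoids re-proving the compactness statement established there.
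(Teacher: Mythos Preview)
Your proof is correct and follows essentially the same approach as the paper: both reduce to Lemma \ref{lemma: algebraic sub-lattice} and then verify that $f^{-1}(L)$ is closed under arbitrary joins (using that $f$ preserves joins and that joins in $L$ agree with joins in $N$) and is downward closed (using that $f$ is monotone and $L$ is downward closed). Your write-up is slightly more explicit about the two preliminary observations, but the argument is the same.
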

\begin{proof}
From Lemma \ref{lemma: algebraic sub-lattice}, we only have to prove that the inclusion $i:f^{-1}(L) \to M$ is closed under joins and is downward closed. 

All joins will take place in $M,N,$ or $L$ (there is no ambiguity since joins in $L$ and $N$ are the same). If $\{x_i\}_{i \in I} \subseteq f^{-1}(L)$, then we have 
\[
f(\vee x_i)=\vee f(x_i) \in L.
\]
Hence $f^{-1}(L)$ is closed under joins.

To check that $f^{-1}(L)$ is downward closed, suppose that $y \in f^{-1}(L)$ and $x\leq y$.  Then $f(x)\leq f(y)$, and hence $f(x) \in L$, showing that $f^{-1}(L)$ is downward closed. 
\end{proof}

\begin{lem}\label{lemma: pro4 lattice}
Every diagram in $\mathcal{L}$
\begin{equation}\label{eq: pro4 lattice}
\begin{tikzcd}
{} & N\ar["j'",d,two heads] \\
M' \ar["i'",r,hook,swap] & N' 
\end{tikzcd}
\end{equation}
with $i' \in \MM$ and $j' \in \EE$ can be completed to a
bi-Cartesian square \eqref{eq: bi-cat} with $i \in \MM$ and
$j \in \EE$.
\end{lem}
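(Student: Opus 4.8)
The plan is to mimic the proof of Lemma~\ref{lemma: pro4}, systematically replacing ``saturated submodule'' by ``algebraic sub-lattice'' and invoking Lemma~\ref{lemma: bi-Cartesian key lemma} in place of Lemma~\ref{lemma: bi-Cartesian lemma}.

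First, since $j' \in \EE$, Definition~\ref{definition: normal maps for lattices} lets us assume that $N' = N/L = \{y \in N \mid y \geq 1_L\}$ for an algebraic sub-lattice $L \subseteq N$ (take $x_0 = 1_L$), with $j'$ the map $x \mapsto x \vee 1_L$. Since $i' \in \MM$, its image $i'(M')$ is downward closed in $N'$, and since $i'$ is injective and preserves joins, $i'(M')$ is also closed under joins; hence by Lemma~\ref{lemma: algebraic sub-lattice} it is an algebraic sub-lattice of $N'$, and therefore of $N$.

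Now set $M := (j')^{-1}(i'(M'))$. By Lemma~\ref{lemma: sub-lattice lemma}, $M$ is an algebraic sub-lattice of $N$, so the inclusion $i \colon M \hookrightarrow N$ is a normal monomorphism. Because a morphism of algebraic lattices preserves the empty join, $1_L = 0_{N'} = i'(0_{M'}) \in i'(M')$; hence $j'(\ell) = \ell \vee 1_L = 1_L \in i'(M')$ for every $\ell \in L$, i.e. $L \subseteq M$. As $L$ is downward closed in $N$ and closed under joins, it is in particular an algebraic sub-lattice of $M$ (Lemma~\ref{lemma: algebraic sub-lattice}), so $M/L = \{x \in M \mid x \geq 1_L\}$ is defined. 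Since $j'$ is surjective we have $j'(M) = i'(M')$, and the restriction $j'|_M$ factors as the canonical projection $M \to M/L$, $x \mapsto x \vee 1_L$, followed by the inclusion $M/L \hookrightarrow N/L$; comparing images yields an isomorphism $M/L \xrightarrow{\sim} i'(M')$, and composing with $(i')^{-1}$ an isomorphism $M/L \xrightarrow{\sim} M'$. Consequently the canonical surjection $\pi := (i')^{-1} \circ j' \circ i \colon M \to M'$ is, up to this isomorphism, the natural projection $M \to M/L$, so $\pi \in \EE$. We thus complete \eqref{eq: pro4 lattice} to the commutative square
\[
\begin{tikzcd}
M \ar[r,"i",hook] \ar[d,"\pi",two heads,swap] & N \ar[d,"j'",two heads] \\
M/L \ar[r,"i'",hook] & N/L
\end{tikzcd}
\]
in which, under $M/L \cong M'$ and $N/L \cong N'$, the bottom arrow is the original $i'$; by Lemma~\ref{lemma: bi-Cartesian key lemma} this square is bi-Cartesian with $i \in \MM$ and $\pi, j' \in \EE$, as required.

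The genuinely routine parts (that $M$ is an algebraic sub-lattice, and that the completed square is bi-Cartesian) are already packaged into Lemmas~\ref{lemma: sub-lattice lemma} and~\ref{lemma: bi-Cartesian key lemma}, so the only point demanding care is the bookkeeping showing $L \subseteq M$ — which is what makes $M/L$ and $N/L$ simultaneously defined and $L$ a sub-lattice of each — together with the verification that the induced bottom map of the square coincides with the given $i'$. Both hinge on the fact that morphisms of algebraic lattices send $0$ to $0$ (preserve the empty join), and I expect this identification of the quotient data to be the subtle step rather than any hard estimate.
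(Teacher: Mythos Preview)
Your proof is correct and follows essentially the same route as the paper's: define $M=(j')^{-1}(i'(M'))$, invoke Lemma~\ref{lemma: sub-lattice lemma} to get $i\in\MM$, observe $L\subseteq M$ so that $\pi$ is identified with the projection $M\to M/L$, and finish with Lemma~\ref{lemma: bi-Cartesian key lemma}. You have actually spelled out more carefully than the paper why $L\subseteq M$ and why $M/L\cong M'$. One small slip: the clause ``and therefore of $N$'' is false in general (elements of $i'(M')$ are all $\geq 1_L$, so $i'(M')$ is not downward closed in $N$ unless $L$ is trivial), but you never use that claim, so it does no harm---just delete it.
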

\begin{proof}
Let $M= (j')^{-1}(i'(M'))$. Since $i'(M')$ is an algebraic sub-lattice of $N'$, it follows from Lemma \ref{lemma: sub-lattice lemma}, $M$ is an algebraic sub-lattice of $N$. In particular the inclusion $i:M \to N$ is a normal monomorphism. 

Next, consider the function $\pi:M \to M'$ sending any $m \in M$ to $((i')^{-1}j'i)(m)$. Note that $\pi$ is well-defined since $j'i(M) \subseteq i'(M')$ by definition. 
We claim that $\pi$ is a normal epimorphism. In fact, since $j':N \to N'$ is a normal epimorphism, we may assume that $j':N \to N/L$, where $L$ is an algebraic sub-lattice of $N$. Then, by definition of $M$, we have $L\subseteq M$ (we identify $i(M)$ and $M$). In particular, we may assume that
	\[
	\pi:M \to M'=M/L 
	\]
	and hence $\pi$ is a normal epimorphism since $L$ is an algebraic sub-lattice of $M$. Therefore, we can complete the diagram \eqref{eq: pro4 lattice} as follows:
	\begin{equation}\label{eq: Bicar lattice}
	\begin{tikzcd}
	M
	\ar[r,"i",hook]
	\ar[d,"\pi",two heads,swap]
	&
	N
	\ar[d,"j'",two heads]
	\\
	M/L
	\ar[r,"i'",hook]
	&
	N/L
	\end{tikzcd}
	\end{equation}
	Now it follows from Lemma \ref{lemma: bi-Cartesian key lemma} that \eqref{eq: Bicar lattice} is a bi-Cartesian diagram.
\end{proof}

\begin{lem}\label{lemma: eq5 lattice}
	Every diagram in $\mathcal{L}$
	\begin{equation}\label{eq: pro5 lattice}
	\begin{tikzcd}
	M \ar["j",d,two heads,swap] \ar["i",r,hook] & N\\
	M'  & {}
	\end{tikzcd}
	\end{equation}
	with $i \in \MM$ and $j \in \EE$ can be completed to a
	bi-Cartesian square \eqref{eq: bi-cat} with $i' \in \MM$ and
	$j' \in \EE$.
\end{lem}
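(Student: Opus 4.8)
The plan is to mirror the proof of Lemma \ref{lemma: eq5} for modules, replacing ``saturated submodule / quotient by a submodule'' with ``principal downset / quotient lattice'' and invoking Lemma \ref{lemma: bi-Cartesian key lemma} in place of Lemma \ref{lemma: bi-Cartesian lemma}. First, since $j \in \EE$, Definition \ref{definition: normal maps for lattices} gives an element $x_0 \in M$ with $M' \simeq \{y \in M : y \geq x_0\}$ and $j$ given by $m \mapsto m \vee x_0$ under this identification. I would then set $L := \{y \in M : y \leq x_0\}$, the principal downset of $x_0$. This subset is downward closed and closed under arbitrary joins, so by Lemma \ref{lemma: algebraic sub-lattice} it is an algebraic sub-lattice of $M$ with top element $1_L = x_0$; consequently $M/L = \{x \in M : x \geq x_0\} \cong M'$ and $j$ coincides (up to this isomorphism) with the canonical projection $\pi : M \to M/L$.

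Next, I would identify $M$ with $i(M) \subseteq N$. Since $i \in \MM$, the subset $i(M)$ is downward closed in $N$, and since $i$ is a morphism of algebraic lattices it preserves arbitrary joins, so joins of subsets of $M$ agree whether computed in $M$ or in $N$. From this I would check that $L$, viewed inside $N$, is still downward closed (if $z \in N$ and $z \leq \ell$ with $\ell \in L \subseteq M$, then $z \in M$ because $M$ is downward closed in $N$, and $z \leq x_0$, so $z \in L$) and still closed under arbitrary joins; Lemma \ref{lemma: algebraic sub-lattice} then shows $L$ is an algebraic sub-lattice of $N$ as well. Hence the projection $j' := \pi' : N \to N/L$ is a normal epimorphism, and $i$ induces the inclusion $i' : M/L \hookrightarrow N/L$, which is a normal monomorphism.

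Finally, the square with vertices $M, N, M/L, N/L$ and arrows $i, j, i', j'$ commutes, because $i$ preserves joins, so $j'(i(m)) = m \vee x_0 = i'(m \vee x_0) = i'(j(m))$. At this point Lemma \ref{lemma: bi-Cartesian key lemma} applies directly and shows the completed square is bi-Cartesian, which finishes the proof.

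The only genuinely new verification, compared to the module case, is that the principal downset $L = \{y \leq x_0\}$ attached to $j$ is an algebraic sub-lattice of both $M$ and $N$; once this is established, everything reduces to Lemmas \ref{lemma: algebraic sub-lattice} and \ref{lemma: bi-Cartesian key lemma}. I do not anticipate a serious obstacle, but some care is needed to distinguish the element $x_0$ (which parametrizes the normal epimorphism $j$) from the sub-lattice $L$ (which is what feeds into Lemma \ref{lemma: bi-Cartesian key lemma}), and to confirm that all joins in play are computed consistently in $M$ and in $N$.
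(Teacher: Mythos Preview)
Your proposal is correct and follows essentially the same route as the paper's proof: both reduce to the situation $j:M\to M/L$ for an algebraic sub-lattice $L$, form $j':N\to N/L$ and the induced $i':M/L\to N/L$, and then invoke Lemma~\ref{lemma: bi-Cartesian key lemma}. The only difference is cosmetic: the paper simply asserts ``we may assume $j:M\to M'=M/L$'', whereas you explicitly build $L$ as the principal downset $\{y\leq x_0\}$ and verify via Lemma~\ref{lemma: algebraic sub-lattice} that it is an algebraic sub-lattice of both $M$ and $N$---a detail the paper leaves implicit.
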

\begin{proof}
	Since $j$ is a normal epimorphism, we may assume that $j:M \to M'=M/L$, where $L$ is an algebraic sub-lattice of $M$. We let $j':N \to N/L$ be the map sending $x$ to $(x \vee 1_L)$, which is a normal epimorphism by definition. One can easily see that $i$ induces a natural map
	\[
	i': M/L \to N/L, \quad x \mapsto i(x), 
	\]
	which is a normal monomorphism. In particular, we can complete the diagram \eqref{eq: pro5 lattice} as follows:
	\begin{equation}\label{eq: propro lattice}
	\begin{tikzcd}
	M
	\ar[r,"i",hook]
	\ar[d,"\pi",two heads,swap]
	&
	N
	\ar[d,"j'",two heads]
	\\
	M/L
	\ar[r,"i'",hook]
	&
	N/L
	\end{tikzcd}
	\end{equation}
	Now, it follows from Lemma \ref{lemma: bi-Cartesian key lemma} that \eqref{eq: propro lattice} is a bi-Cartesian diagram. 
\end{proof}

\begin{mythm}\label{theorem: main theorem for lattices}
The triple $(\mathcal{L},\MM,\EE)$ as in Definition \ref{definition: algebraic lattice = proto} defines a proto-exact category.
\end{mythm}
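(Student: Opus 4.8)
The plan is to verify the five axioms of Definition~\ref{definition: proto_exact} for the triple $(\mathcal{L},\MM,\EE)$, reusing the lemmas already established in this subsection. Axiom~(1) is immediate: the trivial lattice $\{0\}$ is a zero object, the map $\{0\}\to A$ is injective with downward-closed image $\{0_A\}$ (so it is a normal monomorphism), and $A\to\{0\}$ is the join-with-$1_A$ map onto $\{x\in A\mid x\geq 1_A\}=\{1_A\}\cong\{0\}$ (so it is a normal epimorphism). Axiom~(2) --- that $\MM$ and $\EE$ contain all isomorphisms and are closed under composition --- is exactly Lemma~\ref{lemma: composition lemma lattice}. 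Axiom~(4) (completing a cospan $M'\hookrightarrow N'\twoheadleftarrow N$ to a bi-Cartesian square) is Lemma~\ref{lemma: pro4 lattice}, and axiom~(5) (completing a span $M'\twoheadleftarrow M\hookrightarrow N$) is Lemma~\ref{lemma: eq5 lattice}.

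It remains to treat axiom~(3): a commutative square of the shape \eqref{eq: bi-cat} with $i,i'\in\MM$ and $j,j'\in\EE$ is Cartesian if and only if it is co-Cartesian. I would deduce this from axioms~(4) and~(5) together with uniqueness of limits and colimits, exactly as in the proof of Theorem~\ref{theorem: main theorem for modules}. Concretely: given such a square that is Cartesian, apply axiom~(5) to the span $M/L \xleftarrow{} M \xhookrightarrow{} N$ coming from the left and top edges to produce a bi-Cartesian (in particular co-Cartesian) completion; the resulting square has the same span as our given square, so by the universal property of pushouts the two completions agree up to unique isomorphism, whence our original square is co-Cartesian. The converse direction is symmetric, using axiom~(4) applied to the cospan formed by the bottom and right edges and uniqueness of pullbacks. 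One should note that Lemma~\ref{lemma: bi-Cartesian key lemma} already furnishes the key class of bi-Cartesian squares (those of the form \eqref{eq: bicar diagram lattice}), and the proofs of Lemmas~\ref{lemma: pro4 lattice} and~\ref{lemma: eq5 lattice} show that every square arising from axioms~(4) and~(5) can be put in this form, so the "uniqueness" bookkeeping in axiom~(3) goes through cleanly.

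I do not expect a genuine obstacle here, since all the substantive work has been front-loaded into the lemmas; the one point requiring a little care is the identification, in the reduction for axiom~(3), of the algebraic sub-lattice $L\subseteq M$ implicit in a given normal epimorphism $\pi\colon M\to M/L$ with the sub-lattice $L\subseteq N$ needed on the right edge --- but this is precisely the compatibility already checked inside Lemma~\ref{lemma: eq5 lattice} (namely that if $M$ embeds as a downward-closed sub-lattice of $N$ and $L$ is an algebraic sub-lattice of $M$, then $L$ is an algebraic sub-lattice of $N$), so it can simply be cited.

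\begin{proof}
The category $\mathcal{L}$ is pointed, with zero object the one-element lattice. Axiom~$(1)$ of Definition~\ref{definition: proto_exact} is immediate from Definition~\ref{definition: normal maps for lattices}, and axioms $(2)$, $(4)$, and $(5)$ are Lemmas~\ref{lemma: composition lemma lattice}, \ref{lemma: pro4 lattice}, and~\ref{lemma: eq5 lattice}, respectively. Finally, axiom~$(3)$ follows from $(4)$ and $(5)$ together with the uniqueness of pullbacks and pushouts, exactly as in the proof of Theorem~\ref{theorem: main theorem for modules}.
\end{proof}
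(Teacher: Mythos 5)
Your proposal is correct and follows essentially the same route as the paper's proof: axioms (2), (4), (5) are cited from Lemmas \ref{lemma: composition lemma lattice}, \ref{lemma: pro4 lattice}, and \ref{lemma: eq5 lattice}, and axiom (3) is deduced from (4) and (5) together with uniqueness of pullbacks and pushouts. The extra detail you supply on axiom (1) and on the bookkeeping for axiom (3) is sound and merely makes explicit what the paper leaves implicit.
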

\begin{proof}
	$\mathcal{L}$ is clearly pointed. The axioms $(2),(4)$, and $(5)$ in Definition \ref{definition: proto_exact} directly follow from Lemmas \ref{lemma: composition lemma lattice}, \ref{lemma: pro4 lattice}, and \ref{lemma: eq5 lattice}. Finally, the axiom $(3)$ is a direct consequence of $(4)$ and $(5)$ along with the uniqueness of pullbacks and pushouts. 
\end{proof}

\subsection{Connection to $\mathbb{B}$-modules and geometric lattices}

In this subsection, we study relations between the category of $\mathbb{B}$-modules and the category of algebraic lattices $\mathcal{L}$. We refer the reader to \cite[\S 3]{jun2020lattices} more on this line of ideas.  

Let $M$ be a $\mathbb{B}$-module and $S(M)$ be the set of all saturated submodules of $M$. It is well-known that $S(M)$ is an algebraic lattice. Let $S(M)^c$ be an algebraic lattice of $S(M)$ consisting of compact elements of $S(M)$. The following is also well-known. 

\begin{mythm}\label{theorem: lattice B-mdoule thm}
With the same notation as above, we have the following.
\begin{enumerate}
	\item 
$M\simeq S(M)^c$ (as $\mathbb{B}$-modules).
	\item
Let $L$ be an algebraic lattice. Then $L^c$ is equipped with a $\mathbb{B}$-module structure with the order induced from $L$. Moreover, $L$ is isomorphic to $S(L^c)$ (as lattices), where we consider $L^c$ as a $\mathbb{B}$-module.	
\end{enumerate}
\end{mythm}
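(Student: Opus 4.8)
The plan is to treat a $\mathbb{B}$-module as what it is — a join-semilattice with least element $0$, with addition $x+y=x\vee y$ — and to use throughout that, by Proposition \ref{proposition: saturated equiv}, a submodule of such a module is saturated precisely when it is downward closed. For $x$ in a $\mathbb{B}$-module $M$ write ${\downarrow}x=\{y\in M\mid y\le x\}$; this is a downward closed submodule, and is plainly the smallest saturated submodule containing $x$, i.e.\ the saturation closure $\overline{\{x\}}$. With this in hand I would prove (1) by identifying the compact elements of $S(M)$ with the ${\downarrow}x$ and showing $x\mapsto{\downarrow}x$ is an isomorphism of $\mathbb{B}$-modules, and (2) by first checking that $L^c$ is closed under finite joins and contains $0_L$ — which is exactly the assertion that $L^c$ is a sub-$\mathbb{B}$-module of $L$ with the induced order — and then showing $a\mapsto\{x\in L^c\mid x\le a\}$ is an order isomorphism $L\cong S(L^c)$.

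For (1), first I would check ${\downarrow}x$ is compact in $S(M)$: if $\{N_i\}_{i\in I}$ is a family in $S(M)$ with ${\downarrow}x\le\bigvee_i N_i$, then $x\in\bigvee_i N_i$, which is the saturation closure of the submodule generated by $\bigcup_i N_i$; the elements of that submodule are the finite joins $y_{i_1}\vee\cdots\vee y_{i_k}$ with $y_{i_j}\in N_{i_j}$, and its saturation closure is their downset, so $x\le y_{i_1}\vee\cdots\vee y_{i_k}$ for one such finite join and hence ${\downarrow}x\le N_{i_1}\vee\cdots\vee N_{i_k}$, a finite subcover. Conversely every $N\in S(M)$ equals $\bigvee_{y\in N}{\downarrow}y$ (the set-theoretic union $\bigcup_{y\in N}{\downarrow}y$ is just $N$, already a saturated submodule), so a compact $N$ is a finite such join ${\downarrow}y_1\vee\cdots\vee{\downarrow}y_n={\downarrow}(y_1\vee\cdots\vee y_n)$, a principal one. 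Then $x\mapsto{\downarrow}x$ is injective ($x$ is the maximum of ${\downarrow}x$), surjective onto $S(M)^c$ by the previous sentence, and a homomorphism of $\mathbb{B}$-modules since ${\downarrow}(x\vee y)={\downarrow}x\vee{\downarrow}y$ in $S(M)$ and ${\downarrow}0=\{0\}$ is the zero object; a bijective homomorphism of $\mathbb{B}$-modules is an isomorphism.

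For (2), I would note that $0_L$ is compact (for any cover, the subfamily indexed by $\emptyset$ is a subcover) and that the join of two compact elements is compact (a cover of $x\vee y$ covers each of $x$ and $y$, so take finite subcovers and their union), so finite joins in $L^c$ are computed as in $L$ and $(L^c,\vee,0_L)$ is a $\mathbb{B}$-module whose canonical order $x\le y\iff x\vee y=y$ is the restriction of that of $L$. For $a\in L$ the set $\psi(a):=\{x\in L^c\mid x\le a\}$ is downward closed in $L^c$, closed under finite joins, and contains $0_L$, hence is a saturated submodule of $L^c$. Since $L$ is algebraic, $a=\bigvee_L\psi(a)$, which simultaneously gives that $\psi$ is injective and that $\psi(a)\subseteq\psi(b)$ forces $a\le b$ (so $\psi$ reflects order). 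For surjectivity, given a saturated submodule $N$ of $L^c$ put $a=\bigvee_L N$: then $N\subseteq\psi(a)$ is clear, and if $x\in L^c$ with $x\le a=\bigvee_L N$, compactness of $x$ gives $x\le y_1\vee\cdots\vee y_n$ with $y_j\in N$, the join $y_1\vee\cdots\vee y_n$ lies in $N$ because $N$ is a submodule, and then $x\in N$ because $N$ is downward closed; hence $\psi(a)=N$. So $\psi$ is an order-preserving, order-reflecting bijection, i.e.\ an isomorphism of lattices.

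The hard part is really just the two symmetric appeals to compactness; everything else is bookkeeping about idempotent commutative monoids. In (1) the point to get right is that a finitely generated saturated submodule is the principal downset of the join of its generators, which is what pins down the compact objects of $S(M)$ (and makes $x\mapsto{\downarrow}x$ land in $S(M)^c$); in (2) it is the upgrade of $x\le\bigvee_L N$, for compact $x$, to $x\in N$, using a finite subcover together with closure of $N$ under finite joins and downward closure. A minor point worth stating explicitly is that finite joins in $L^c$ coincide with those taken in $L$, so that the $\mathbb{B}$-module structure on $L^c$ really carries the order inherited from $L$ and $S(L^c)$ is built from that structure.
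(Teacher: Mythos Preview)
Your proof is correct. Note, however, that the paper does not actually prove this theorem: it is stated as well-known, with no argument given. So there is no ``paper's own proof'' to compare against directly.

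That said, your argument is consistent with the surrounding material. Your characterization of the compact elements of $S(M)$ as exactly the principal downsets ${\downarrow}x$ is precisely the content of Lemma~\ref{lemma: compact lemma}, which the paper proves independently later (with essentially the same argument you give). Likewise, your observation that the saturation closure in the idempotent setting is the downward closure of the generated submodule is what underlies Lemma~\ref{lemma: saturation closure}. So your proof of part~(1) effectively anticipates and reproves these lemmas in passing, and then packages them into the isomorphism $x\mapsto{\downarrow}x$. Your proof of part~(2) is the standard one and is self-contained; the only point one might want to make more explicit is that $\psi$ is order-preserving (you state order-reflecting carefully but leave the forward direction implicit), though this is immediate from the definition.
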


The following lemma will be handy. 

\begin{lem}\cite{jun2020lattices}\label{lemma: saturation closure}
Let $M$ be a $\mathbb{B}$-module and $N$ be a submodule of $M$. The saturated submodule of $M$ generated by $N$ is given as follows:
\[
\{x \in M \mid \exists y \in N\textrm{ such that } x+y \in N\}.
\]
\end{lem}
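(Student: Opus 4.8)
The statement to prove is Lemma \ref{lemma: saturation closure} (the restated version): for a $\mathbb{B}$-module $M$ and submodule $N$, the saturated submodule generated by $N$ equals $\{x \in M \mid \exists y \in N \text{ such that } x+y \in N\}$.

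\textbf{Approach.} The plan is to reconcile this with the general semiring formula already established in Proposition \ref{proposition: saturation closure}, namely $\angles{N} = \{x \in M \mid x + a = b \text{ for some } a,b \in N\}$. The claim is that when $R = \mathbb{B}$ — or more generally whenever the ambient module is additively idempotent — the two descriptions coincide. So the whole lemma reduces to checking set equality of $L := \{x : x+a=b,\ a,b\in N\}$ and $L' := \{x : \exists y \in N,\ x+y \in N\}$. The inclusion $L' \subseteq L$ is immediate: if $x + y \in N$ with $y \in N$, set $a = y$ and $b = x+y$, both in $N$. The reverse inclusion $L \subseteq L'$ is where idempotency enters: given $x + a = b$ with $a,b \in N$, I want a single witness $y \in N$ with $x + y \in N$. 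The natural candidate is $y = a$ itself, but then I need $x + a \in N$, which is exactly $b \in N$ — so in fact $x + a = b \in N$ already, and $a \in N$, giving $x \in L'$ directly. Wait — that shows $L \subseteq L'$ with no idempotency needed at all. Let me re-examine: indeed, if $x+a = b$ with $a, b \in N$, then taking $y = a$ we have $y \in N$ and $x + y = b \in N$, so $x \in L'$. Hence $L = L'$ over any semiring, and the lemma is just Proposition \ref{proposition: saturation closure} rephrased. (The $\mathbb{B}$-context is presumably just where the authors invoke it.)

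\textbf{Key steps in order.} First, I would recall Proposition \ref{proposition: saturation closure}, which already identifies $\angles{N}$ with $\{x \in M \mid x+a = b \text{ for some } a,b \in N\}$. Second, I would verify that this set equals $\{x \in M \mid \exists y \in N \text{ such that } x+y \in N\}$: the inclusion $\supseteq$ takes $a := y$, $b := x+y$; the inclusion $\subseteq$ takes $y := a$ and observes $x + y = b \in N$. Third, conclude. Alternatively, if one prefers a self-contained argument not citing Proposition \ref{proposition: saturation closure}, I would directly show $L'$ is a saturated submodule containing $N$ (submodule: closed under addition and scalar action — here using $\mathbb{B}$-module axioms, $0 \in L'$ via $y=0$; saturated: if $x+z, z \in L'$ then picking witnesses and using the $\mathbb{B}$-idempotent structure assemble a witness for $x$), and that any saturated submodule containing $N$ must contain $L'$ (if $x+y \in N \subseteq K$ and $y \in N \subseteq K$ then saturation of $K$ forces $x \in K$).

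\textbf{Main obstacle.} Honestly there is essentially no obstacle: the lemma is a cosmetic reformulation of Proposition \ref{proposition: saturation closure}, and the set-theoretic identity between the two descriptions is a one-line check in each direction. The only mild subtlety in a from-scratch proof would be confirming $L'$ is genuinely a submodule and genuinely saturated; for a $\mathbb{B}$-module the additive idempotency makes the saturation check trivial (from $z, x+z \in L'$ one gets witnesses $z + y_1 \in N$, $x + z + y_2 \in N$ with $y_i \in N$, and then $x + (z + y_1 + y_2)$ lies in $N$ after adding $z+y_1 \in N$, using commutativity and idempotency to absorb the repeated $z$), but since Proposition \ref{proposition: saturation closure} is already available the cleanest route is simply to cite it and perform the two-line translation.
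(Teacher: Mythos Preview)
Your proof is correct, and in fact you have noticed something the paper glosses over: the lemma is literally a restatement of Proposition~\ref{proposition: saturation closure}, since the conditions ``$x+a=b$ for some $a,b\in N$'' and ``$x+y\in N$ for some $y\in N$'' define the same set over \emph{any} semiring (take $y=a$ in one direction, $a=y,\ b=x+y$ in the other). No idempotency is needed.

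The paper itself does not give a proof of this lemma; it simply cites \cite{jun2020lattices}. So there is no ``paper's own proof'' to compare against beyond Proposition~\ref{proposition: saturation closure}, which you have already correctly identified as doing all the work. Your alternative self-contained route (directly verifying that $L'$ is a saturated submodule containing $N$ and is minimal with this property) is also fine, but unnecessary given that Proposition~\ref{proposition: saturation closure} is available.
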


For the following lemma, we let $\angles{N}$ be the saturated submodule generated by $N$.

\begin{lem}\label{lemma: vee lemma}
Let $f:M \to N$ be a morphism of $\mathbb{B}$-modules. Let $\{M_i\}$ be a family of submodules of $M$, $T$ be the submodule of $M$ generated by $\cup M_i$, and $L=\angles{T}$. Then we have
\[
\angles{f(L)} = \angles{\cup f(M_i)}.
\] 
\end{lem}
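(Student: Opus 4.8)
The plan is to prove the two inclusions separately; only one direction carries content, and it reduces quickly to the explicit description of the saturation closure recorded in Lemma~\ref{lemma: saturation closure}.

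For the inclusion $\angles{\cup f(M_i)} \subseteq \angles{f(L)}$ I would simply observe that each $M_i$ is contained in $T$, hence in $L$, so $f(M_i) \subseteq f(L)$ for every $i$; therefore $\cup f(M_i) \subseteq f(L) \subseteq \angles{f(L)}$, and since the right-hand side is already saturated, taking saturation closures of both sides gives the claim. It is worth noting along the way that $L = \angles{T} = \angles{\cup M_i}$, since the saturation of a subset equals the saturation of the submodule it generates; this keeps the bookkeeping honest but will not otherwise be needed.

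For the reverse inclusion, write $S := \angles{\cup f(M_i)}$; since $S$ is saturated, it is enough to show $f(L) \subseteq S$. First I would check the auxiliary fact that $f(T) \subseteq S$: an element of $T$ is a finite sum of elements of $\cup M_i$, so $f$ carries it to a finite sum of elements of $\cup f(M_i)$, which lies in the submodule generated by $\cup f(M_i)$ and hence in its saturation $S$. Now take $x \in L = \angles{T}$. By Lemma~\ref{lemma: saturation closure} (the $\mathbb{B}$-module form: $x \in \angles{N}$ iff $x + y \in N$ for some $y \in N$) there is $y \in T$ with $x + y \in T$. Applying $f$ gives $f(y) \in f(T) \subseteq S$ and $f(x) + f(y) = f(x+y) \in f(T) \subseteq S$; as $S$ is saturated, this forces $f(x) \in S$. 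Thus $f(L) \subseteq S$, and one more application of the saturation-closure operation yields $\angles{f(L)} \subseteq S = \angles{\cup f(M_i)}$.

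I do not expect a genuine obstacle here — the argument is short. The only points requiring care are to use the $\mathbb{B}$-module form of the saturation closure rather than the general-semiring version (Proposition~\ref{proposition: saturation closure}), and to keep straight when $\angles{\cdot}$ is being applied to a set versus to a submodule. Everything else is just the fact that a morphism of $\mathbb{B}$-modules commutes with finite sums, together with the defining property of a saturated submodule.
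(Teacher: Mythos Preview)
Your argument is correct and is in fact cleaner than the paper's. Both proofs handle the easy inclusion $\angles{\cup f(M_i)} \subseteq \angles{f(L)}$ the same way. For the reverse inclusion, the paper takes an arbitrary element $y \in \angles{f(L)}$ and shows $y \in K = \angles{\cup f(M_i)}$ by unwinding the saturation description of Lemma~\ref{lemma: saturation closure} \emph{twice} (once for $y \in \angles{f(L)}$, then again for each preimage in $L = \angles{T}$), and finishes with an explicit idempotent computation to exhibit a witness for $y \in K$. You instead observe that since $K$ is saturated it suffices to show $f(L) \subseteq K$, which only requires a single application of Lemma~\ref{lemma: saturation closure} together with the easy observation $f(T) \subseteq K$. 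This buys you a noticeably shorter proof with no loss of generality; the paper's approach buys nothing extra except perhaps a more explicit witness for membership in $K$, which is not needed.
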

\begin{proof}
Let $K=\angles{\cup f(M_i)}$. It is clear that $K \subseteq \angles{f(L)}$. Conversely, suppose that $y \in \angles{f(L)}$. It follows from Lemma \ref{lemma: saturation closure} that there exists $x \in f(L)$ such that
\[
y+x \in f(L).
\] 
Take $z\in L$ such that $f(z)=y+x$. Again, from Lemma \ref{lemma: saturation closure}, there exists $q \in T$ such that $z+q \in T$. In particular, we can find $x_i,y_i\in \cup M_i$ such that
\[
z+q=\sum x_i, \quad q=\sum y_i.
\]
In particular, we have
\begin{equation}\label{eq: s1}
y+x+f(q)=f(z)+f(q)=f(z+q)=\sum f(x_i), \quad f(q)=\sum f(y_i). 
\end{equation}
On the other hand, since $x \in f(L)$, we may write $x=f(w)$ for some $w \in L$. So, there exists $q' \in T$ such that $w+q' \in T$. In particular, we can find $a_i,b_i \in \cup M_i$ such that 
\[
w+q'=\sum a_i, \quad q'=\sum b_i. 
\]
Hence we have
\begin{equation}\label{eq: s2}
f(w+q')=f(w)+f(q')=x+f(q')=\sum f(a_i)=x+\sum f(b_i).
\end{equation}
By combining \eqref{eq: s1} and \eqref{eq: s2}, we have
\[
y+x+f(q)+f(q')=y+\sum f(a_i) +f(q)=y+ \sum f(a_i)+\sum f(y_i) = \sum f(x_i) +\sum f(b_i). 
\]
Since the addition is idempotent, we further obtain:
\[
y+ ( \sum f(a_i) + \sum f(b_i) + \sum f(x_i) + \sum f(y_i))=( \sum f(a_i) + \sum f(b_i) + \sum f(x_i) + \sum f(y_i)),
\]
showing that $y \in K$. 
\end{proof}

\begin{lem}\label{lemma: compact lem1}
	Let $f:M \to N$ be a morphism of $\mathbb{B}$-modules. Let $K=\{m \in M \mid m \leq b\}$ for some $b$. Then $K$ is a saturated submodule of $M$ and $\angles{f(K)}=\{n \in N \mid n \leq f(b)\}$.
\end{lem}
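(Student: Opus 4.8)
The plan is to show directly that $\angles{f(K)}$ equals the ``principal down-set'' $K' := \{\, n \in N \mid n \le f(b) \,\}$ of $N$ cut out by $f(b)$. First I would dispose of the two elementary submodule assertions at once, by noting that for any element $c$ of a $\mathbb{B}$-module the set $\{\, m \mid m \le c \,\}$ is a submodule: it contains $0$, is closed under the $\mathbb{B}$-action trivially, and is closed under addition since $m_1, m_2 \le c$ gives $(m_1 + m_2) + c = m_1 + (m_2 + c) = m_1 + c = c$. It is downward closed by transitivity of $\le$, hence saturated by Proposition \ref{proposition: saturated equiv}. Taking $c = b$ shows $K$ is a saturated submodule of $M$; taking $c = f(b)$ shows $K'$ is a saturated submodule of $N$.

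Next I would observe that $f(K)$ is itself a submodule of $N$ (immediate, since $f$ is additive, preserves $0$, and commutes with the $\mathbb{B}$-action), so that $\angles{f(K)}$ is unambiguous, and that any morphism of $\mathbb{B}$-modules preserves the canonical order: $x \le y$ means $x + y = y$, whence $f(x) + f(y) = f(x+y) = f(y)$, i.e.\ $f(x) \le f(y)$. Applying this to $k \le b$ yields $f(k) \le f(b)$, so $f(K) \subseteq K'$; since $K'$ is saturated this already gives $\angles{f(K)} \subseteq K'$.

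For the reverse inclusion I would use minimality of the saturated closure: if $P \subseteq N$ is any saturated submodule containing $f(K)$, then $f(b) \in f(K) \subseteq P$ (as $b \in K$), and then Proposition \ref{proposition: saturated equiv} forces $n \in P$ for every $n \le f(b)$, i.e.\ $K' \subseteq P$; applying this with $P = \angles{f(K)}$ gives $K' \subseteq \angles{f(K)}$, and the two inclusions finish the proof. Alternatively this step can be done computationally via the explicit description of the saturated closure for $\mathbb{B}$-modules in Lemma \ref{lemma: saturation closure}: $n \in \angles{f(K)}$ iff $n + f(k) = f(k')$ for some $k, k' \le b$, whence $n \le n + f(k) = f(k') \le f(b)$; conversely $n \le f(b)$ gives $n + f(b) = f(b) \in f(K)$.

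I do not expect any genuine difficulty here --- the whole argument is really just the characterization of saturated submodules over an idempotent semiring as the downward-closed submodules, together with the elementary fact that $\mathbb{B}$-module morphisms are order-preserving. The only step warranting explicit care is checking that $f(K)$ is a submodule, so that the symbol $\angles{f(K)}$ makes sense before we identify it with $K'$.
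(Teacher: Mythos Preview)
Your proposal is correct and follows essentially the same approach as the paper: both show that $K$ and $K' = \{n \le f(b)\}$ are saturated via Proposition~\ref{proposition: saturated equiv}, that $f(K) \subseteq K'$ gives $\angles{f(K)} \subseteq K'$, and then use minimality (any saturated $T \supseteq f(K)$ contains $f(b)$, hence all of $K'$) for the reverse inclusion. Your write-up simply spells out more of the routine details (closure under addition, order-preservation of $f$) that the paper leaves implicit; note also that $\angles{-}$ is defined in the paper for arbitrary subsets, so your worry about $f(K)$ being a submodule is unnecessary for the main argument, though it is needed if you want to invoke Lemma~\ref{lemma: saturation closure} in your alternative.
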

\begin{proof}
From Proposition \ref{proposition: saturated equiv}, one may easily observe that $K$ is a saturated submodule of $M$. Let $L=\{n \in N \mid n \leq f(b)\}$. It is clear that $f(K) \subseteq L$ and $L$ is a saturated submodule of $N$ (again from Proposition \ref{proposition: saturated equiv}). So, it is enough to show that any saturated submodule $T$ of $N$ containing $f(K)$ contains $L$ as well. However, as $b \in K$, we have $f(b) \in f(K) \subseteq T$. It follows that if $n \leq f(b)$ then $n \in T$ since $T$ is saturated. Hence $L \subseteq T$.
\end{proof}

\begin{lem}\label{lemma: compact lemma}
Let $M$ be a $\mathbb{B}$-module and $N$ be a saturated submodule of $M$. Then $N$ is compact (as an element of the algebraic lattice $S(M)$) if and only if $N=\{a \in M \mid a\leq b\}$ for some $b \in M$.
\end{lem}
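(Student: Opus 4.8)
The statement to prove is: for a $\mathbb{B}$-module $M$ and a saturated submodule $N$, the element $N \in S(M)$ is compact if and only if $N = \{a \in M \mid a \leq b\}$ for some $b \in M$. The plan is to prove the two implications separately, using the preceding lemmas.

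For the ``if'' direction, suppose $N = \{a \in M \mid a \leq b\}$ for some $b \in M$; equivalently, by Lemma~\ref{lemma: saturation closure} (or Proposition~\ref{proposition: saturated equiv}), $N = \angles{b}$, the saturated submodule generated by the single element $b$. I would show directly that a principal saturated submodule is compact in $S(M)$. Suppose $\{N_i\}_{i \in I}$ is a cover of $N$ in $S(M)$, i.e.\ $N \leq \bigvee_i N_i$. The join in $S(M)$ is the saturation of the submodule generated by $\bigcup_i N_i$, so $b \in \angles{\sum_i N_i}$. By Lemma~\ref{lemma: saturation closure}, there is some $y$ in the submodule $\sum_i N_i$ with $b + y \in \sum_i N_i$; hence both $b+y$ and $y$ are finite sums of elements drawn from finitely many of the $N_i$, say from $N_{i_1}, \dots, N_{i_n}$. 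Then $b \leq b + y \in N_{i_1} + \cdots + N_{i_n} \subseteq \bigvee_{k=1}^n N_{i_k}$, and since each $N_{i_k}$ is saturated (hence downward closed by Proposition~\ref{proposition: saturated equiv}) so is their join, giving $b \in \bigvee_{k=1}^n N_{i_k}$ and therefore $N = \angles{b} \leq \bigvee_{k=1}^n N_{i_k}$. Thus $N$ has a finite subcover, so $N$ is compact.

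For the ``only if'' direction, suppose $N$ is a compact element of $S(M)$. Every element of $N$ is below (in fact equal to, for $\mathbb{B}$-modules) a finite join of elements of $N$, so writing $N = \bigvee_{x \in N} \angles{x}$ exhibits a cover of $N$ by the principal saturated submodules $\angles{x}$, $x \in N$. Compactness yields finitely many $x_1, \dots, x_n \in N$ with $N \leq \angles{x_1} \vee \cdots \vee \angles{x_n}$. Taking $b := x_1 \vee \cdots \vee x_n \in N$ (using that $N$ is a submodule and $\mathbb{B}$-modules have finite joins given by addition), one checks $\angles{x_1} \vee \cdots \vee \angles{x_n} = \angles{b} = \{a \in M \mid a \leq b\}$, so $N \subseteq \{a \mid a \leq b\}$; the reverse inclusion holds because $b \in N$ and $N$ is downward closed. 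Hence $N = \{a \in M \mid a \leq b\}$.

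I do not anticipate a serious obstacle here: both directions are short manipulations with saturation closures, and the key technical inputs (Lemma~\ref{lemma: saturation closure} describing $\angles{N}$, Proposition~\ref{proposition: saturated equiv} identifying saturated with downward closed, and the fact that joins in $S(M)$ are saturations of sums) are all already available. The only point requiring mild care is bookkeeping in the ``if'' direction: one must be careful that $y$ and $b+y$ involve only \emph{finitely many} of the $N_i$, which is automatic since each is a finite sum in $\sum_i N_i$, and that the join of saturated submodules is again downward closed so that $b \leq b+y$ forces $b$ into the finite subjoin.
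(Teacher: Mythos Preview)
Your proposal is correct and follows essentially the same approach as the paper: for the compact-implies-principal direction you cover $N$ by the principal saturated submodules $\angles{x}$ and extract a finite subcover to produce $b$, and for the converse you locate $b$ inside a finite sub-join of the cover. Your ``if'' direction is spelled out more carefully (via Lemma~\ref{lemma: saturation closure}) than the paper's, which simply asserts that $b \leq \sum_{i=1}^t k_i$ for finitely many $k_i \in K_i$, but the underlying argument is the same.
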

\begin{proof}
First, suppose that $N$ is compact. Let $S=\{\angles{n}\}_{n \in N}$ be a cover of $N$ in $S(M)$. Since $N$ is compact, there exists a finite subcover, say $\{\angles{n_1},\dots,\angles{n_r}\}$. Let $b=\sum_{i=1}^r n_i$. Then for any $a \in N$, we have $a \leq b$. Now, from Proposition \ref{proposition: saturated equiv}, $N=\{a \in M \mid a\leq b\}$.

Conversely, suppose that $N=\{a \in M \mid a \leq b\}$ for some $b \in M$. Let $S=\{K_i\}_{i \in I}$ be a cover of $N$. Since $S$ is a cover, after relabeling if needed, we may assume that there exist $k_i \in K_i$ (for $i=1,\dots,t$) such that $b \leq \sum_{i=1}^tk_i$. It follows that $\{K_1,\dots,K_t\}$ is a finite subcover, showing that $N$ is compact. 
\end{proof}

\begin{pro}
Theorem \ref{theorem: lattice B-mdoule thm} is functorial and hence one obtains the following functor:
\[
S:\emph{Mod}_\mathbb{B} \to \mathcal{L}, \quad M\mapsto S(M).
\]
\end{pro}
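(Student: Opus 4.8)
The plan is to check that the assignment $M \mapsto S(M)$ extends to morphisms and respects identities and composition, using the already-established dictionary between saturated submodules and elements of the algebraic lattice. Given a morphism $f\colon M \to N$ of $\mathbb{B}$-modules, I would define $S(f)\colon S(M) \to S(N)$ by sending a saturated submodule $P \subseteq M$ to $\angles{f(P)}$, the saturation closure of its image; this is forced by the requirement that $S(f)$ be a morphism of algebraic lattices (which preserves joins, not meets). Functoriality on identities is immediate since $\angles{P} = P$ for $P$ saturated. For composition $g \circ f$, one must show $\angles{g(\angles{f(P)})} = \angles{(gf)(P)}$; since $(gf)(P) \subseteq g(\angles{f(P)})$ gives one inclusion trivially, and Lemma \ref{lemma: saturation closure} together with the idempotency of addition handles the reverse inclusion by the same mechanism as in Lemma \ref{lemma: vee lemma}.

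The substantive point is that $S(f)$ is actually a morphism in $\mathcal{L}$, i.e.\ it preserves arbitrary joins and compact elements. Preservation of joins is precisely the content of Lemma \ref{lemma: vee lemma}: if $\{P_i\}$ is a family of saturated submodules with join $\angles{T}$ in $S(M)$ (where $T$ is the submodule generated by $\cup P_i$), then $\angles{f(\angles{T})} = \angles{\cup f(P_i)}$, which is exactly $\bigvee_i S(f)(P_i)$ in $S(N)$. Preservation of compact elements follows by combining Lemma \ref{lemma: compact lemma} with Lemma \ref{lemma: compact lem1}: a compact element of $S(M)$ is of the form $\{a \in M \mid a \leq b\}$ for some $b \in M$, and Lemma \ref{lemma: compact lem1} identifies $\angles{f(\{a \mid a\leq b\})}$ with $\{n \in N \mid n \leq f(b)\}$, which is compact in $S(N)$ again by Lemma \ref{lemma: compact lemma}.

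Finally, I would remark that the functoriality statement of Theorem \ref{theorem: lattice B-mdoule thm} asserts more: the two assignments $M \mapsto S(M)^c$ and $L \mapsto L^c$ (with $L^c$ viewed as a $\mathbb{B}$-module) are mutually inverse up to natural isomorphism. Having verified that $S$ is a functor $\mathrm{Mod}_\mathbb{B} \to \mathcal{L}$, one checks that the isomorphisms $M \simeq S(M)^c$ and $L \simeq S(L^c)$ of Theorem \ref{theorem: lattice B-mdoule thm} are natural in $M$ and $L$ respectively; this is routine once one unwinds that $S(f)$ was defined to be compatible with $f$ under the identification $M \cong S(M)^c$ (a compact saturated submodule $\{a \mid a \leq b\}$ corresponds to $b \in M$, and $S(f)$ sends it to the compact submodule corresponding to $f(b)$). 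The main obstacle is not conceptual but bookkeeping: one must be careful that joins, images, and saturation closures are all computed in the correct ambient module, and that the empty join (the zero submodule) and the whole module are handled correctly — but these are exactly the verifications already carried out in Lemmas \ref{lemma: vee lemma}, \ref{lemma: compact lem1}, and \ref{lemma: compact lemma}, so the proof is essentially an assembly of those results.
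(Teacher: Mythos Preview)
Your proposal is correct and follows essentially the same approach as the paper: define $S(f)(P)=\angles{f(P)}$, invoke Lemma~\ref{lemma: vee lemma} for join-preservation, and combine Lemmas~\ref{lemma: compact lemma} and~\ref{lemma: compact lem1} for compactness. You are in fact more careful than the paper on the composition identity $\angles{g(\angles{f(P)})}=\angles{(gf)(P)}$ (the paper just says ``clear from the definition''), and your final paragraph on naturality goes beyond what this proposition asserts---that material belongs to, and is handled in, Proposition~\ref{proposition: B-mdoule subcategory}.
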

\begin{proof}
Let $M,N$ be $\mathbb{B}$-modules. For each $f \in \Hom(M,N)$, consider the function $S(f):S(M) \to S(N)$ sending any saturated submodule $L$ of $M$ to the saturated submodule of $N$ generated by $f(L)$. It follows from Lemma \ref{lemma: vee lemma} that $S(f)$ preserves arbitrary joins. Next, suppose that $y \in S(M)$ is compact. We claim that $z=S(f)(y)$ is compact. In fact, from Lemma \ref{lemma: compact lemma} a saturated submodule $y$ is compact if and only if it has the form $y=\{a \mid a \leq b\} \subseteq M$ for some $b \in M$. Furthermore, from Lemma \ref{lemma: compact lem1}, we have that $z=\{d \mid d \leq f(b)\}$, showing that $z$ is compact. This proves that $S(f)$ is a morphism of algebraic lattices. It remains to check that $S(fg)=S(f)S(g)$ for morphisms $f:M \to N$ and $g:L \to M$ in $\textrm{Mod}_\mathbb{B}$. But, this is clear from the definition.
\end{proof}

\begin{lem}\label{lemma: admissible lemma}
Let $f:M\to N$ be a morphism of $\mathbb{B}$-modules. 
\begin{enumerate}
	\item
If $f$ is a normal monomorphism in $\emph{Mod}_\mathbb{B}$, then $S(f)$ is a normal monomorphism in $\mathcal{L}$. 
\item 
If $f$ is a normal epimorphism in $\emph{Mod}_\mathbb{B}$, then $S(f)$ is a normal epimorphism in $\mathcal{L}$. 
\end{enumerate}
\end{lem}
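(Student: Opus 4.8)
The plan is to treat the two claims separately; in each case I first normalize $f$ using Lemma \ref{lemma: normal modules iff} and then unwind the definition $S(f)(L)=\angles{f(L)}$. For (1), I may assume that $M$ is a saturated submodule of $N$ and that $f$ is the inclusion. The first observation is that a submodule $L\subseteq M$ which is saturated in $M$ is automatically saturated in $N$: if $x,y\in N$ with $x+y\in L$ and $y\in L$, then $x+y,y\in M$, so $x\in M$ by saturation of $M$ in $N$, and then $x\in L$ by saturation of $L$ in $M$. Hence $\angles{f(L)}=L$ for every $L\in S(M)$, so $S(f)$ is simply the inclusion $S(M)\hookrightarrow S(N)$, which is in particular injective. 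For the image to be downward closed, take $K\in S(N)$ with $K\leq L$ for some $L\in S(M)$, i.e.\ $K\subseteq L\subseteq M$; then $K$ is a submodule of $M$, and it is saturated in $M$ because it is saturated in $N$, so $K\in S(M)$ lies in the image of $S(f)$. Since $S(f)$ is a morphism of algebraic lattices by construction of the functor $S$, this shows it is a normal monomorphism.

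For (2), I may assume $N=M/T$ for a saturated submodule $T$ of $M$ and that $f=\pi$ is the natural projection, and I take $x_0=T\in S(M)$. The crux is a correspondence theorem: $L\mapsto\pi^{-1}(L)$ should be an order isomorphism from $S(M/T)$ onto the up-set $\{L'\in S(M):L'\supseteq T\}$, with inverse $L'\mapsto\pi(L')$. That $\pi^{-1}(L)$ is saturated and contains $T=\pi^{-1}(0_{M/T})$ follows from Lemma \ref{lemma: saturation inverse} and Proposition \ref{proposition: saturation closure}. The two points needing a short computation are that $\pi(L')$ is saturated in $M/T$ whenever $T\subseteq L'$ — lift representatives and use that $T\subseteq L'$ together with idempotency of addition in $\mathbb{B}$-modules — and that $\pi^{-1}(\pi(L'))=L'$ for such $L'$, which again uses that $L'$ is saturated and contains $T$. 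Order-preservation of both maps is immediate, so they furnish a lattice isomorphism.

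It then remains to identify $S(\pi)$ under this isomorphism. Since $S(\pi)(L')=\angles{\pi(L')}$, it suffices to prove $\pi^{-1}(\angles{\pi(L')})=L'\vee T$ (join in $S(M)$) for all $L'\in S(M)$: the inclusion $\supseteq$ holds because $\pi^{-1}(\angles{\pi(L')})$ is a saturated submodule of $M$ containing both $L'$ and $T$, and $\subseteq$ follows by applying $\pi^{-1}$ to $\angles{\pi(L')}\subseteq\pi(L'\vee T)$, using that $\pi(L'\vee T)$ is saturated (as $L'\vee T\supseteq T$) and that $\pi^{-1}(\pi(L'\vee T))=L'\vee T$. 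Thus, under the isomorphism $S(M/T)\cong\{y\in S(M):y\geq T\}$, the map $S(\pi)$ becomes $L'\mapsto L'\vee T$, which is exactly joining with $x_0=T$; hence $S(\pi)$ is a normal epimorphism in $\mathcal{L}$.

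I expect the main obstacle to be precisely this part-(2) bookkeeping: verifying that $\pi(L')$ is saturated and that $S(\pi)$ is literally the join-with-$T$ map under the identification, where one repeatedly needs that $T$ lies inside the submodules being pushed forward and that addition in a $\mathbb{B}$-module is idempotent. Part (1), by contrast, is comparatively routine once one notes that saturatedness of submodules of $M$ transfers to $N$.
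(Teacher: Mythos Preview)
Your proof is correct and follows essentially the same approach as the paper: for (1) you identify $S(f)$ with the inclusion $S(M)\hookrightarrow S(N)$ and check downward closure, and for (2) you establish the correspondence $S(M/T)\cong\{L'\in S(M):L'\supseteq T\}$ and verify that $S(\pi)$ becomes joining with $T$ under it. The paper does exactly this, only more tersely --- it simply asserts that ``$S(M/L)$ consists of saturated submodules of $M$ containing $L$'' and that $S(f)$ sends $K$ to $\langle K\cup L\rangle$ --- so your version is a fleshed-out form of the same argument.
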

\begin{proof}
$(1):$ From Lemma \ref{lemma: normal modules iff}, $f$ is a normal monomorphism in $\textrm{Mod}_\mathbb{B}$ if and only if $f(M)$ is a saturated submodule of $N$. In particular, this implies that $S(f):S(M) \to S(N)$ is an inclusion of algebraic lattices which is downward closed, showing that $S(f)$ is a normal monomorphism in $\mathcal{L}$. 

$(2):$ We first claim that if $L$ is a saturated submodule of a $\mathbb{B}$-module $M$, then we have
\[
S(M/L)=S(M)/S(L).
\]
In fact, the algebraic lattice $S(M/L)$ consists of saturated submodules of $M$ containing $L$. This is precisely the definition of $S(M)/S(L)$ as an algebraic lattice. Now, from Lemma \ref{lemma: normal modules iff}, if $f:M \to N$ is a normal epimorphism, then we may assume that $f:M \to M/L$ for some saturated submodule $L$ of $M$, where $f$ is a natural projection. In particular, one can easily check that $S(f):S(M) \to S(M/L)=S(M)/S(L)$ sends any saturated submodule $K$ to $\angles{K \cup L}$, i.e., $S(f)$ is given by joining with $L$ (as an element of $S(M)$). This shows that $S(f)$ is a normal epimorphism. 
\end{proof}

\begin{pro}\label{proposition: B-mdoule subcategory}
The functor $S:\emph{Mod}_\mathbb{B} \to \mathcal{L}$ is fully faithful and essentially surjective. Moreover, $S$ is an exact functor. 
\end{pro}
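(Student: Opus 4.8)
The plan is to check the three assertions in turn, relying on the explicit formula for $S(f)$ obtained just above together with the identification of the compact elements of $S(M)$ with the principal saturated submodules $\langle m\rangle=\{a\in M\mid a\le m\}$ (Lemma \ref{lemma: compact lemma}, via Proposition \ref{proposition: saturated equiv}). Two elementary facts about a $\mathbb{B}$-module $M$ will be used repeatedly: $\langle m\rangle=\langle m'\rangle$ forces $m=m'$, since $m$ is the maximum of $\langle m\rangle$; and $\langle m_1+m_2\rangle=\langle m_1\rangle\vee\langle m_2\rangle$ in $S(M)$, because the right-hand side is the saturated submodule generated by $\langle m_1\rangle\cup\langle m_2\rangle$, which contains $m_1+m_2$ and, being down-closed, equals $\langle m_1+m_2\rangle$. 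Essential surjectivity is then immediate from Theorem \ref{theorem: lattice B-mdoule thm}(2): for an algebraic lattice $L$ the $\mathbb{B}$-module $L^c$ satisfies $L\cong S(L^c)$, and a lattice isomorphism between algebraic lattices automatically preserves joins and hence compactness, so it is an isomorphism in $\mathcal{L}$.

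For full faithfulness I would argue as follows. By Lemma \ref{lemma: compact lem1}, applied to $K=\langle m\rangle$, the defining formula for $S(f)$ gives $S(f)(\langle m\rangle)=\langle f(m)\rangle$ for every $m\in M$; hence $S(f)=S(g)$ forces $f(m)=g(m)$ for all $m$, i.e. $f=g$, proving faithfulness. For fullness, let $\phi\colon S(M)\to S(N)$ be a morphism in $\mathcal{L}$. Since $\phi$ preserves compact elements, $\phi(\langle m\rangle)=\langle f(m)\rangle$ for a unique $f(m)\in N$, and since $\phi$ preserves the bottom element $\langle 0\rangle=\{0\}$ we get $f(0)=0$. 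Using $\langle m_1+m_2\rangle=\langle m_1\rangle\vee\langle m_2\rangle$ and that $\phi$ preserves joins, $\langle f(m_1+m_2)\rangle=\phi(\langle m_1\rangle)\vee\phi(\langle m_2\rangle)=\langle f(m_1)+f(m_2)\rangle$, so $f$ is a morphism of $\mathbb{B}$-modules. Finally $S(f)$ and $\phi$ are both join-preserving maps $S(M)\to S(N)$ that agree on the join-dense subset of compact elements, so $S(f)=\phi$.

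It remains to prove exactness. Having shown $S$ is fully faithful and essentially surjective, $S$ is an equivalence of categories, hence preserves pullbacks and pushouts, in particular bi-Cartesian squares, and $S(0)=0$. Combined with Lemma \ref{lemma: admissible lemma} — which says $S$ carries normal monomorphisms (resp. epimorphisms) of $\mathbb{B}$-modules to normal monomorphisms (resp. epimorphisms) of $\mathcal{L}$ — this shows $S$ sends the defining bi-Cartesian square of an admissible short exact sequence $\ses{A}{B}{C}$ in $\Mod_{\mathbb{B}}$ to that of $\ses{S(A)}{S(B)}{S(C)}$ in $\mathcal{L}$, so $S$ is exact. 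The step demanding the most care is fullness: one must verify that the pointwise-defined $f$ is actually additive, and this is precisely where the identity $\langle m_1+m_2\rangle=\langle m_1\rangle\vee\langle m_2\rangle$ and the join-density of the compact elements of an algebraic lattice come in; everything else is formal once Lemmas \ref{lemma: compact lemma}, \ref{lemma: compact lem1} and \ref{lemma: admissible lemma} are in hand.
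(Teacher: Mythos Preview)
Your proof is correct and follows essentially the same route as the paper: essential surjectivity from Theorem \ref{theorem: lattice B-mdoule thm}, faithfulness via $S(f)(\langle m\rangle)=\langle f(m)\rangle$, fullness by restricting a given $\phi$ to compact elements and using the identification $S(M)^c\cong M$, and exactness from Lemma \ref{lemma: admissible lemma} together with $S$ being an equivalence. If anything, you have been more careful than the paper in a couple of places---spelling out the identity $\langle m_1+m_2\rangle=\langle m_1\rangle\vee\langle m_2\rangle$ needed for additivity of the candidate $f$, and noting explicitly that the isomorphism $L\cong S(L^c)$ is an isomorphism in $\mathcal{L}$ rather than merely of posets.
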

\begin{proof}
It follows from Theorem \ref{theorem: lattice B-mdoule thm} that $S$ is essentially surjective. Once we prove that $S$ is also fully faithful, it follows from Lemma \ref{lemma: admissible lemma} that the functor $S$ is exact. So, we only have to prove that $S$ is fully faithful.
	
Suppose that $S(f)=S(g)$ for $f,g \in \Hom(M,N)$ for $\mathbb{B}$-modules $M,N$. Note that for any $x \in M$ the saturated submodule of $M$ generated by $x$ is $\{m \in M \mid m \leq x\}$. In particular, this implies that if $S(f)=S(g)$, then $f=g$, showing that the functor $S$ is faithful. 

Next, consider $\alpha:S(M) \to S(N)$. This restricts to $\alpha^c:S(M)^c \to S(N)^c$ since $\alpha$ sends compacts elements to compact elements. With the isomorphism in Theorem \ref{theorem: lattice B-mdoule thm}, we have $S(M)^c\simeq M$ and $S(N)^c\simeq N$. One can easily see that  $\alpha^c \in \Hom(M,N)$ and $S(\alpha^c)=\alpha$, showing that the functor $S$ is full. 
\end{proof}

\begin{cor}\label{corollary: main theorem lattice and module}
Let $\emph{Mod}_\mathbb{B}^{\emph{fin}}$ be the category of finite $\mathbb{B}$-modules and $\mathcal{L}^c$ be the subcategory of $\mathcal{L}$ consisting of finite algebraic lattices. Then, $\emph{Mod}_\mathbb{B}^{\emph{fin}}$ is equivalent to $\mathcal{L}^c$ as proto-exact categories.
\end{cor}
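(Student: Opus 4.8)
The plan is to deduce this from Proposition \ref{proposition: B-mdoule subcategory} and Theorem \ref{theorem: lattice B-mdoule thm} by checking that everything restricts compatibly to the finite objects, and then exhibiting an explicit exact quasi-inverse. First I would verify that the functor $S$ of Proposition \ref{proposition: B-mdoule subcategory} carries $\Mod_{\mathbb{B}}^{\mathrm{fin}}$ into $\mathcal{L}^c$: if $M$ is a finite $\mathbb{B}$-module then $S(M)$, being a set of subsets of $M$, is a finite lattice. Conversely, a finite lattice $L$ is algebraic with $L^c = L$, so by Theorem \ref{theorem: lattice B-mdoule thm}(2) the $\mathbb{B}$-module $L^c$ is finite and $L \simeq S(L^c)$ in $\mathcal{L}$; hence the restriction $\bar S \colon \Mod_{\mathbb{B}}^{\mathrm{fin}} \to \mathcal{L}^c$ is essentially surjective. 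Being the restriction of the fully faithful functor $S$ to full subcategories, $\bar S$ is again fully faithful, so it is an equivalence of categories. Its exactness is immediate from exactness of $S$ (Proposition \ref{proposition: B-mdoule subcategory}), because the proto-exact structures on $\Mod_{\mathbb{B}}^{\mathrm{fin}}$ and $\mathcal{L}^c$ are the ones induced from the ambient categories, and $S$ of a finite object is finite.

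Next I would produce an explicit quasi-inverse $C \colon \mathcal{L}^c \to \Mod_{\mathbb{B}}^{\mathrm{fin}}$, sending a finite lattice $L$ to $L$ viewed as a $\mathbb{B}$-module (the join-semilattice with neutral element the bottom), and a morphism of algebraic lattices to the same underlying map viewed as a $\mathbb{B}$-module morphism; this is well defined since a morphism of algebraic lattices preserves arbitrary joins, hence the bottom element, and the compactness condition of Definition \ref{defnition: morphis of algebraic lattices} is vacuous for finite lattices. Theorem \ref{theorem: lattice B-mdoule thm} supplies natural isomorphisms $S \circ C \simeq \id$ and $C \circ S \simeq \id$, so $C$ is genuinely a quasi-inverse of $\bar S$. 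It then remains to check that $C$ is exact. Preservation of bi-Cartesian squares (and of the zero object) is automatic, since $C$ is one half of an equivalence and equivalences preserve limits and colimits. For the admissible classes: if $f \colon L_1 \to L_2$ is a normal monomorphism in $\mathcal{L}$, then $f$ is injective and $f(L_1)$ is a downward closed submodule of $L_2$, hence saturated by Proposition \ref{proposition: saturated equiv}, so $C(f)$ is a normal monomorphism in $\Mod_{\mathbb{B}}$ by Lemma \ref{lemma: normal modules iff}. If $f \colon L_1 \to L_2$ is a normal epimorphism, then up to isomorphism $L_2 = \{y \in L_1 \mid y \geq x_0\}$ with $f$ the join-with-$x_0$ map; writing $\angles{x_0} = \{y \in L_1 \mid y \leq x_0\}$ (a saturated submodule), one checks that the congruence on $L_1$ defining $L_1/\angles{x_0}$ is exactly $a \sim b \iff a \vee x_0 = b \vee x_0$, whence $L_1/\angles{x_0} \simeq \{y \geq x_0\}$ with projection equal to $f$, so $C(f)$ is a normal epimorphism in $\Mod_{\mathbb{B}}$. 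As all objects are finite, these conclusions hold in the subcategories, so $C$ is exact, and $\bar S$ together with $C$ is the desired equivalence of proto-exact categories.

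The one genuinely non-formal point I expect to be the crux is this last computation, the identification of the principal order filter $\{y \geq x_0\}$ with the $\mathbb{B}$-module quotient $L_1/\angles{x_0}$. This is precisely what is needed to conclude $C$ is exact: exactness of the equivalence $\bar S$ by itself does \emph{not} force $\bar S$ (equivalently $C$) to \emph{reflect} admissible monomorphisms and epimorphisms, so the computation is exactly the converse of Lemma \ref{lemma: admissible lemma} in the finite setting, and without it one cannot simply invoke ``the quasi-inverse of an exact equivalence is exact.'' Everything else in the argument is bookkeeping around Proposition \ref{proposition: B-mdoule subcategory} and Theorem \ref{theorem: lattice B-mdoule thm}.
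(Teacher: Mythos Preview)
Your argument is correct and follows the same overall strategy as the paper: restrict the equivalence of Proposition~\ref{proposition: B-mdoule subcategory} to finite objects and then verify that the quasi-inverse is exact by checking it preserves the admissible classes. The execution differs, however. The paper argues through $S^{-1}$ indirectly: given that $S(f)$ is a normal monomorphism (resp.\ epimorphism) of finite lattices, it deduces properties of $f$ by translating back through $S$ using the identities $S(f)(\langle x\rangle)=\langle f(x)\rangle$ and Lemma~\ref{lemma: common lemma}; this leads to a somewhat longer chase, especially in the mono case. You instead pick the explicit quasi-inverse $C$ that is the identity on underlying sets and work directly with the lattice-side definitions: downward closed image $\Rightarrow$ saturated image via Proposition~\ref{proposition: saturated equiv}, and the identification $L_1/\langle x_0\rangle\cong\{y\geq x_0\}$ via the congruence $a\sim b\iff a\vee x_0=b\vee x_0$. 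This buys you a shorter, more transparent verification; the paper's route has the minor advantage of never leaving the language of $S$, but at the cost of an extra translation layer. Your observation that exactness of $\bar S$ alone does not force its quasi-inverse to reflect the admissible classes, and that the filter--quotient identification is the genuine content, is exactly right and matches what the paper is implicitly doing.
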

\begin{proof}
The equivalence of categories in Proposition \ref{proposition: B-mdoule subcategory} restricts to the equivalence of the categories $\textrm{Mod}_\mathbb{B}^{\textrm{fin}}$ and $\mathcal{L}^c$. Hence, we have a quasi-inverse:
\[
S^{-1}:\mathcal{L}^c \to \textrm{Mod}_\mathbb{B}^{\textrm{fin}}.
\]
It is enough to prove that $S^{-1}$ is exact. In fact, we only have to prove that $S^{-1}$ sends normal monomorphisms (resp.~normal epimorphisms) in $\mathcal{L}^c$ to normal monomorphisms (resp.~normal epimorphisms) in $\textrm{Mod}_\mathbb{B}^{\textrm{fin}}$.

Suppose that $S(f):S(M) \to S(N)$ is a normal monomorphism of finite algebraic lattices. From Lemma \ref{lemma: normal modules iff}, it is enough to show that $f$ is an injection and $f(M)$ is isomorphic to a saturated submodule of $N$. It follows from Lemmas \ref{lemma: saturation closure} and \ref{lemma: vee lemma} that for any $x \in M$ we have
\begin{equation}\label{eq: exact}
S(f)(\angles{x})=\angles{f(x)} \textrm{ and } \angles{x}=\{y \in M \mid y \leq x\}.
\end{equation}
In particular, $f$ is an injection; if $\angles{f(x)}=\angles{f(y)}$, then since $S(f)$ is injective, we have $\angles{x}=\angles{y}$. This implies that $x \leq y$ and $y \leq x$ and hence $x=y$.

Furthermore, $f(M)$ is a saturated submodule of $N$. To see this, suppose that we have $x \in N$ and $y \in f(M)$ such that $x+y \in f(M)$. We have that $x \leq x+y$, hence $\angles{x} \leq \angles{x+y}$ in $S(N)$. But, since $S(f)$ is normal, the image $S(f)(S(M))$ is downward closed. It follows that $\angles{x}=S(f)(L)$ for some saturated submodule $L$ of $M$, in particular, $\angles{x}$ is the saturation closure of $f(L)$. This implies that there is some $q \in L$ such that $x \leq f(q)$. But, on the other hand, $f(q)$ is in $f(L)$ and hence is in $\angles{x}$, showing that $f(q) \leq x$. This implies that $x \in f(M)$.  Therefore $f(M)$ is a saturated submodule of $N$.

Next, suppose that $S(f):S(M) \to S(N)$ is a normal epimorphism of finite algebraic lattices. Write $S(N)=S(M)/L$ for some algebraic sub-lattice $L$. Let $H$ be the maximal element of $L$. We note that since $S(M)$ and $S(N)$ are finite algebraic lattices, any element is compact, and hence from Theorem \ref{theorem: lattice B-mdoule thm} any element of $S(M)$ (resp.~$S(N))$ corresponds to a unique element in $M$  (resp.~$N$).

We first claim that $f:M \to N$ is surjective. Let $y \in N$. Then, for the corresponding compact element $\angles{y} \in S(N)$, it follows from Lemma \ref{lemma: common lemma} that there is some compact element $\angles{x} \in S(M)$ such that $\angles{y}$ is the join of $H$ and $\angles{x}$. In other words, we have
\[
\angles{y} = H \vee \angles{x} =S(f)(\angles{x}) 
\] 
In particular, $\angles{y}=\angles{f(x)}$, and hence $y=f(x)$. This shows that $f$ is surjective.

Now, if $a \in H$, then $\angles{a}$ is a subset of $H$, hence is in $L$ since $L$ is downwards closed. This implies that $S(f)(\angles{a})=0$, and hence $f(a)=0$. In particular, $f$ factors through $M/H$. Suppose $x,y \in M$ satisfy $f(x)=f(y)$. Then we have $f(\angles{x})=f(\angles{y})$, i.e., $\angles{x,H}=\angles{y,H}$. Therefore, there exist $h_1,h_2 \in H$ such that 
\[
x\leq y+h_1, \quad y \leq x+h_2, 
\]
which implies that
\[
x+(h_1+h_2)=y+(h_1+h_2).
\]
It follows that $x$ and $y$ determine the same equivalence class in $M/H$, and hence the induced map $M/H \to N$ is injective, showing that $N=M/H$. This shows that $f$ is a normal epimorphism.
\end{proof}




\begin{myeg}
	Consider the following algebraic lattices. 
	\[
	L'=\left(\begin{tikzcd}[row sep=1em]	
		& \mycirc \arrow[dl,dash]  \arrow[dr,dash] \arrow[d,dash]\\
		\mycirc & \mycirc& \mycirc 
	\end{tikzcd}\right), \quad L''=\left(\begin{tikzcd}[row sep=1em]	
		& \mycirc \arrow[dl,dash]  \arrow[dr,dash]\\
		\mycirc \arrow[dr,dash]& & \mycirc \arrow[dl,dash]  \\
		& \mycirc & 
	\end{tikzcd}\right)
	\]
	The following are some possible algebraic lattices $L$ which fit into a short exact sequence $L' \to L \to L''$, where green dots belong to $L'$, the red dots belong to $L''$, and the blue dot belongs to both $L'$ and $L''$.
	\[
	L_1=\left(\begin{tikzcd}[row sep=1em]	
		& \mycirc[red] \arrow[dl,dash]  \arrow[dr,dash]\\
		\mycirc[red] \arrow[dr,dash]& & \mycirc[red] \arrow[dl,dash] \\
		& \mycirc[blue] \arrow[dl,dash]  \arrow[dr,dash] \arrow[d,dash]\\
		\mycirc[green] & \mycirc[green]& \mycirc[green] 
	\end{tikzcd}\right), ~ L_2= \left(\begin{tikzcd}[row sep=1em]	
		& \mycirc[red] \arrow[dl,dash]  \arrow[dr,dash]\\
		\mycirc[red] \arrow[dd,dash] \arrow[dr,dash]& & \mycirc[red] \arrow[dl,dash] \\
		& \mycirc[blue] \arrow[dl,dash]  \arrow[dr,dash] \arrow[d,dash]\\
		\mycirc[green] & \mycirc[green]& \mycirc[green] 
	\end{tikzcd}\right),~L_3= \left(\begin{tikzcd}[row sep=1em]	
		& \mycirc[red] \arrow[dl,dash]  \arrow[dr,dash]\\
		\mycirc[red] \arrow[dr,dash]& & \mycirc[red] \arrow[dl,dash] \arrow[dd,dash] \\
		& \mycirc[blue] \arrow[dl,dash]  \arrow[dr,dash] \arrow[d,dash]\\
		\mycirc[green] & \mycirc[green]& \mycirc[green] 
	\end{tikzcd}\right)
	\]
	\[
	L_4= \left(\begin{tikzcd}[row sep=1em]	
		& \mycirc[red] \arrow[dl,dash]  \arrow[dr,dash]\\
		\mycirc[red] \arrow[dr,dash] \arrow[dd,dash]& & \mycirc[red] \arrow[dl,dash] \arrow[dd,dash] \\
		& \mycirc[blue] \arrow[dl,dash]  \arrow[dr,dash] \arrow[d,dash]\\
		\mycirc[green] & \mycirc[green]& \mycirc[green] 
	\end{tikzcd}\right)
	\]
\end{myeg}

Even though the category $\mathcal{L}^c$ (or $\textrm{Mod}_\mathbb{B}^{\textrm{fin}}$) concerns only finite sets, surprisingly they are not finitary as the following simple example illustrates.

\begin{myeg}\label{example: pathology}
	Let $n$ be a positive integer, and $L_n=\{0,a_1,\dots,a_n,1\}$ be an algebraic lattice such that $0$ is the smallest element, $1$ is the largest element, and $a_i$ are incomparable. Then $K=\{0,a_1\}$ is an algebraic sub-lattice of $L_n$ which is isomorphic to $\mathbb{B}$. The quotient $L_n/K$ consists of elements greater than or equal to $a_1$, that is $\{a_1,1\}$. So, one has the following short exact sequence for any positive integer $n$:
\begin{equation}
	\ses{\mathbb{B}}{L_n}{\mathbb{B}}.
\end{equation}
In other words, we have
	\begin{equation}\label{eq: lattice example}
		L_n \iff \left(\begin{tikzcd}[row sep=1em]	
			& & 1  \arrow[dddll,dash]  \arrow[dddl,dash, red, very thick] \arrow[ddd,dash] \arrow[dddrr,dash]  \arrow[dddr,dash]& &\\
			& & & & \\
			& & & & \\
			a_1 \arrow[dddrr,dash] & \cdots \arrow[dddr,dash,blue, very thick]& a_k \arrow[ddd,dash]& \cdots\arrow[dddl,dash] & a_n \arrow[dddll,dash]\\
			& & & & \\
			& & & & \\
			&	&   0 & &
		\end{tikzcd}\right)
	\end{equation}
	and each $\mathbb{B}$ corresponds to either a red lattice or a blue lattice in \eqref{eq: lattice example}. Since for $n \neq m$ $L_n$ is not isomorphic to $L_m$, the set $\textrm{Ext}_{\mathcal{L}^c}(\mathbb{B},\mathbb{B})$ is not finite, showing that $\mathcal{L}^c$ is not finitary. 
\end{myeg}

Finally, we consider geometric lattices. Since geometric lattices are finite, they are algebraic lattices. In particular, the proto-exact structure of $\mathcal{L}$ may provide the induced proto-exact structure for geometric lattices. We first recall the definition of geometric lattices, which provides a cryptomorphic definition for simple matroids. For details, we refer the reader to \cite{oxley2006matroid}. 

\begin{mydef}
Let $P$ be a finite poset. 
\begin{enumerate}
	\item 
A \emph{chain} in $P$ from $x_0$ to $x_n$ is a subset $\{x_0,x_1,\dots,x_n\}$ of $P$ such that 
\[
x_0 < x_1< \cdots < x_n.
\]
The length of a chain $\{x_0,\dots,x_n\}$ is $n$. A chain is said to be maximal if there is no element $z$ such that $x_i < z <x_{i+1}$ for all $i=0,\dots,n-1$. 
\item
$P$ is said to satisfy the \emph{Jordan-Dedekind chain condition} if for any pair $\{x,y\}\subseteq P$ with $x<y$, all maximal chains from $x$ to $y$ have the same length. 
\end{enumerate}
\end{mydef}

Let $P$ be a poset with the minimal element $0$ satisfying the Jordan-Dedekind chain condition. One can define the \emph{height function}
\[
h:P \to \mathbb{Z}
\]
such that $h(x)$ is the length of a maximal chain from $0$ to $x$. An \emph{atom} of $P$ is an element of height $1$.

\begin{mydef}
Let $L$ be a finite lattice. 
\begin{enumerate}
	\item 
$L$ is said to be a \emph{semimodular lattice} if $L$ satisfies the Jordan-Dedekind chain condition, and the height function on $L$ satisfies the following condition: for $x,y \in L$, 
\[
h(x)+h(y) \geq h(x\vee y) + h (x \land y).
\]
\item 
$L$ is said to be a \emph{geometric lattice} if $L$ is semimodular in which every element is a join of atoms. 
\end{enumerate}
\end{mydef}

\begin{pro}\label{proposition: matroid geometric lattices}
Let $\mathcal{G}$ be the subcategory of $\mathcal{L}$ consisting of geometric lattices. Then, $\mathcal{G}$ is a proto-exact subcategory of $\mathcal{L}$ with the same proto-exact structure. 
\end{pro}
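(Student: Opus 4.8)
The plan is to verify that $\mathcal{G}$ is closed under the operations that produce the admissible structure, so that the proto-exact axioms are inherited from $\mathcal{L}$ (Theorem \ref{theorem: main theorem for lattices}). Concretely, a subcategory of a proto-exact category inherits a proto-exact structure provided it is closed under isomorphisms, contains the zero object, and is closed under taking admissible sub- and quotient objects and under the bi-Cartesian completions used in axioms (4) and (5). Since every normal monomorphism in $\mathcal{L}$ identifies a domain with a downward-closed subset, and every normal epimorphism identifies a codomain with an up-set $\{x \ge x_0\}$, the two things I really need to check are: (i) if $L$ is a geometric lattice and $K \subseteq L$ is a downward-closed algebraic sub-lattice, then $K$ is geometric; and (ii) if $L$ is geometric and $x_0 \in L$, then the interval $L/K = \{x \in L : x \ge x_0\}$ is geometric. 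Given (i) and (ii), the pullback $M = (j')^{-1}(i'(M'))$ in Lemma \ref{lemma: pro4 lattice} and the quotient $N/L$ in Lemma \ref{lemma: eq5 lattice} both stay in $\mathcal{G}$, so all the completions land in $\mathcal{G}$ and the axioms transfer verbatim.

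First I would dispose of (i). A downward-closed subset $K$ of a finite lattice $L$ that is itself a lattice must be an \emph{order ideal} that is closed under the joins it does contain; since it is downward closed and contains $\hat 0$, its atoms are exactly the atoms of $L$ lying in $K$, and every element of $K$, being an element of $L$, is a join of atoms of $L$ — all of which lie below it and hence in $K$. So the ``every element is a join of atoms'' condition passes to $K$. For semimodularity: the height function of $K$ agrees with that of $L$ restricted to $K$ (a maximal chain from $\hat 0$ to $x$ inside $K$ is a maximal chain in $L$ because $K$ is downward closed — no element of $L \setminus K$ can be inserted below $x$), and meets in $K$ coincide with meets in $L$ (the meet of two elements of a down-set computed in $L$ lies in the down-set). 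Joins in $K$ can only be larger than (or equal to) joins in $L$, but in fact here $x \vee y$ computed in $L$ lies below any common upper bound, and if $x,y \in K$ their join in $L$ need not be in $K$; however $K$ being a sublattice forces the join in $K$ to equal the join in $L$ whenever the latter is in $K$, and the definition of algebraic sub-lattice in this context (Definition \ref{definition: algebraic sublattice}) together with downward-closure is exactly the statement that $K$ is an interval $[\hat 0, 1_K]$ — so $x \vee_K y = x \vee_L y$ always. Then the submodular inequality $h(x) + h(y) \ge h(x \vee y) + h(x \wedge y)$ is just the restriction of the one in $L$.

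Next (ii): the interval $[x_0, \hat 1]$ of a geometric lattice is geometric. This is a classical fact (see \cite{oxley2006matroid}): intervals in semimodular lattices are semimodular (the height function of the interval is $h_{[x_0,\hat 1]}(y) = h(y) - h(x_0)$ and the submodular inequality restricts), and the atoms of $[x_0,\hat 1]$ are the elements $x_0 \vee a$ covering $x_0$; geometricity of $L$ gives that every $y \ge x_0$ is a join of atoms $a_i$ of $L$, whence $y = x_0 \vee \bigvee_i a_i = \bigvee_i (x_0 \vee a_i)$, and by semimodularity each $x_0 \vee a_i$ is either $x_0$ or an atom of the interval — so $y$ is a join of atoms of $[x_0,\hat 1]$. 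I would cite this rather than reprove it, or give the two-line argument above. I also note that the zero object (the one-element lattice) is trivially geometric and that $\mathcal{G}$ is closed under isomorphism, so those axioms are immediate.

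The main obstacle I anticipate is not conceptual but bookkeeping: one has to be careful that ``downward-closed algebraic sub-lattice'' really does force $K$ to be a principal order ideal $[\hat 0, 1_K]$ in the finite case (it does, since a finite downward-closed join-sublattice containing a top element $1_K$ is exactly $\{x : x \le 1_K\}$), because that is what makes joins and the height function restrict cleanly; and dually that the ``$L/K$'' appearing in Lemmas \ref{lemma: pro4 lattice} and \ref{lemma: eq5 lattice} is literally the interval above $1_L$, so that (ii) applies. Once these identifications are nailed down, the transfer of axioms (2)–(5) is formal, and (3) again follows from (4) and (5) plus uniqueness of pullbacks/pushouts, exactly as in the proof of Theorem \ref{theorem: main theorem for lattices}.
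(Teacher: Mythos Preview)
Your proposal is correct and follows essentially the same approach as the paper: both reduce the claim to the classical fact that any interval of a geometric lattice is again geometric, after observing that admissible subobjects and quotients in $\mathcal{L}$ are precisely lower and upper intervals. The paper's proof is a two-sentence appeal to this fact, whereas you spell out the bookkeeping (that a downward-closed algebraic sub-lattice of a finite lattice is a principal order ideal $[\hat 0,1_K]$, and that the bi-Cartesian completions of Lemmas \ref{lemma: pro4 lattice} and \ref{lemma: eq5 lattice} therefore stay in $\mathcal{G}$) and sketch the interval-is-geometric argument; but the underlying idea is identical.
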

\begin{proof}
Clearly, the zero object (the empty lattice) is in $\mathcal{G}$. Hence, we only have to show that $\mathcal{G}$ is closed under taking subobjects and quotients. But, this is clear since any interval of a geometric lattice is again a geometric lattice. 
\end{proof}

\subsection{Further directions}\label{subsection: semiring further directions}

	In \cite{eppolito2018proto}, together with C.~Eppolito, the first and the second authors showed that the category of pointed matroids with strong maps is finitary and proto-exact, where admissible monomorphisms are strong maps that can be factored as
	\[
	N \overset{\sim}{\rightarrow} M \vert S \hookrightarrow M
	\]
	and admissible epimorphism are strong maps that can be factored as
	\[
	M \twoheadrightarrow M/S \overset{\sim}{\rightarrow} N
	\]
	In \cite[Section 2]{crapo1967structure}, H.~Crapo showed that a strong map between matroids corresponds to a function $f:L_1 \to L_2$ between geometric lattices satisfying the following three conditions:
	\begin{enumerate}
		\item 
		(non-singular) $f(x)=0_{L_2}$ implies $x=0_{L_1}$; 	
		\item 
		(join-homomorphism) $f(\sup X)=\sup f(X)$ for any $X \subseteq L_1$;	
		\item 
		(cover-preserving) The image of each atom in $L_1$ is either $0$ or an atom of $L_2$. 
	\end{enumerate}
	One can easily that Definition \ref{defnition: morphis of algebraic lattices} is equivalent to a function being a join-homomorphism for geometric lattices. In particular, our subcategory $\mathcal{G}$ of geometric lattices contains more morphisms than the category of geometric lattices with strong maps, and as a result we have a pathological examples such as Example \ref{example: pathology}. This leads us to the following question. 
	
\begin{question}
Is there a ``better'' notion of morphisms (as well as admissible morphisms) for algebraic lattices in such a way that the category of algebraic lattices is finitary and proto-exact? If so, when restricted to geometric lattices, do they define strong maps for matroids?
\end{question}

\section{The category of modules over a hyperring as a proto-exact category} \label{section: proto-exact hyperring}

In this section, we study the category of modules over a hyperring from the perspective of proto-exact categories. Throughout this section, let $H$ be a hyperring and $\Mod_H$ be the category of $H$-modules. We note that in \cite{madanshekaf2006exact}, certain categorical aspects of modules over a hyperring are studied, however it does not show that the category of modules over a hyperring is proto-exact.

\begin{mydef}\label{definition: proto-exact hyperring}
Let $\mathfrak{M}$ be the class of strict injective homomorphisms in $\Mod_H$ and $\mathfrak{E}$ be the class of strict surjective homomorphisms in $\Mod_H$. 
\end{mydef}

In the following, we show that $(\Mod_H,\mathfrak{M},\mathfrak{E})$ is a proto-exact category. It is clear that $\mathfrak{M}$ and $\mathfrak{E}$ contain all isomorphisms and are closed under composition. 

Let $B$ be an $H$-module, and $A$ a submodule of $B$. Then, $A$ defines an equivalence relation $\equiv$ on $B$ as follows: for $b_1,b_2 \in B$, 
\begin{equation}\label{eq: submoudle congruence}
b_1 \equiv b_2 \iff b_1+A = b_2+A,
\end{equation}
where $b_i+A=\bigcup_{a \in A} (b_i+a)$ and $b_1+A=b_2+A$ is equality of sets. 

\begin{mydef}\label{definition: two sets equiv}
Let $M$ be an $H$-module and $\equiv$ be an equivalence relation on $M$. For subsets $X,Y\subseteq M$, we write $X \equiv Y$ if the following two conditions holds:
\begin{enumerate}
	\item 
For any $x \in X$, there exists $y \in Y$ such that $x \equiv y$. 
\item 
For any $y \in Y$, there exists $x \in X$ such that $x \equiv y$. 
\end{enumerate}
\end{mydef}

\begin{mydef}
By a congruence relation on $M$, we mean an equivalence relation $\equiv$ on $M$ such that for any $x_i,y_i \in M$ for $i=1,2$ and $r \in H$, the following holds:
\[
\textrm{If } x_i \equiv y_i \textrm{ for $i=1,2$, then, $(x_1+x_2)\equiv (y_1+y_2)$ and $(rx_i)\equiv (ry_i)$,}
\]
where the notation $(x_1+x_2)\equiv (y_1+y_2)$ is as in Definition \ref{definition: two sets equiv}.
\end{mydef}

\begin{pro}\label{proposition: quotient}
Let $B$ be an $H$-module and $A$ be a submodule of $B$. Then, $B/\equiv$ (with $\equiv$ as in \eqref{eq: submoudle congruence}) is an $H$-module, with the following hyperaddition:
\[
[x]+[y]:=\{[z] \mid z \in x'+y' \emph{ such that } [x']=[x],[y']=[y]\}, 
\]
and an $H$-action $r[x]:=[rx]$, where $r[x]=\{rx' \mid x'\in [x]\}$. 
\end{pro}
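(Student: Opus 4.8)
The plan is to split the argument into two stages: first, that the relation $\equiv$ of \eqref{eq: submoudle congruence} is a \emph{congruence relation} on $B$ in the sense introduced above; second, that $B/{\equiv}$, endowed with the induced hyperaddition and the induced action $r\cdot[x]:=[rx]$, satisfies the hypergroup axioms of Definition~\ref{definition: hypergroup and morphisms} and the module axioms of Definition~\ref{definition: module over a hyperring}. I would establish the congruence stage first, since once $\equiv$ is known to be a congruence everything else is a routine transfer of the axioms from $B$ to $B/{\equiv}$ along the projection $B\to B/{\equiv}$.

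That $\equiv$ is an equivalence relation is immediate, as it is defined by an equality of subsets of $B$. For compatibility with hyperaddition, let $x_i\equiv y_i$ for $i=1,2$. Using associativity and commutativity of the hyperaddition (extended to subsets by \eqref{eq: hypersum of sets}) together with $x_i+A=y_i+A$, one computes
\[
(x_1+x_2)+A=x_1+(x_2+A)=x_1+(y_2+A)=y_2+(x_1+A)=y_2+(y_1+A)=(y_1+y_2)+A ,
\]
so the two hypersums have equal $A$-translate. To upgrade this to the elementwise relation $(x_1+x_2)\equiv(y_1+y_2)$ of Definition~\ref{definition: two sets equiv}: given $z\in x_1+x_2$ one has $z\in z+A\subseteq (x_1+x_2)+A=(y_1+y_2)+A$, hence $z\in w+a$ with $w\in y_1+y_2$ and $a\in A$, and using the reversibility axiom (condition~(4) of Definition~\ref{definition: hypergroup and morphisms}) to write $w\in z+(-a)$ with $-a\in A$ one deduces $z+A=w+A$, i.e.\ $z\equiv w$; the symmetric statement for elements of $y_1+y_2$ is the same. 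For compatibility with the $H$-action, fix $r\in H$ and suppose $x\equiv x'$; applying the single-valued scalar map $b\mapsto rb$ to both sides of $x+A=x'+A$ and using distributivity $r(b+a)=rb+ra$ (condition~(3) of Definition~\ref{definition: module over a hyperring}) reduces matters to comparing $rx+rA$ with $rx'+rA$, after which one argues, using $rA\subseteq A$, $0\in rA$, closure of $A$ under negation, and reversibility, that $rx+A=rx'+A$. Thus $\equiv$ is a congruence; in particular $\{[rx']:x'\in[x]\}=\{[rx]\}$, so $r\cdot[x]:=[rx]$ is a well-defined single-valued action, while the hyperaddition $[x]+[y]$ of the statement is representative-independent by construction.

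It remains to verify the axioms for $B/{\equiv}$. Associativity and commutativity of the induced hyperaddition, the fact that $[0]$ is the (necessarily unique) neutral element, the existence and uniqueness of inverses $-[x]=[-x]$, and the reversibility condition all follow by lifting an instance to representatives in $B$, applying the corresponding property there, and projecting, the congruence property guaranteeing independence of the chosen lifts; likewise the module identities $1[m]=[m]$, $0[m]=[0]$, $(rs)[m]=r(s[m])$, $r([m_1]+[m_2])=r[m_1]+r[m_2]$, and $(r+s)[m]=r[m]+s[m]$ descend directly from Definition~\ref{definition: module over a hyperring} for $B$. I expect the congruence stage --- and within it the scalar-compatibility step, with the passage from an equality of $A$-translates to the elementwise relation of Definition~\ref{definition: two sets equiv} a close second --- to be the main obstacle: as the remark following Definition~\ref{definition: submodule} makes clear, a submodule $A\subseteq B$ need not be closed under the ambient hyperaddition of $B$, so one cannot simply ``add $A$ to both sides'' of an identity, and the computation has to be routed through the reversibility axiom and the internal hypergroup structure of $A$.
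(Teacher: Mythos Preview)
Your approach is essentially the same as the paper's: establish that hyperaddition and scalar action descend well-definedly to $B/{\equiv}$, then verify the hypergroup and module axioms by lifting to $B$. The paper's proof is organized slightly differently: it delegates the hypergroup axioms to a citation (\cite[Proposition~3.16]{jun2018algebraic}), proves well-definedness of the $H$-action by the direct route (from $x'\in x+a$ conclude $rx'\in rx+ra$ and hence $rx'+A\subseteq rx+A$, rather than your detour through $rx+rA$), and then isolates as a separate lemma the identity
\[
[m]+[n]=[m+n],
\]
i.e.\ that the class of the hypersum depends only on the fixed representatives $m,n$. This identity is exactly your congruence statement $(x_1+x_2)\equiv(y_1+y_2)$ repackaged, but the paper records it explicitly because it is reused immediately as Corollary~\ref{corollary: strict surjective quotient}: strictness of the projection $B\to B/A$. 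If you follow your congruence route, it is worth extracting this identity at the end so that the corollary is available. Finally, your closing remark about submodules not being closed under the ambient hyperaddition is well taken; the paper's own argument (``$x\in(m+n)+(a_1+a_2)$, hence $x\in(m+n)+A$'') tacitly uses $a_1+_B a_2\subseteq A$, so both proofs rely on the same implicit closure hypothesis at this step.
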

\begin{proof}
One may apply a similar argument as in \cite[Proposition 3.16]{jun2018algebraic} to prove that $B/\equiv$ is a hypergroup. 	
	
Next, we claim that $H$-action is well-defined, i.e., if $[x']=[x]$, then $[rx']=[rx]$. In fact, if $[x']=[x]$, then we have
\[
x'+A=x+A, 
\]
in particular, $x' \in x+a$ for some $a \in A$. It follows that for any $r \in H$, we have $rx' \in rx+ra$, showing that  $rx'+A \subseteq rx+A$ since $ra \in A$. A similar argument shows that $rx+A \subseteq rx'+A$, and hence $[rx']=[rx]$.

Finally, it is clear that $0[x]=[0x]=[0]$ and $1[x]=[x]$. Suppose that $[m],[n] \in B/\equiv$. Then, we have
\begin{equation}\label{eq: strict}
[m]+[n]=[m+n]. 
\end{equation}
In fact, it is clear from the definition that $[m+n] \subseteq [m]+[n]$. Conversely, if $[x] \in [m]+[n]$, then we have $x \in m'+n'$ for some $[m']=[m]$ and $[n']=[n]$. But, since  $[m']=[m]$ and $[n']=[n]$, there exist $a_1,a_2 \in A$ such that
\[
m' \in m+a_1, \quad n' \in n+a_2.
\]
It follows that $x \in (m+n)+A$. Hence $[x] \in [m+n]$, showing that $[m]+[n]\subseteq [m+n]$. This implies that for any $r \in H$, we have $r([m]+[n])=r[m+n]=[rm+rn]=r[m]+r[n]$. Furthermore, for $r,s \in R$ and $[x] \in B/\equiv$ we have
\[
(r+s)[m]=[(r+s)m]=[rm+sm]=[rm]+[sm]=r[m]+s[m].
\] 
This shows that $B/\equiv$ is an $H$-module. 
\end{proof}

In what follows, we will denote the $H$-module $(B/\equiv)$ as in Proposition \ref{proposition: quotient} by $B/A$.

\begin{cor}\label{corollary: strict surjective quotient}
Let $B$ be an $H$-module and $A$ be a submodule of $B$. Then, the canonical map $\pi:B \to B/A$ sending $x$ to $[x]$ is a strict surjective homomorphism of $H$-modules. 
\end{cor}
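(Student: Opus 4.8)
The plan is to read off every required property directly from Proposition~\ref{proposition: quotient}, whose proof already contains the crucial identity \eqref{eq: strict}. First I would observe that $\pi$ is visibly well defined and surjective: by construction of $B/A = (B/\!\equiv)$, every class $[x]$ is of the form $\pi(x)$ for some $x \in B$, so there is nothing to check for surjectivity.

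Next I would verify that $\pi$ is a morphism of $H$-modules in the sense of Definition~\ref{definition: module over a hyperring}. Clearly $\pi(0_B) = [0_B] = 0_{B/A}$. For $x, y \in B$, applying $\pi$ to the hypersum $x +_B y \subseteq B$ yields the set $\{[z] : z \in x +_B y\}$, which is precisely $[x+y]$ in the notation used in Proposition~\ref{proposition: quotient}; by the identity $[m]+[n] = [m+n]$ established as \eqref{eq: strict}, this set equals $[x]+[y] = \pi(x)+\pi(y)$. In particular $\pi(x +_B y) \subseteq \pi(x)+\pi(y)$, so $\pi$ is a homomorphism of hypergroups, and compatibility with the $H$-action is immediate from the definition $r[x] = [rx]$, giving $\pi(rx) = [rx] = r[x] = r\pi(x)$.

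Finally, strictness (Definition~\ref{definition: hypergroup and morphisms}) is exactly the equality $\pi(x +_B y) = \pi(x)+\pi(y)$ noted in the previous step, which is again just \eqref{eq: strict}; hence $\pi \in \mathfrak{E}$.

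The only (very mild) obstacle is notational rather than mathematical: one must be careful that ``$\pi$ applied to the hypersum $x +_B y$'' denotes the set-image $\{[z] : z \in x +_B y\}$, and that this set-image coincides with the set $[x+y]$ appearing in \eqref{eq: strict}. Once this identification is in place, no argument beyond Proposition~\ref{proposition: quotient} is required, and the corollary follows.
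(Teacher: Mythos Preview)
Your proposal is correct and takes essentially the same approach as the paper: the paper's own proof is a one-line reference to \eqref{eq: strict} in Proposition~\ref{proposition: quotient}, and you simply unpack why that identity yields surjectivity, the homomorphism property, $H$-linearity, and strictness. Your additional care in identifying the set-image $\pi(x+_B y)$ with $[x+y]$ is a helpful clarification but does not deviate from the paper's argument.
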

\begin{proof}
This is proved in Proposition \ref{proposition: quotient} in \eqref{eq: strict}. 
\end{proof}

\begin{lem}\label{lemma: bi-cat}
Let $(\Mod_H,\MM,\EE)$ be as in Definition \ref{definition: proto-exact hyperring}. Let $i,i' \in \mathfrak{M}$ and $j,j' \in \mathfrak{E}$, $A$ an $H$-module, and $C$ a submodule of $A$. Then the following commutative diagram is a bi-Cartesian diagram in $\Mod_H$.
\begin{equation}\label{eq: Bicartesian}
\begin{tikzcd}
A
\ar[r,"i",hook]
\ar[d,"j",two heads,swap]
&
B
\ar[d,"j'",two heads]
\\
A/C
\ar[r,"i'",hook]
&
B/C
\end{tikzcd}
\end{equation}
\end{lem}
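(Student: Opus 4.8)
The plan is to adapt the strategy of Lemma~\ref{lemma: bi-Cartesian lemma} (the semiring case), with the manipulations of saturated submodules replaced by manipulations of the set-valued hyperaddition. Throughout I identify $A$ with its image $i(A)\subseteq B$, so that $C$ is also a submodule of $B$; by Corollary~\ref{corollary: strict surjective quotient} the projections $j\colon A\to A/C$ and $j'\colon B\to B/C$ are strict surjections, and $i'\colon A/C\to B/C$ is the induced map $[a]\mapsto[i(a)]$. As a preliminary I would record that $i'$ is a well-defined strict injection: strictness of $i$ forces the congruence \eqref{eq: submoudle congruence} defining $A/C$ to be exactly the restriction to $i(A)$ of the one defining $B/C$, which yields well-definedness and injectivity of $i'$, and strictness of $i'$ then follows from the identity $[m]+[n]=[m+n]$ established in Proposition~\ref{proposition: quotient}. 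Commutativity of \eqref{eq: Bicartesian} is immediate.

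For the co-Cartesian property, given an $H$-module $K$ and morphisms $\alpha\colon A/C\to K$, $\beta\colon B\to K$ with $\beta i=\alpha j$, the map $\gamma\colon B/C\to K$ must be given by $\gamma([b]):=\beta(b)$ (forced by $\gamma j'=\beta$). The delicate step --- the analogue of showing in the semiring case that $\beta$ vanishes on the relevant submodule --- is to check that this is well defined, i.e.\ that $\beta$ is constant on each class of \eqref{eq: submoudle congruence}: using $\beta i=\alpha j$ and the fact that $j$ collapses $C$, from $[b_1]=[b_2]$ in $B/C$ one gets $b_1\in b_2+c$ for some $c\in C$, whence $\beta(b_1)\in\beta(b_2)+\beta(c)=\{\beta(b_2)\}$, so $\beta(b_1)=\beta(b_2)$. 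Granting well-definedness, that $\gamma$ is a morphism of $H$-modules is a direct computation from $[m]+[n]=[m+n]$; the relation $\gamma j'=\beta$ holds by construction, $\gamma i'=\alpha$ follows from $\beta i=\alpha j$, and uniqueness of $\gamma$ follows from surjectivity of $j'$.

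For the Cartesian property, given $K$ and morphisms $\alpha\colon K\to A/C$, $\beta\colon K\to B$ with $i'\alpha=j'\beta$, I would first prove $\beta(K)\subseteq i(A)$. For $x\in K$, writing $\alpha(x)=[a]$ with $a\in A$, the relation $i'\alpha=j'\beta$ gives $[i(a)]=[\beta(x)]$ in $B/C$, hence $\beta(x)\in i(a)+c$ for some $c\in C\subseteq i(A)$; since $i$ is \emph{strict}, $i(a)+c=i(a)+i(c)=i(a+_A c)\subseteq i(A)$, so $\beta(x)\in i(A)$. Then $\gamma:=i^{-1}\circ\beta\colon K\to A$ is a well-defined morphism (again using strictness and injectivity of $i$, and that $i$ is $H$-linear), with $i\gamma=\beta$ by construction and $j\gamma=\alpha$ because $i'(j\gamma(x))=[\beta(x)]=i'(\alpha(x))$ and $i'$ is injective; uniqueness is forced by injectivity of $i$. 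Combining the two halves shows that \eqref{eq: Bicartesian} is bi-Cartesian.

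The main obstacle throughout is bookkeeping with the set-valued hyperaddition: one must consistently distinguish $\subseteq$ from $=$ for hypersums, and it is the strictness hypothesis on $i$ (not merely injectivity) that does the real work, both in identifying the congruences on $A$ and $B$ and in establishing $\beta(K)\subseteq i(A)$. The single genuinely subtle verification is the well-definedness of $\gamma$ in the co-Cartesian case; everything else is a routine, if careful, diagram chase.
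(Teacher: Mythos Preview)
Your proposal is correct and follows essentially the same approach as the paper's proof: you set up $i'$ as a strict injection, then verify the co-Cartesian property by defining $\gamma([b])=\beta(b)$ (using $\beta(c)=0$ for $c\in C$ to get well-definedness), and the Cartesian property by showing $\beta(K)\subseteq i(A)$ and factoring through $i$. If anything, your write-up is slightly more careful than the paper's in making explicit the role of strictness of $i$ in the step $i(a)+c=i(a+_A c)\subseteq i(A)$, which the paper leaves implicit.
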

\begin{proof}
We first note that since $C$ is a submodule of $A$ and $A$ is a submodule of $B$, $C$ is also a submodule of $B$. In particular the quotient $B/C$ is well-defined. 

Next, we claim that the induced map $i':A/C \to B/C$ sending $[x]$ to $[i(x)]$ is well-defined. Indeed, if $[x]=[x']$, then we have $x+C=x'+C$, in particular, $x' \in x+c$ for some $c \in C$. It follows that $i(x') \in i(x) +i(c)$, hence $i(x')+C \subseteq i(x)+C$. By the same argument, one sees that $i(x)+C \subseteq i(x')+C$, showing that $[i(x)]=[i(x')]$. Hence $i'$ is well-defined. Furthermore $i'$ is an injection since for $[i(x)]=[i(y)]$, we have $i(x) \in i(y)+i(c)=i(y+c)$ for some $c \in C$ implying $[x]=[y]$. Finally, $i'$ is strict since, by Corollary \ref{corollary: strict surjective quotient}, 
\[
i'([x]+[y])=i'([x+y])=[i(x+y)]=[i(x)+i(y)]=[i(x)]+[i(y)].
\]
This shows that $i'$ is an admissible monomorphism. In particular, the diagram \eqref{eq: Bicartesian} is well-defined. 

(co-Cartesian) Suppose that we have the following commutative diagram with an $H$-module $D$ and $H$-module homomorphisms $\alpha$ and $\beta$:
 \begin{equation}\label{eq: pushout}
\begin{tikzcd}
A \ar[hook]{r}{i} \ar[swap,two heads]{d}{\pi} & B \ar[swap,two heads]{d}{\pi'}
\ar[bend left]{ddr}{\beta} &
\\
A/C \ar[hook]{r}{i'} \ar[swap, bend right]{rrd}{\alpha} & B/C \ar[dotted]{dr}{\gamma} &
\\
& & D
\end{tikzcd}
\end{equation}
We define $\gamma:B/C \to D$ by $\gamma([x])=\beta(x)$. We claim that $\gamma$ is well-defined. Indeed, if $[x]=[y]$ in $B/C$, then we have $x+C=y+C$, hence $x \in y+n$ for some $n \in C$. From the commutativity of the diagram \eqref{eq: pushout}, we have
\[
\beta(n)=\beta(i(n))= \alpha(\pi(n))=0.
\]
In particular, we have
\[
\beta(x) \in \beta(y+n) \subseteq \beta(y)+\beta(n)=\beta(y),
\]
showing that $\beta(x)=\beta(y)$, and hence $\gamma([x])=\gamma([y])$. Next, we claim that $\gamma$ is a homomorphism of $R$-modules. For $r \in R$, we have
\[
\gamma(r[x])=\gamma([rx])=\beta(rx)=r\beta(x)=r\gamma([x]). 
\]
Also, suppose that $[x],[y] \in B/C$ and $[z] \in [x]+[y]$. In other words, $z \in x'+y'$ for some $[x']=[x]$ and $[y']=[y]$ in $B/C$. It follows that
\[
\gamma([z])=\beta(z) \in \beta(x')+\beta(y')=\beta(x)+\beta(y)=\gamma([x])+\gamma([y]), 
\]
showing that $\gamma$ is a homomorphism of $H$-modules. It is clear that with $\gamma$, \eqref{eq: pushout} commutes. Furthermore, one can easily check that if a morphism $\delta:B/C \to D$ makes \eqref{eq: pushout} to commutes, then $\delta=\gamma$. Hence $\eqref{eq: Bicartesian}$ is a co-Cartesian diagram. 

(Cartesian) Now, suppose that we have the following commutative diagram:
\begin{equation}\label{eq: Cat1}
 \begin{tikzcd}
D \ar[bend left]{drr}{\beta} \ar[swap, bend right]{ddr}{\alpha} \ar[dotted]{dr}{\gamma} & &
\\
&A\ar[hook]{r}{i} \ar[two heads]{d}{\pi} & B \ar[two heads]{d}{\pi'}
\\
& A/C \ar[swap,hook]{r}{i'} & B/C
\end{tikzcd}
\end{equation}
We claim that $\beta(D) \subseteq i(A)$. In fact, for any $x \in D$, we have $i'\alpha(x)=\pi'\beta(x)$. Since $\pi$ is surjective, we may write $\alpha(x)=\pi(a)$ for some $a \in A$. Therefore, we have
\[
\pi'i(a)=i'\pi(a)=\pi'\beta(x). 
\]
It follows that $i(a)+C=\beta(x)+C$, in particular, $\beta(x) \in i(a)+c$ for some $c \in C$. Since $C$ is a submodule of $A$, we have that $\beta(x) \in i(A)$, and hence $\beta(D) \subseteq i(A)$. 

From the above claim, the function $\gamma: D\to A$ sending $x \in D$ to $\beta(x)$ is well-defined (here we identify $A$ with $i(A)$). It is clear that $\gamma$ is a homomorphism of $H$-modules. Furthermore, with $\gamma$, the diagram \eqref{eq: Cat1} commutes. Finally, if we have $\delta:D \to A$ making \eqref{eq: Cat1} to commutes, then we have $i(\delta(x))=\beta(x)$, and hence $\delta(x)=\gamma(x)$ for all $x \in D$, showing that \eqref{eq: Cat1} is a Cartesian diagram. 
\end{proof}

\begin{lem}\label{lemma: submodule}
Let $A$ and $B$ be $H$-modules, and $f: A \to B$ be a strict homomorphism of $H$-modules. If $C$ is a submodule of $B$, then $f^{-1}(C)$ is a submodule of $A$. 
\end{lem}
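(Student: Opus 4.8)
The plan is to verify directly that $f^{-1}(C)$ satisfies the conditions of Definition \ref{definition: submodule}, namely that it is a subset of $A$ which is itself an $H$-module under the induced hyperaddition and scalar multiplication. Since $f^{-1}(C)$ is a subset of $A$, we only need to check closure of the relevant operations and the hypergroup axioms; for the latter, associativity and the reversibility condition (4) of Definition \ref{definition: hypergroup and morphisms} are inherited from $A$ once we know that the hyperaddition of two elements of $f^{-1}(C)$, computed in $A$, lands inside $f^{-1}(C)$.

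First I would check that $0 \in f^{-1}(C)$, which holds because $f(0) = 0 \in C$. Next, for $a, b \in f^{-1}(C)$ and any $c \in a +_A b$, I would apply $f$: since $f$ is \emph{strict}, $f(a +_A b) = f(a) +_B f(b)$, so $f(c) \in f(a) + f(b) \subseteq C$ because $C$ is a submodule of $B$ closed under hyperaddition. Hence $c \in f^{-1}(C)$, so $a +_A b \subseteq f^{-1}(C)$. This is the step where strictness of $f$ is essential — with only a weak morphism one would have $f(c) \in f(a) + f(b)$ without guarantee, but actually even the weak inclusion $f(a+b) \subseteq f(a)+f(b)$ suffices here; still, strictness is the hypothesis given and makes the argument transparent. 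For scalar multiplication, given $r \in H$ and $a \in f^{-1}(C)$, we have $f(ra) = r f(a) \in C$ since $C$ is closed under the $H$-action, so $ra \in f^{-1}(C)$. Finally, for the additive inverse: given $a \in f^{-1}(C)$, the element $-a$ (taken in $A$) satisfies $f(-a) = -f(a)$, which lies in $C$ because $C$ is a submodule hence closed under negation; so $-a \in f^{-1}(C)$, and one checks $0 \in a +_A (-a)$ already inside $f^{-1}(C)$.

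With closure under hyperaddition, scalar multiplication, and negation established, the remaining axioms (associativity of hyperaddition, the neutrality of $0$, uniqueness of inverses, the reversibility axiom (4), and the module compatibility axioms (1)--(4) of Definition \ref{definition: module over a hyperring}) hold for $f^{-1}(C)$ simply by restriction from $A$ — note that uniqueness of $0$ and of $-a$ in $f^{-1}(C)$ follows from uniqueness in $A$. Therefore $f^{-1}(C)$ is an $H$-module with the induced structure, i.e., a submodule of $A$. There is no serious obstacle here; the only point requiring care is making explicit use of strictness (or at least the weak morphism inclusion) when pushing forward a hypersum through $f$, and confirming that the hyperaddition taken \emph{within} $f^{-1}(C)$ agrees with the one induced from $A$, which is immediate since we have shown $a +_A b \subseteq f^{-1}(C)$.
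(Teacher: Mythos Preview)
Your proof is correct and follows essentially the same approach as the paper's: both verify closure of $f^{-1}(C)$ under hyperaddition (using $f(c)\in f(a)+f(b)\subseteq C$) and under scalar multiplication, with the remaining hypergroup/module axioms inherited from $A$. Your version is slightly more detailed (explicitly checking $0$ and inverses) and you correctly observe that the weak inclusion $f(a+b)\subseteq f(a)+f(b)$ already suffices for the closure-under-hyperaddition step, so strictness is not actually essential here.
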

\begin{proof}
Let $D=f^{-1}(C)$. We first claim that if $x,y \in D$, then $x+y \subseteq D$. In fact, let $x \in f^{-1}(a)$ and $y \in f^{-1}(b)$ for some $a,b \in C$. Suppose that $z \in x+y$. Then we have
\[
f(z) \in f(x)+f(y) = a+b \subseteq C 
\]
showing that $z \in D$.

Next, for $r \in H$ and $x \in D$ we have $rx \in D$. In fact, we write $f(x)=a \in C$, then we have $f(rx)=rf(x)=ra \in C$. Hence $rx \in f^{-1}(C)=D$. Now, it is clear that $D$ is a submodule of $A$. 
\end{proof}

\begin{lem}\label{lemma: first iso}
Let $j':B \to B'$ be a strict surjective homomorphism of $H$-modules. Then, $B'$ is isomorphic to $B/\ker(j')$.
\end{lem}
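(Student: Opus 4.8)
The plan is to run the standard ``first isomorphism theorem'' argument: I factor $j'$ through the quotient by its kernel and check that the induced map is an isomorphism. Put $K:=\ker(j')=(j')^{-1}(\{0\})$. Since $\{0\}$ is a submodule of $B'$ and $j'$ is strict, Lemma \ref{lemma: submodule} shows that $K$ is a submodule of $B$, so the quotient $H$-module $B/K$ of Proposition \ref{proposition: quotient} is defined, together with the canonical strict surjection $\pi\colon B\to B/K$ of Corollary \ref{corollary: strict surjective quotient}. I then introduce $\varphi\colon B/K\to B'$, $\varphi([x])=j'(x)$, and show it is a strict bijective homomorphism of $H$-modules; the lemma follows because such a map is an isomorphism.

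I would first record the easy points. The map $\varphi$ is well defined, since $[x]=[y]$ means $x\in y+k$ for some $k\in K$, whence $j'(x)\in j'(y)+j'(k)=\{j'(y)\}$. It is an $H$-module homomorphism, since $\varphi(r[x])=j'(rx)=r\,\varphi([x])$ and, for $[z]\in[x]+[y]$ with $z\in x'+y'$, $[x']=[x]$, $[y']=[y]$, one has $j'(z)\in j'(x')+j'(y')=j'(x)+j'(y)$; and it is in fact strict, because strictness of $j'$ gives $j'(x)+j'(y)=j'(x+y)$, so every $c$ in $j'(x)+j'(y)$ equals $j'(z)=\varphi([z])$ for some $z\in x+y$ with $[z]\in[x]+[y]$. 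Surjectivity of $\varphi$ is immediate from that of $j'$.

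The step I expect to be the main obstacle is the injectivity of $\varphi$, i.e.\ showing that $j'(x)=j'(y)$ forces $x+K=y+K$. Given $z\in x+k$ with $k\in K$, one has $j'(z)\in j'(x)+\{0\}=\{j'(y)\}$, so $j'(z)=j'(y)$. As $j'$ is a hypergroup homomorphism, $0\in j'(y+(-y))=j'(y)+j'(-y)$, hence $0\in j'(z)+j'(-y)=j'(z+(-y))$ by strictness of $j'$, so there is $w\in z+(-y)$ with $w\in K$. Applying the reversibility axiom~(4) of Definition \ref{definition: hypergroup and morphisms} to $w\in(-y)+z$ yields $z\in w+y\subseteq y+K$ (recall $-(-y)=y$ by uniqueness of inverses); thus $x+K\subseteq y+K$, and the symmetric argument gives the reverse inclusion, so $[x]=[y]$.

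It remains to justify the final step: a strict bijective homomorphism $\varphi$ of $H$-modules is an isomorphism. Applying the set-inverse of $\varphi$ to the identities $\varphi(a+b)=\varphi(a)+\varphi(b)$ and $\varphi(ra)=r\,\varphi(a)$, and using $\varphi^{-1}(\varphi(S))=S$ for every subset $S$ (by bijectivity), one finds that $\varphi^{-1}$ preserves hyperaddition, the zero element, and the $H$-action, hence is an $H$-module homomorphism. Therefore $\varphi\colon B/\ker(j')\to B'$ is an isomorphism of $H$-modules with $j'=\varphi\circ\pi$, which is the assertion.
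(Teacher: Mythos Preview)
Your proof is correct and follows exactly the route the paper points to: the paper's own proof simply invokes ``the first isomorphism theorem of $H$-modules which holds when one has strict homomorphisms,'' while you actually write out that argument in full, using the same ingredients (Lemma~\ref{lemma: submodule}, Proposition~\ref{proposition: quotient}, Corollary~\ref{corollary: strict surjective quotient}, and the reversibility axiom for hypergroups). There is no substantive difference in approach---your version is just the detailed expansion of what the paper leaves implicit.
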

\begin{proof}
This follows from the first isomorphism theorem of $H$-modules which holds when one has strict homomorphisms. 
\end{proof}

\begin{lem}\label{lemma: pro4 hyper}
Every diagram in $\Mod_H$ of the following form
	\begin{equation}\label{eq: pro4 hyper}
		\begin{tikzcd}
			{} & B\ar["j'",d,two heads] \\
			A' \ar["i'",r,hook,swap] & B' 
		\end{tikzcd}
	\end{equation}
	with $i' \in \MM$ and $j' \in \EE$ can be completed to a
	bi-Cartesian square \eqref{eq: bi-cat} with $i \in \MM$ and
	$j \in \EE$.
\end{lem}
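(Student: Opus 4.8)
The plan is to mirror the proofs of Lemma \ref{lemma: pro4} and Lemma \ref{lemma: pro4 lattice}: take the top‑left object to be a suitable preimage, and then recognize the completed square as an instance of the bi‑Cartesian square \eqref{eq: Bicartesian} already produced in Lemma \ref{lemma: bi-cat}.

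Concretely, I would set $C := \ker(j')$ and use Lemma \ref{lemma: first iso} to assume $B' = B/C$ with $j'$ the canonical projection. Since $i'$ is a strict injective homomorphism, it is an isomorphism onto its image, and $i'(A')$ is a submodule of $B'$; by Lemma \ref{lemma: submodule} applied to the strict homomorphism $j'$, the set $A := (j')^{-1}(i'(A'))$ is a submodule of $B$, and it contains $C$ (as $0 \in i'(A')$), so $C$ is a submodule of $A$ as well. The map $\pi \colon A \to A'$, $a \mapsto (i')^{-1}(j'(a))$, is well defined because $j'(A) \subseteq i'(A')$ by construction; it is surjective, and one checks it is a strict homomorphism with $\ker\pi = A \cap \ker j' = C$. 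By Lemma \ref{lemma: first iso} this identifies $A' \cong A/C$ and $\pi$ with the canonical projection, which lies in $\EE$ by Corollary \ref{corollary: strict surjective quotient}; under these identifications the original $i'$ becomes the map $A/C \to B/C$ induced by the inclusion $A \hookrightarrow B$. The completed square is then exactly \eqref{eq: Bicartesian} (our $A$, $B$, $C$ playing the roles of $A$, $B$, $C$ there), so Lemma \ref{lemma: bi-cat} gives that it is bi‑Cartesian.

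The step I expect to be the main obstacle — and the only genuinely hyperring‑specific one — is showing that the inclusion $i \colon A \hookrightarrow B$ belongs to $\MM$, i.e.\ is a \emph{strict} homomorphism; recall from the remark after Definition \ref{definition: submodule} that inclusions of submodules are generally not strict. The key observation is that $A$ is a "saturated" preimage: for $a_1,a_2 \in A$ we have $j'(a_1),j'(a_2) \in i'(A')$, and because $i'(A')$ is closed under the hyperaddition of $B'$ in the strong sense (which follows from strictness of $i'$, since $i'(a')+i'(b') = i'(a'+b') \subseteq i'(A')$), we get $j'(a_1 +_B a_2) \subseteq j'(a_1) + j'(a_2) \subseteq i'(A')$, whence $a_1 +_B a_2 \subseteq (j')^{-1}(i'(A')) = A$. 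Therefore the hyperaddition $A$ inherits from $B$ satisfies $a_1 +_A a_2 = (a_1 +_B a_2) \cap A = a_1 +_B a_2$, which is precisely strictness of $i$. A parallel but easier computation gives strictness of $\pi$; the remaining verifications (well‑definedness and surjectivity of $\pi$, that $C$ is a submodule of $A$, and that the square indeed completes the original diagram) are routine.
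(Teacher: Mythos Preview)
Your proposal is correct and follows essentially the same approach as the paper's proof: both take $A=(j')^{-1}(i'(A'))$, verify that the inclusion $i\colon A\hookrightarrow B$ is strict by showing $a_1+_B a_2\subseteq A$ (using strictness of $i'$ and $j'$ exactly as you do), define $\pi=(i')^{-1}\circ j'\circ i$, check it is a strict surjection, use Lemma~\ref{lemma: first iso} to identify $B'\cong B/C$ and $A'\cong A/C$ with $C=\ker(j')$, and then invoke Lemma~\ref{lemma: bi-cat}. The only cosmetic difference is the order in which the first isomorphism theorem is applied; your explicit remark that inclusions of submodules need not be strict (so this step genuinely requires an argument) is a nice clarification.
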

\begin{proof}
Let $A= (j')^{-1}(i'(A'))$. Since $i'(A')$ is a submodule of $B'$, it follows from Lemma \ref{lemma: submodule}, $A$ is a submodule of $B$. We claim that the inclusion $i:A \to B$ is a strict homomorphism. For us to show this, it is enough that show that for any $a, b \in A$, we have $a+b \subseteq A$, where $a+b$ is computed in $B$.\footnote{This is equivalent to $i(a+b)=i(a)+i(b)$.} Let $c \in a+b$. Then, there exist $x,y \in A'$ such that $j'(a)=i'(x)$ and $j'(b)=i'(y)$. In particular, since $j'$ and $i'$ are strict, we have
\[
j'(c) \in j'(a)+j'(b)=i'(x)+i'(y)=i'(x+y).
\]
It follows that $j'(c) \in i'(A')$, and hence $c \in A$. This implies that $i:A \to B$ is a strict injective homomorphism, and hence $i \in \MM$.

Next, consider the function $\pi:A \to A'$ sending any $a \in A$ to $((i')^{-1}j'i)(a)$. Note that $\pi$ is well-defined since $j'i(A) \subseteq i'(A')$ by definition. We claim that $\pi$ is a strict surjective homomorphism (admissible epimorphism). First, one can easily see that $\pi$ is a homomorphism. To show that $\pi$ is strict, suppose that $a,b \in A$ and $d \in \pi(a)+\pi(b)$. Since $i',i,j'$ are strict, this implies that
\[
i'(d) \in j'i(a)+j'i(b)=j'i(a+b).
\]
In particular, $d \in (i')^{-1}j'i(a+b)=\pi(a+b)$, showing that $\pi$ is strict. Finally, it is clear from the definition that $\pi$ is surjective, showing that $\pi$ is an admissible epimorphism.

Moreover, since $j':B \to B'$ is an admissible epimorphism, from Lemma \ref{lemma: first iso}, we may assume that $j':B \to B/C$, where $C$ is a submodule of $B$. Then, by definition of $A$, we have $C\subseteq A$ (we identify $i(A)$ and $A$). In particular, we may assume that
\[
\pi:A \to A'=A/C.  
\]
Therefore, we can complete the diagram \eqref{eq: pro4 hyper} as follows:
\begin{equation}\label{eq: Bicar hyper}
	\begin{tikzcd}
		A
		\ar[r,"i",hook]
		\ar[d,"\pi",two heads,swap]
		&
		B
		\ar[d,"j'",two heads]
		\\
		A/C
		\ar[r,"i'",hook]
		&
		B/C
	\end{tikzcd}
\end{equation}
Now it follows from Lemma \ref{lemma: bi-cat} that \eqref{eq: Bicar hyper} is a bi-Cartesian diagram.
\end{proof}

\begin{lem}\label{lemma: eq5 hyper}
Every diagram in $\Mod_H$ of the following form
	\begin{equation}\label{eq: pro5 hyper}
		\begin{tikzcd}
			A \ar["j",d,two heads,swap] \ar["i",r,hook] & B\\
			A'  & {}
		\end{tikzcd}
	\end{equation}
	with $i \in \MM$ and $j \in \EE$ can be completed to a
	bi-Cartesian square \eqref{eq: bi-cat} with $i' \in \MM$ and
	$j' \in \EE$.
\end{lem}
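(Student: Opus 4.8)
The plan is to imitate the proofs of Lemma \ref{lemma: eq5} and Lemma \ref{lemma: eq5 lattice}: reduce the assertion to the bi-Cartesian statement already established in Lemma \ref{lemma: bi-cat}. Since $j \in \EE$ is a strict surjective homomorphism, Lemma \ref{lemma: first iso} lets me assume, up to isomorphism on the target, that $j$ is the canonical projection $\pi \colon A \to A/C$ with $C = \ker(j)$, a submodule of $A$. The idea is then to push $C$ into $B$ along $i$, form the quotient $B/C$, and show the resulting square is the one we want.

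Concretely, first I would observe that because $i \colon A \to B$ is \emph{strict}, for $a,b \in A$ one has $i(a +_A b) = i(a) +_B i(b)$, so (identifying $A$ with $i(A)$) the hyperaddition of $A$ agrees with the hyperaddition inherited from $B$; hence $C$, being closed under $+_A$ and under the $H$-action, is also a submodule of $B$, and the quotient $B/C$ is defined. By Corollary \ref{corollary: strict surjective quotient} the projection $j' \colon B \to B/C$ is a strict surjective homomorphism, i.e.\ $j' \in \EE$. Next, $i$ induces a map $i' \colon A/C \to B/C$, $[x] \mapsto [i(x)]$; the verification that $i'$ is well-defined, injective and strict is word-for-word the corresponding step in the proof of Lemma \ref{lemma: bi-cat}, so $i' \in \MM$. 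This completes the given span to a commutative square
\[
\begin{tikzcd}
A \ar[r,"i",hook] \ar[d,"\pi",two heads,swap] & B \ar[d,"j'",two heads] \\
A/C \ar[r,"i'",hook] & B/C
\end{tikzcd}
\]
with $i,i' \in \MM$ and $\pi,j' \in \EE$, and Lemma \ref{lemma: bi-cat} immediately gives that this square is bi-Cartesian, which is exactly what axiom (5) of Definition \ref{definition: proto_exact} demands.

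The only genuinely delicate point — the one I would write out in full rather than leave implicit — is the claim that $C$ is a submodule of $B$. A priori the hyperaddition on a submodule may be strictly smaller than that of the ambient module (see the remark following Definition \ref{definition: submodule}), so the strictness of $i$ is essential here: it is precisely what forces a subset of $A$ closed under the internal hyperaddition to be closed under the hyperaddition inherited from $B$. Once that is in place, everything else is formal and has already been carried out in Lemma \ref{lemma: bi-cat} and Corollary \ref{corollary: strict surjective quotient}, so the proof is short.
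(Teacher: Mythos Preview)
Your proposal is correct and follows essentially the same route as the paper's proof: reduce via Lemma \ref{lemma: first iso} to $j=\pi\colon A\to A/C$, form $B/C$ using Corollary \ref{corollary: strict surjective quotient}, define $i'$ as in the proof of Lemma \ref{lemma: bi-cat}, and invoke that lemma for the bi-Cartesian conclusion. Your explicit discussion of why strictness of $i$ is needed to ensure $C$ is a submodule of $B$ is a welcome clarification that the paper leaves implicit.
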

\begin{proof}
Since $j$ is an admissible epimorphism, from Lemma \ref{lemma: first iso}, we may assume that $j:A \to A'=A/C$ for some submodule $C$ of $A$. We let $j':B \to B/C$ be the map sending $x$ to $[x]$, which is an admissible epimorphism by Corollary \ref{corollary: strict surjective quotient}. Moreover, $i$ induces a natural map
\[
i': A/C \to B/C, \quad [x] \mapsto [i(x)].
\]
From the proof of Lemma \ref{lemma: bi-cat}, we know that $i'$ is well-defined and $i' \in \MM$. In particular, we can complete the diagram \eqref{eq: pro5 hyper} as follows:
\begin{equation}\label{eq: propro hyper}
	\begin{tikzcd}
		A
		\ar[r,"i",hook]
		\ar[d,"j",two heads,swap]
		&
		B
		\ar[d,"j'",two heads]
		\\
		A/C
		\ar[r,"i'",hook]
		&
		B/C
	\end{tikzcd}
\end{equation}
Now, it follows from Lemma \ref{lemma: bi-cat} that \eqref{eq: propro hyper} is a bi-Cartesian diagram. 
\end{proof}

\begin{mythm}\label{theorem: main theorem for hyperrngs}
The triple $(\Mod_H,\mathfrak{M},\mathfrak{E})$ is a proto-exact category. 
\end{mythm}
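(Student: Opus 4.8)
The plan is to verify the five axioms of Definition~\ref{definition: proto_exact} for the triple $(\Mod_H,\MM,\EE)$, drawing on the lemmas already established. First, $\Mod_H$ is pointed, the zero module serving as both initial and terminal object. Axiom~(1) is immediate: for any $H$-module $A$ the map $0\to A$ is injective and strict (the only hypersum in $\{0\}$ is $0+0=\{0\}$, so the defining equality for strictness holds vacuously), hence $0\to A\in\MM$, and dually $A\to 0\in\EE$. Axiom~(2) was already recorded immediately after Definition~\ref{definition: proto-exact hyperring}: $\MM$ and $\EE$ contain all isomorphisms, and a composite of strict homomorphisms is strict (if $f,g$ are strict then $g(f(a+b))=g(f(a)+f(b))=g(f(a))+g(f(b))$), while injectivity and surjectivity are obviously stable under composition.

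Axioms~(4) and~(5) --- completion of a cospan \eqref{eq: pullback1} and of a span \eqref{eq: com2} to a bi-Cartesian square --- are precisely Lemmas~\ref{lemma: pro4 hyper} and~\ref{lemma: eq5 hyper}. These in turn rest on Lemma~\ref{lemma: bi-cat} (that the square \eqref{eq: Bicartesian} attached to a chain of submodules $C\subseteq A\subseteq B$ is bi-Cartesian), on Lemma~\ref{lemma: submodule} (strict preimages of submodules are submodules), and on Lemma~\ref{lemma: first iso} (the first isomorphism theorem for strict surjections). So for the theorem proper nothing new is needed here beyond citing these results.

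The one axiom requiring a short argument is~(3): a commutative square \eqref{eq: bi-cat} with $i,i'\in\MM$ and $j,j'\in\EE$ is Cartesian if and only if it is co-Cartesian. I would deduce this from~(4) and~(5) using the universal-property uniqueness of pullbacks and pushouts. Suppose the square is Cartesian, i.e.\ $(A,i,j)$ is the pullback of the cospan $A'\xrightarrow{i'}B'\xleftarrow{j'}B$. By Lemma~\ref{lemma: pro4 hyper} this cospan has a bi-Cartesian completion, which is in particular a pullback of the same cospan; by uniqueness of pullbacks its upper-left corner is canonically isomorphic to $A$ compatibly with all structure maps, so the original square \emph{is} (up to that isomorphism) the bi-Cartesian one, hence co-Cartesian. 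The converse is the mirror image: if the square is co-Cartesian, complete the span $A'\xleftarrow{j}A\xrightarrow{i}B$ to a bi-Cartesian square via Lemma~\ref{lemma: eq5 hyper} and invoke uniqueness of pushouts. Assembling --- pointedness and axiom~(1) by inspection, axiom~(2) as recorded above, axioms~(4) and~(5) by Lemmas~\ref{lemma: pro4 hyper} and~\ref{lemma: eq5 hyper}, axiom~(3) by the uniqueness argument --- completes the proof.

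As for the main obstacle: at the level of the theorem there is essentially none, since all the genuine content sits in Lemmas~\ref{lemma: bi-cat}--\ref{lemma: eq5 hyper}. If one insists on naming a subtlety, it is internal to those lemmas --- checking that the induced map $A/C\to B/C$ is a \emph{strict} injection, and that the mediating morphism $\gamma$ produced in each pullback/pushout verification is a genuine $H$-module homomorphism respecting the multi-valued addition in the required weak sense --- but taking them as given, the present statement is a routine assembly mirroring the proofs of Theorems~\ref{theorem: main theorem for modules} and~\ref{theorem: main theorem for lattices}.
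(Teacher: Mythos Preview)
Your proposal is correct and follows essentially the same approach as the paper: verify pointedness, note axiom~(2) as recorded after Definition~\ref{definition: proto-exact hyperring}, invoke Lemmas~\ref{lemma: pro4 hyper} and~\ref{lemma: eq5 hyper} for axioms~(4) and~(5), and deduce axiom~(3) from these via uniqueness of pullbacks and pushouts. Your write-up is simply more explicit about the uniqueness argument for~(3) than the paper's one-line remark.
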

\begin{proof}
	$\Mod_H$ is clearly pointed. The axiom $(2)$ in Definition \ref{definition: proto_exact} is clear. The axioms $(4)$ and $(5)$ in Definition \ref{definition: proto_exact} directly follow from Lemmas \ref{lemma: pro4 hyper} and \ref{lemma: eq5 hyper}. Finally, the axiom $(3)$ is a direct consequence of $(4)$ and $(5)$ along with the uniqueness of pullbacks and pushouts. 
\end{proof}

\subsection{Finite $\mathbb{K}$-modules and incidence geometries}

Let $\mathbb{K}$ be the Krasner hyperfield. One can easily check that a hypergroup $E$ is an $\mathbb{K}$-module if and only if $x+x=\{0,x\}$ for all $x \neq 0$.  In \cite{con3}, Connes and Consani proved that each finite $\mathbb{K}$-module $E$ uniquely determines a projective geometry $\mathcal{P}_E$ whose set of points is $E\setminus \{0_E\}$ and the line $\ell(x,y)$ passing through two distinct points $x,y \in \mathcal{P}$ is defined by using the hyperaddition of $E$. To be a bit more precise, the points on the line $\ell(x,y)$ is given as follows:
\[
\ell(x,y)=(x+y) \cup \{x,y\}. 
\]
In this case, any line in $\mathcal{P}_E$ contains at least four points. Conversely, any projective geometry such that each line contains at least four points uniquely arises in this way.

Let $\Mod^{\textrm{fin}}_{\mathbb{K}}$ be the category of finite $\mathbb{K}$-modules. Let $E$ be a finite $\mathbb{K}$-module, and $\mathcal{P}_E$ the corresponding projective geometry. Let $\mathcal{L}_E$ be the lattice of submodules $E$. Then one can easily observe the following:
\begin{enumerate}
\item 
The atoms of $\mathcal{L}_E$ have the form $\{0,x\}$ for $x \neq 0$, and are in one-to-one correspondence with the points of $\mathcal{P}_E$. 
\item 
A submodule of height 2\footnote{By this we mean minimal among those which properly contain an atom} must contain $\{0,x\}$ for some $x$ and must contain $y$ for some nonzero $y \neq x$. Thus such a submodule must contain $x+y$, so contains all points of the line passing through $x$ and $y$. Since such a line (together with $0)$ is a submodule, a submodule of height 2 is a line in $\mathcal{P}_E$. Moreover, since a line cannot properly contain another line, all lines in $\mathcal{P}_E$ have height 2. 
\item 
Consider a short exact sequence\footnote{Recall that by a short exact sequence, we mean a commutative diagram of the form \eqref{eq: ses proto-exact} with the proto-exact structure of $\Mod^{\textrm{fin}}_{\mathbb{K}}$ induced from $\Mod_{\mathbb{K}}$.} in $\Mod^{\textrm{fin}}_{\mathbb{K}}$ as follows:
\begin{equation}\label{eq: ses}
\ses{E'}{E}{E''}.
\end{equation}
Recall that the submodules of $E/E' (\simeq E'')$ are in one-to-one correspondence with those of $E$ which contain $E'$ as in the classical case. In particular, a point in $E''$ is a submodule (or projective subspace) which is minimal among those which properly contain $E'$, while a line is minimal among submodules which are not points of $E''$ but properly contain $E'$, and so on. For example, when $E'=\{0,x\}$ (or a point in the corresponding projective geometry), the projective geometry $\mathcal{P}_{E''}$ is the quotient geometry as in \cite[Section 1.4]{beutelspacher1998projective}. 
\item 
With the short exact sequence \ref{eq: ses}, in the special case where $E$ is the projective space associated to a vector space $V$ over some field, and $E'$ corresponds to a subspace $W$ of $V$, $E''$ corresponds to the projectivization of $V/W$. 
\end{enumerate}

From the above, one may be tempted to use this correspondence to define the Hall algebra of $\Mod^{\textrm{fin}}_{\mathbb{K}}$ as it appears to be finitary. However, the following example shows that unfortunately $\Mod^{\textrm{fin}}_{\mathbb{K}}$ is not finitary. 

\begin{myeg}\label{eg: not finitary}
Let $n\geq 3$, and $E_n=\{0,a_1,\dots,a_n\}$. We impose hyperaddition on $E$ as follows:
\[
a_i + a_j = \begin{cases}
\{0, a_i\} \textrm{ if } i=j, \\
 \{a_k \mid k\neq j \textrm{ and } k \neq i \}  \textrm{ if } i \neq j. 	
\end{cases}
\]	
One can think of $\mathcal{P}_{E_n}$ as a projective space with only a single line but many points.  If we consider the quotient projective space by $a_1$ as in the above observation (3), the points of the quotient are lines through $a_1$, of which there are only one. In particular, there is an exact sequence $\ses{\mathbb{K}}{E_n}{\mathbb{K}}$ for each $n \geq 3$. This shows that $\Mod^{\textrm{fin}}_{\mathbb{K}}$ is not finitary since if $n \neq m$, then $E_n$ are $E_m$ are not isomorphic. In fact, one may apply a similar idea to show that $\Mod^{\textrm{fin}}_{\mathbb{K}}$ is not closed under extensions. 
\end{myeg}


\subsection{Further directions}

Let $H_\mathbb{K}$ be the vector space over complex numbers spanned by the set of isomorphism class in $\Mod^{\textrm{fin}}_{\mathbb{K}}$. Let $E_1,E_2$ be finite $\mathbb{K}$-modules. Suppose that in $H_\mathbb{K}$ we have
\begin{equation}\label{eq: not fin}
E_1\cdot E_2:= \sum_{E \in \textrm{Iso}(\Mod^{\textrm{fin}}_{\mathbb{K}})} \textbf{a}_{E_1,E_2}^EE, 
\end{equation}
where 
\begin{equation}
\textbf{a}_{E_1,E_2}^E=\#\{L \subseteq E \mid L \simeq E_2 \textrm{ and } E/L \simeq E_1\}.
\end{equation}
From the above discussion we see that the number $\textbf{a}_{E_1,E_2}^E$ counts the number of different embeddings of projective geometries $L$, which is isomorphic to $\mathcal{P}_{E_2}$, into $\mathcal{P}_E$ such that the ``quotient geometry'' $\mathcal{P}_{E/L}$ is isomorphic to $E_1$. Here is a more explicit example. 
 
 \begin{myeg}
Let $E_2=\mathbb{K}$ (as a hypergroup) and Let $E_1=\mathbb{F}_{5^2}/\mathbb{F}_5^\times$. Then $\mathcal{P}_{E_2}$ is just a point and $\mathcal{P}_{E_1}=\mathbb{P}^1_{\mathbb{F}_5}$, the projective line over $\mathbb{F}_5$. Hence, when $E=\mathbb{P}^2_{\mathbb{F}_5}$, the structure constant $\textbf{a}_{E_1,E_2}^E$ is the number of flags $(a,\ell)$ in $\mathbb{P}^2_{\mathbb{F}_5}$, where $a$ is a point and $\ell$ is a line containing $a$.  
 \end{myeg}

\begin{rmk}\label{rmk: functorial}
In fact, the aforementioned correspondence between finite $\mathbb{K}$-module and projective geometries are functorial, where the category of projective geometries is defined as in \cite{faure1994morphisms}. Moreover, when $H=\mathbb{S}$, the sign hyperfield, finite $\mathbb{S}$-modules correspond to \emph{spherical geometries} and the above discussions are carried over to this case. 
\end{rmk}

As Example \ref{eg: not finitary} shows, the category $\Mod^{\textrm{fin}}_{\mathbb{K}}$ is not finitary, in particular, the multiplication \eqref{eq: not fin} does not have to be a finite sum in general. This leads us to the following questions.

\begin{question}
\begin{enumerate}
	\item 
Can one find a proto-exact subcategory $\mathcal{C}$ of $\Mod^{\textrm{fin}}_{\mathbb{K}}$ which is finitary?
\item 
If so, can one describe the subcategory of projective geometries through the categorical correspondence mentioned in Remark \ref{rmk: functorial}. In this case, do structure constants for $H_\mathcal{C}$ encode some interesting combinatorial identities for projective geometries? 
\end{enumerate}
 
\end{question}



\bibliography{hyperfield}\bibliographystyle{alpha}

\end{document}